\documentclass[12pt,reqno]{amsart}
\usepackage{hyperref}
\usepackage{geometry}\geometry{margin=1in}
\usepackage{enumitem}
\usepackage{amsmath}
\usepackage{amssymb}
\usepackage{amsthm}
\usepackage{amsrefs}
\usepackage{amsfonts}
\usepackage{xcolor}
\usepackage{mathtools}
\usepackage{stackengine}
\usepackage{verbatim}

\makeatletter
\@namedef{subjclassname@2020}{%
  \textup{2020} MSC}
\makeatother

\hypersetup{
	colorlinks   = true, 
	urlcolor     = blue, 
	linkcolor    = purple, 
	citecolor   = blue 
}

\def\XXint#1#2#3{{\setbox0=\hbox{$#1{#2#3}{\int}$ }
\vcenter{\hbox{$#2#3$ }}\kern-.6\wd0}}

\makeatletter
\newcommand*{\rom}[1]{\expandafter\@slowromancap\romannumeral #1@}

\newcommand{\SL}{\text{SL}}
\newcommand{\GL}{\text{GL}}
\newcommand{\M}{\text{M}}

\newcommand{\X}{\mathcal{X}}
\newcommand{\Y}{\mathcal{Y}}
\newcommand{\R}{\mathbb{R}}

\newcommand{\T}{\mathbb{T}}

\newcommand{\e}{\epsilon}
\newcommand{\D}{\mathcal{D}}

\newcommand{\Oo}{\mathcal{O}}

\newcommand{\U}{\mathcal{U}}
\newcommand{\Z}{\mathbb{Z}}
\newcommand{\K}{\mathcal{K}}

\newcommand{\C}{\mathcal{C}}
\newcommand{\Cc}{\mathcal{C}_{c}^{\infty}}
\newcommand{\V}{\mathbf{v}}
\newcommand{\N}{\mathbb{N}}

\newcommand{\bthm}{\begin{thm}}
\newcommand{\ethm}{\end{thm}}
\newcommand{\bproof}{\begin{proof}}
\newcommand{\eproof}{\end{proof}}
\newcommand{\blem}{\begin{lem}}
\newcommand{\elem}{\end{lem}}
\newcommand{\brem}{\begin{rem}}
\newcommand{\erem}{\end{rem}}
\newcommand{\eeqn}{\end{equation}}
\newcommand{\eeqnn}{\end{equation*}}
\newcommand{\beqn}{\begin{equation}}
\newcommand{\beqnn}{\begin{equation*}}
\newcommand{\eprop}{\end{prop}}
\newcommand{\eexm}{\end{exm}}
\newcommand{\enexm}{\end{nexm}}
\newcommand{\ecor}{\end{cor}}
\newcommand{\bcor}{\begin{cor}}
\newcommand{\bexm}{\begin{exm}}
\newcommand{\bnexm}{\begin{nexm}}
\newcommand{\bprop}{\begin{prop}}
\newcommand{\bdefn}{\begin{defn}}
\newcommand{\edefn}{\end{defn}}
\newcommand{\benum}{\begin{enumerate}}
\newcommand{\eenum}{\end{enumerate}}

\title{Two Central limit theorems in Diophantine approximation}

\setcounter{tocdepth}{1}
\begin{document}
\theoremstyle{plain}
\newtheorem{thm}{Theorem}[section]
\newtheorem{lem}[thm]{Lemma}
\newtheorem{prop}[thm]{Proposition}
\newtheorem{cor}[thm]{Corollary}

\theoremstyle{definition}
\newtheorem{defn}[thm]{Definition}
\newtheorem{exm}[thm]{Example}
\newtheorem{nexm}[thm]{Non Example}
\newtheorem{prob}[thm]{Problem}

\theoremstyle{remark}
\newtheorem{rem}[thm]{Remark}

\author{Gaurav Aggarwal}
\address{\textbf{Gaurav Aggarwal} \\
School of Mathematics,
Tata Institute of Fundamental Research, Mumbai, India 400005}
\email{gaurav@math.tifr.res.in}

\author{Anish Ghosh}
\address{\textbf{Anish Ghosh} \\
School of Mathematics,
Tata Institute of Fundamental Research, Mumbai, India 400005}
\email{ghosh@math.tifr.res.in}

\date{}

\thanks{G.\ A.\ was an Infosys fellow during the writing of the paper.  A.\ G.\ gratefully acknowledges support from a grant from the Infosys foundation to the Infosys Chandrasekharan Random Geometry Centre. G. \ A.\ and  A.\ G.\ gratefully acknowledge a grant from the Department of Atomic Energy, Government of India, under project $12-R\&D-TFR-5.01-0500$. }

\subjclass[2020]{Primary: 11K60; Secondary: 60F05, 37A17}
\keywords{Diophantine approximation, Central limit theorems, flows on homogeneous spaces}


\begin{abstract} 
We prove central limit theorems for Diophantine approximations with congruence conditions and for inhomogeneous Diophantine approximations following the approach of Bj\"{o}rklund and Gorodnik. The main tools are the cumulant method and dynamics on homogeneous spaces.

\end{abstract}

\maketitle

\tableofcontents

\section{Introduction}
In this paper, we prove central limit theorems in Diophantine approximation, specifically for \emph{inhomogeneous} Diophantine approximation and for Diophantine approximation with \emph{congruence conditions}. Beginning with Schmidt's landmark paper \cite{S60} where he proved a quantitative analogue of Khintchine's theorem, there has been considerable activity in quantitative metric Diophantine approximation. These include pointwise results, namely variations of Schmidt's theorem in various settings, as well as more probabilistic ones, such as laws of large numbers and central limit theorems.  Recently, Bj\"{o}rklund and Gorodnik \cite{BG} used the method of cumulants and homogeneous dynamics, especially multiple mixing estimates \cite{BG3} for flows on homogeneous spaces to prove very general central limit theorems for Diophantine approximations. They treat systems of linear forms and also allow \emph{weights}. See also \cite{DFV} for an alternative approach to central limit theorems in Diophantine approximation. 

In this paper we study two variations of classical Diophantine approximation. Firstly, we consider the inhomogeneous setting; namely we consider small values of systems of linear forms accompanied by shifts, see (\ref{a1}). We next turn our attention to the problem of studying Diophantine inequalities where the rational approximations are constrained to satisfy congruence conditions see (\ref{c1}). We establish central limit theorems in both cases.

\section{Main results}
The first main result of our paper concerns inhomogeneous Diophantinbe approximation, specifically, weighted Diophantine approximation for systems of affine forms. Inhomogeneous approximation is an old and established line of study in Diophantine approximation. For qualitative results there are powerful transference theorems, see \cite{BL, BV, CGGMS} and the references therein, which often allow one to deduce inhomogeneous results from their homogeneous counterparts. This is not the case, as far as we are aware, for quantitative results, such as those in this paper.

Fix $m\geq 2$, $n \geq 1 $. Let $u = (u_{ij})\in M_{m \times n}(\mathbb{R})$ and $v= (v_j) \in M_{m\times 1}(\mathbb{R})$, and consider the family $(L_{u,v}^i)$ of affine linear forms defined by
\begin{equation*}
    L_{u,v}^i(x_1,x_2, \ldots, x_n) =\sum_{j=1}^n u_{ij}x_j +v_i,  \ \ \ i=1,\ldots,m
\end{equation*}
Fix a norm $\|.\|$ on $\R^n$. Fix $\vartheta_1, \vartheta_2, \ldots, \vartheta_m > 0$ and $w_1, w_2,\ldots, w_m > 0$ which satisfy 
\begin{equation*}
    w_1 + w_2+\ldots+w_m = n
\end{equation*}
and consider the Diophantine inequality 
\begin{equation}
\label{a1}
    |p_i + L_{u,v}^i(q_1, q_2, \ldots, q_n)| \leq \vartheta_i \|\bar{q}\|^{-w_i}, \ \ \ i=1,\ldots, m,
\end{equation}
with $(\bar{p},\bar{q})$= $(p_1,p_2,\ldots,p_m,q_1,q_2,\ldots,q_n) \in \mathbb{Z}^m \times (\mathbb{Z}^n \backslash \{0\})$. This represents a system of $m$ inhomogeneous linear forms in $n$ variables. Traditionally, the case $m=n=1$ has been the most well developed. In \cite{Szusz58}, Sz\"{u}sz proved a version of Khintchine's theorem in this setting.  

The number of solutions of this system with the norm of the ``denominator" $\bar{q}$ bounded by $T$ is defined by
\begin{equation}
    \label{defdel1}
    \Delta_T(u,v)= |\{(\bar{p},\bar{q}) \in \mathbb{Z}^m \times \mathbb{Z}^n: 0< \|\bar{q}\|<T \ \text{and \eqref{a1} holds} \}|.
\end{equation}

 An almost sure asymptotic formula for inhomogeneous simultaneous Diophantine approximation, i.e. $n = 1$ can be found in \cite{Harman}. In all the above results, the weights are taken to be equal.

Our first main result in this paper is a weighted central limit theorem in inhomogeneous Diophantine approximation.

\bthm \label{CLT2}
If $m \geq 2$, then for every $\xi \in \R$,
\begin{equation}
\left| \left\{ u \in M_{m \times n}([0,1]), v \in M_{m \times 1}([0,1]): \frac{\Delta_T(u,v)-C_{m,n} \log T}{(\log T)^{1/2}} < \xi \right\} \right| \longrightarrow Norm_{\sigma_{m,n}}(\xi)
\end{equation}
as $T \longrightarrow \infty$, where $$Norm_{\sigma}(\xi) = (2\pi \sigma )^{-1/2} \int_{- \infty}^{\xi} e^{-s^2/ (2 \sigma)} \, ds$$ denotes the normal distribution with variance $\sigma$ and
\begin{equation*}
    C_{m,n}=\sigma_{m,n}^2= 2^m \vartheta_1\vartheta_2 \ldots \vartheta_m \omega_n \text{ with } \omega_n := \int_{S^{n-1}}  d\bar{z},
\end{equation*}
where $S^{n-1}= \{ \bar{z}: \|\bar{z}\| = 1\}$. 
\ethm

\begin{rem}
Note that a similar Theorem still holds if we add the conditions in the definition of $\Delta_T(u,v)$, such as $$p_i + L_{u,v}^i(q_1, q_2, \ldots, q_n) \geq 0 \text{ or } < 0$$ and that $$q_j >0 \text{ or } \leq 0.$$ In general, let $S$ denote a non-empty union of orthants in $\R^{m+n}$ (all of whose adjacent sides may or may not be included), and define
\begin{equation*}
     \Delta_T^S(u,v)= |\{(\bar{p},\bar{q}) \in \mathbb{Z}^m \times \mathbb{Z}^n: 0< \|\bar{q}\|<T \ \text{and \eqref{a1} holds, } (\bar{p}+ u.\bar{q} + v, \bar{q}  ) \in S \}|,
\end{equation*}
then the proof of Theorem \ref{CLT2} still works to give a similar theorem with a different mean and variance (which also vary, according to the sides of $S$, which are included in $S$). 
\end{rem}

This Theorem is proved for equal weights by Dolgopyat, Fayad and Vinogradov in \cite{DFV}, and it is plausible that the approach in that paper also suffices to prove the weighted version above.

Our second result concerns Diophantine approximation with congruence conditions. For $u \in M_{m \times n}(\mathbb{R})$, consider the family $(L_{u}^i)$ of linear forms defined by
\begin{equation*}
    L_{u}^i(x_1,x_2, \ldots, x_n) =\sum_{j=1}^n u_{ij}x_j, \ \ \  i=1,\ldots,m
\end{equation*}
Fix a norm $\|.\|$ on $\R^n$ and a natural number $N \geq 1$ and vector $v \in \Z^{m+n}$ such that $\gcd(v,N)=1$. Fix $\vartheta_1, \vartheta_2, \ldots, \vartheta_m > 0$ and $w_1, w_2,\ldots, w_m > 0$ which satisfy 
\begin{equation*}
    w_1 + w_2+\ldots+w_m = n
\end{equation*}
and consider the Diophantine inequality
\begin{equation}
\label{c1}
    |p_i + L_{u}^i(q_1, q_2, \ldots, q_n)| \leq \vartheta_i \|\bar{q}\|^{-w_i}, \ \ (\bar{p},\bar{q})= v \ (mod \ N) \ \ \ i=1,\ldots, m,
\end{equation}
with $(\bar{p},\bar{q})$= $(p_1,p_2,\ldots,p_m,q_1,q_2,\ldots,q_n) \in \mathbb{Z}^m \times (\mathbb{Z}^n \backslash \{0\})$. The number of solutions of this system with the norm of the ``denominator" $\bar{q}$ bounded by $T$ is defined by
\begin{equation}
    \Delta_{T,v,N}(u)= |\{(\bar{p},\bar{q}) \in \mathbb{Z}^m \times \mathbb{Z}^n: 0< \|\bar{q}\|<T \ \text{and \eqref{c1} holds} \}|.
\end{equation}

This problem also has an old vintage. A Khintchine type theorem for the case $m=n=1$ was proved by Hartman and Sz\"{u}sz in \cite{HS}. Subsequently, Szusz obtained a quantitative strengthening in \cite{Szusz}, again for $m = n = 1$.  More recently, and in connection to systems of linear forms, a Khintchine-Groshev type Theorem was proved in \cite{NRS}. In his Mathscinet review of \cite{Szusz}, Cassels remarked that it would be interesting to obtain a simultaneous version of Szusz's quantitative result. This was achieved in \cite{AGY} where an almost sure pointwise bound on $\Delta_{T,v,N}(u)$ was obtained. The latter two papers, i.e. \cite{NRS, AGY} use homogeneous dynamics. 

Our second main result in this paper is a weighted central limit theorem for Diophantine approximation with congruence constraints.

\bthm \label{CLT3}
If $m \geq 2$, then for every $\xi \in \R$,
\begin{equation}
\left| \left\{ u \in M_{m \times n}([0,1]): \frac{\Delta_{T,v,N}(u)-C_{m,n,N} \log T}{(\log T)^{1/2}} < \xi \right\} \right| \longrightarrow Norm_{\sigma_{m,n,N}}(\xi)
\end{equation}
as $T \longrightarrow \infty$, where
\begin{equation*}
    C_{m,n,N}=C_{m,n}/N^{m+n}
\end{equation*}
and
\begin{equation*}
\sigma_{m,n,N} = \frac{2^{m+1}}{N^{n+m}}\left( \prod_{i=1}^{m} \vartheta_i\right) \omega_n \left(  1 + \frac{2}{\zeta_N(m+n)} \sum_{r \in S_N} \sum_{q \geq 1} \frac{q-1}{(Nq+r)^{m+n}}\right),
\end{equation*}
where  $S_N=\{ i \in \Z : 0 \leq i < N, \gcd(i,N)= 1\}$ and $\zeta_N(r):= \sum_{\substack{ k \geq 1 \\ \gcd(k,N)=1}} k^{-r}$.
\ethm

\begin{rem}
    Note that for $N=1$, $S_N=\{0\}$ and $\zeta_N$ is the usual Riemann's $\zeta$-function and the result, in this case, is proved in \cite{BG}. Also, the case $\gcd(v,N) \neq 1$ is easy to derive from Theorem \ref{CLT3}. 
\end{rem}

\begin{rem}
As in Theorem \ref{CLT2}, a similar Theorem still holds for
\begin{equation*}
    \Delta_{T,v,N}^S(u)= |\{(\bar{p},\bar{q}) \in \mathbb{Z}^m \times \mathbb{Z}^n: 0< \|\bar{q}\|<T \ \text{and \eqref{c1} holds}, (\bar{p}+ u.\bar{q}, \bar{q}  ) \in S  \}|,
\end{equation*}
where $S$ denotes a non-empty union of orthants in $\R^{m+n}$ (all of whose adjacent sides may or may not be included). Here again, the proof of Theorem \ref{CLT3} still works to give a similar theorem with a different mean and variance (which also vary, according to the sides of $S$, which are included in $S$ ).  
\end{rem}

\begin{rem}
    The inhomogeneous Diophantine approximation studied in this paper is sometimes called ``doubly inhomogeneous" in the number theory literature. The reason is that both parameters, the homogeneous form as well as the shift, are allowed to vary. There is also ``singly inhomogeneous" approximation, where one of the parameters, either the homogeneous form or the shift, is fixed. This latter study is related to Diophantine approximation with congruence constraints; namely if the shift ($v_i$ in our notation) in the inhomogeneous form is a fixed rational vector, then the inhomogeneous problem reduces to the congruence one. This observation is used in the context of quadratic forms in \cite{GKY}.
\end{rem}

\subsection{Acknowledgements} Part of this work was done at the ICTS, Bengaluru during the program ``Ergodic Theory and Dynamical Systems". We thank them for excellent working conditions. A.\ G.\ thanks Barak Weiss for many helpful and enjoyable conversations about Diophantine approximation with congruence constraints.

\section{Proof of the inhomogeneous case}  \label{Affine}

\subsection{Introduction} \label{Aff Intro}  \hfill \\

We will follow the approach in the paper  \cite{BG} to prove our main theorems. For this section, we fix $m,n \in \N$ with $m \geq 2$ and $\|.\|$, a norm on $\R^n$. We also fix $\vartheta_1, \ldots, \vartheta_m > 0$ and $w_1, \ldots, w_m >0 $  satisfying $$w_1  + \ldots + w_m = n.$$ 

Let $\X$ denote the space of affine unimodular lattices in $\R^{m+n}$. Consider the group $G:= SL_{m+n}(\R) \ltimes \R^{m+n}$,the group of all invertible affine transformations of $\R^{m+n}.$ The group structure on $G$ is given as $$(A,v).(B,w)=(AB, Aw+v) \text{   for all   } (A,v), (B,w) \in G.$$  We have a natural action of $G$ on $\X$, given by 
 \begin{equation}
\label{action2}
    ((A,v), \Lambda) \mapsto A.\Lambda +v  =\{A.w +v : w \in \Lambda\},
\end{equation}
for all affine unimodular lattice $\Lambda \in \X$.
 This gives us a surjection from $G $ onto $\X$, given by $(A,v) \mapsto A \Z^{m+n} + v$. Thus, we can identify $\X$ as a homogeneous space of the group $G$. More specifically, $\X \simeq G/\Gamma $, where $\Gamma := SL_{m+n}(\Z) \ltimes \Z^{m+n} $ is the kernel of the above surjective map. Let $\mu_{\X}$ denote the G-invariant probability measure on $\X$ and $m_G$ denote the bi-invariant Haar measure on G, normalized so that the fundamental region for the action of $\Gamma $ on $G$ has measure 1.  
 Denote by $U$ the subgroup
\begin{equation}
    U:= \left\{ \begin{pmatrix} I_m & u \\ 0 & I_{n+1} \end{pmatrix} : u \in M_{m \times (n+1)}(\R) \right\} < G.
\end{equation}
Let $\Y := U\Z^{m+n} = \{ u. \Z^{m+n}: u \in U\} \subset \X$. Geometrically, $\Y$ can be viewed a $m(n+1)$-dimensional torus embedded in the space of lattices $\X$. We denote by $\mu_{\Y}$ the probability measure on $\Y$ induced by Lebesgue measure on $M_{m \times (n+1)}([0,1])$. Note that elements of $\Y$ look like
 \begin{equation}
 \label{defLambda1}
     \Lambda_{u,v}= \left\{ \left( p_1+ \sum_{j=1}^n u_{1j}q_j + v_1,\ldots, p_m +\sum_{j=1}^{n} u_{mj}q_j + v_m, \bar{q}\right) : (\bar{p}, \bar{q}) \in \Z^m \times \Z^n \right\} 
 \end{equation}
 where $u \in M_{m \times n}([0,1])$, $v \in M_{m \times 1}([0,1])$.  \par

For sake of simplicity, we denote $d:= m+n$. Note that the group $G$ can be considered as subgroup of $G_{d+1}= \SL_{d+1}(\R)$, via the map $H: G \rightarrow G_{d+1}$ given by
\begin{equation}
\label{eq:aff def H}
    (A,u) \rightarrow \begin{pmatrix}
        A & u \\ 0 & 1
    \end{pmatrix} \text{    for } A \in SL_{m+n}(\R),\  u \in \R^{m+n}.
\end{equation}
Under this identification, it is clear that $\Gamma= G \cap \Gamma_{d+1}$, where $\Gamma_{d+1}= \SL_{d+1}(\Z)$. Thus $H$ induces an injective map $h: \X \rightarrow \X_{d+1}$, where $\X_{d+1} \simeq  G_{d+1}/ \Gamma_{d+1}$, denotes the space of all unimodular lattices in $\R^{d+1}.$ \\
Also, we have a surjective map  from $G$ onto $G_d= \SL_d(\R)$ given by $\Tilde{\pi}(A,v)=A$. It is clear that $\Tilde{\pi}(\Gamma)= \SL_d(\Z)$. Hence, $\Tilde{\pi}$ induces a surjective map $\pi: \X \rightarrow \X_d= \SL_d(\R)/ \SL_d(\Z). $ Note that the map $\pi: (\X, \mu_{\X}) \rightarrow (\X_d, \mu_{\X_d})$ is a measure preserving map, where $\mu_{\X_d}$ is the unique $G_d$ invariant probability measure on $\X_d$. To see this, note that the map $\pi$ is continuous, hence a measurable map. So we may consider the pushforward of the measure $\mu_{\X}$ under $\pi$, denoted by $\pi_*(\mu_{\X})$. Clearly, the $G$-invariance of $\mu_{\X}$  descends to give $G_d$-invariance of $\pi_*(\mu_{\X})$. Thus, $\pi_*(\mu_{\X})$ is a $G_d$-invariant probability measure on $\X_d$, hence must agree with $\mu_{\X_d}$ using the uniqueness of latter. 
We also define $$U_d = \left\{ \begin{pmatrix}
            I_m & u \\ 0 & I_n
\end{pmatrix}: u \in \M_{m \times n}(\R) \right\}. $$ Let $\Y_d$ be image of $U_d$ under the projection map $SL_{d}(\R) \rightarrow \X_d$. Let $\mu_{\Y_d}$ be the probability measure on $\Y_d$ induced by Lebesgue measure on $\M_{m \times n}([0,1])$. Then it is clear that $\pi : \X \rightarrow \X_d$ maps $\Y$ onto $\Y_d$. Moreover, the restriction map $\pi_{|_{\Y}} : (\Y, \mu_{\Y}) \rightarrow (\Y_d,\mu_{\Y_d})$ is a measure preserving map.

\subsection{Mixing of the $a^s\Y$ action on $\X$} \label{Aff Mix Property} \vspace{0.3in} \hfill \\

In order to state the mixing result for the $a^s\Y$ action on $\X$, we will need some notation. We will follow \cite{BG3} in this regard. Fix positive weights $w_1, w_2, \ldots, w_{m+n}$, satisfying $$\sum_{i=1}^{m} w_i = \sum_{i=m+1}^{m+n} w_i$$ and denote by $(a_t)$ the corresponding  one parameter semi-subgroup of $G$ given by $a_t=(a_t', 0)$, where $$a_t' ;= diag(e^{w_1t}, \ldots, e^{w_{m}t}, e^{-w_{m+1}t}, \ldots, e^{-w_{m+n}t}) \ \text{, } t>0. $$

We set $b_t=(b_t',0)$ to be the `equal weight' one parameter subgroup
\begin{equation}
\label{Hom b_t}
    b_t' := diag(e^{t/m}, \ldots , e^{t/m}, e^{-t/n}, \ldots, e^{-t/n}), \ t>0.
\end{equation}
This flow coincides with $(a_{t})$ with the special choice of exponents $$w_1= \ldots= w_m = \frac{1}{m}, \ \ \ w_{m+1}= \ldots= w_{m+n}= \frac{1}{n}.$$

Every $Y \in Lie(G)$ defines a first order differential operator $\mathcal{D}_Y$ on  $\C_c^\infty(\X)$ by \begin{equation*}
     \D_Y(\phi)(x) := \frac{d}{dt} \phi(exp(tY)x)|_{t=0}.
 \end{equation*}
Fixing an ordered basis ${Y_1,Y_2,\ldots, Y_r}$ of $Lie(G)$, every monomial $Z= Y_1^{l_1}\ldots Y_r^{l_r} $ defines a differential operator by
 \begin{equation}
 \label{diff hom}
     \D_Z := \D_{Y_1}^{l_1} \ldots  \D_{Y_r}^{l_r}
 \end{equation}
 of \textit{degree} $deg(Z)= l_1+ \ldots+l_r$. For $k \geq 1$ and $\phi \in \C_c^{\infty} (\X)$, define the norms 
 \begin{equation}
 \label{eq: def Lk}
    \|\phi\|_{L_k^2(\X)} := \left( \sum_{deg(Z) \leq k} \int_{\X} |(\D_Z \phi)(x)|^2 \, d\mu_{\X}(x) \right)^{1/2}
 \end{equation}
 and
 \begin{equation}
 \label{eq: def Ck}
     \|\phi\|_{C^k} := \sum_{deg(Z) \leq k} \|(\D_Z \phi)(x)\|_{\infty}.
 \end{equation}
 For every $g \in G$, $\phi \in \C^{\infty}(\X)$ and $Y \in Lie(G)$, we have $\D_Y(\phi \circ g) = \D_{Ad(g) Y}(\phi) \circ g$. This identity extends to the universal enveloping algebras $\U(Lie(G))$ as well, and thus we also have $\D_Z(\phi \circ g) = \D_{Ad(g)Z}(\phi) \circ g$, for every monomial $Z$ in ${Y_1, \ldots,Y_r}$, where $Ad(g)$ denotes the extension of the $Ad(g)$ from $Lie(G)$ to $\U(Lie(G))$. Since $Ad(g)Z $ can be written as a finite sum of monomials of degrees not exceeding the degree of $Z$, we conclude that for every $k \geq 1 $, there exists a sub-multiplicative function $g \mapsto C_k(g)$ such that
 \begin{equation*}
     \|\phi \circ g\|_{C^k} \leq C_k(g) \|\phi\|_{C^k}, \text{    for all } \phi \in \C_c^{\infty}(\X).
 \end{equation*}
 In particular, there is a constant $\xi = \xi(m,n,k)$ ( which also depends on fixed choice of weights $w_1, \ldots, w_{m+n}$) such that 
 \begin{equation}
 \label{xi ineq 2}
     \|\phi \circ a_t\| \ll e^{\xi t} \|\phi\|_{C^k}, \text{    for all $t \geq 0$ and } \phi \in \C_c^{\infty}(\X), 
 \end{equation}
 where the suppressed constants are independent of $t$ and $\phi$.

 The starting point of our discussion is to prove a quantitative estimate on correlations of smooth functions on $\X$. We first record two results from \cite{SE} and \cite{KM2}.

\begin{thm}[\cite{SE}, Theorem 10]
    \label{SamuelE}
    Let $$\Phi_t'= \begin{pmatrix}
    e^{\lambda_1t} & & \\
    & \ddots & \\
    & & e^{\lambda_dt}
  \end{pmatrix},$$
  where $\sum_{i=1}^d \lambda_i  =0.$ Given $\Gamma\backslash G \ni x= \Gamma. (M,v)$, we define $$\Phi_t(x) := \Gamma. (M,v). (\Phi_t', 0_{d}).$$ For $f,g \in \Cc(\Gamma\backslash G)$ such that $\int_{\T^d} f =0$ and $t \geq 0$, $$\int_{\Gamma \backslash G} f(x) g(\Phi_t(x))\, d\nu(x)= \Oo(e^{- \lambda_{\max}t} \|f\|_{C^d(\Gamma \backslash G)} \|g\|_{C^d(\Gamma \backslash G)}),$$ where $\lambda_{\max}= \max(\lambda_1, \ldots, \lambda_d),$ $\nu$ is unique $G$ invariant probability measure on $\Gamma \backslash G$.
    \end{thm}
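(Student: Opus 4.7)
The statement is an effective mixing estimate for the diagonal flow $\Phi_t$ on the affine lattice space $\Gamma\backslash G$, in which the hypothesis $\int_{\T^d} f = 0$ is crucial. My strategy is to Fourier expand along the torus fibre, use the mean-zero condition to eliminate the trivial mode, and reduce to known quantitative mixing on the linear quotient $\X_d = \SL_d(\Z)\backslash \SL_d(\R)$.

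View $\Gamma\backslash G$ as a $\T^d$-bundle over $\X_d$ via the projection $\Gamma(M,v)\mapsto \SL_d(\Z)M$, and expand
\[ f(\Gamma(M,v)) = \sum_{k \in \Z^d \setminus \{0\}} \hat f_k(M)\, e^{2\pi i \langle k, v\rangle}, \quad g(\Gamma(M,v)) = \sum_{l \in \Z^d} \hat g_l(M)\, e^{2\pi i \langle l, v\rangle}, \]
with $\hat f_k, \hat g_l$ satisfying the natural $\SL_d(\Z)$-equivariance in $(k,M)$. The hypothesis $\int_{\T^d} f = 0$ is exactly what allows the $k=0$ term to be dropped from the expansion of $f$. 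Because $\Phi_t$ acts by $(M,v)\mapsto(M\Phi_t',v)$, it fixes the fibre coordinate, so orthogonality on the fibre produces the Kronecker constraint $k+l=0$; after unfolding the $\SL_d(\Z)$-sum over orbits of $k$ one obtains
\[ \int_{\Gamma\backslash G} f\cdot(g\circ\Phi_t)\, d\nu \;=\; \sum_{[k_0]\subset (\Z^d \setminus \{0\})/\SL_d(\Z)} \int_{\mathrm{Stab}(k_0)\backslash \SL_d(\R)} \hat f_{k_0}(M)\, \hat g_{-k_0}(M\Phi_t')\, dM. \]

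Each term on the right I would then treat in two steps. First, repeated integration by parts in $v$ yields the Fourier decay $|\hat f_k(M)|,|\hat g_{-k}(M)| \ll (1+|k|)^{-d}$ with an implicit constant involving only $\|f\|_{C^d}$ respectively $\|g\|_{C^d}$; this is uniform in $M$. Second, for each fixed orbit representative $k_0$, the inner integral is a correlation of two smooth functions on the homogeneous space evolving along the diagonal $\SL_d(\R)$-flow $M\mapsto M\Phi_t'$, and I would invoke effective decay of matrix coefficients for this diagonal action---in the spirit of the Kleinbock--Margulis estimates of \cite{KM2}---to extract a factor $e^{-\lambda_{\max}t}$, with $\lambda_{\max}$ emerging as the largest exponent in $\Phi_t'$ and hence the slowest contraction rate along the relevant root directions. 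Summing over orbits $[k_0]$ against the absolutely convergent factor $\sum(1+|k_0|)^{-2d}$ delivers the stated bound.

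The principal obstacle is ensuring that the mixing estimate in Step 2 is uniform in $k_0$ and feeds back into the single Sobolev norm $\|f\|_{C^d}\|g\|_{C^d}$ appearing in the statement. A priori, the effective mixing rate for $\hat f_{k_0}, \hat g_{-k_0}$ on $\X_d$ requires Sobolev norms in the base $\SL_d(\R)$-directions whose size could grow with $|k_0|$. The key observation is that base-direction derivatives of $\hat f_{k_0}, \hat g_{-k_0}$ together with the $v$-derivatives used to produce the $(1+|k_0|)^{-d}$ decay combine to total-space derivatives of $f,g$, so both requirements are controlled by one and the same $C^{d}(\Gamma\backslash G)$ norm; balancing these two sources of regularity at Sobolev order $d=m+n$ is what forces the hypothesis in the statement.
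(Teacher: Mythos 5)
This statement is imported verbatim from Edwards' dissertation \cite{SE} and is used as a black box; the paper contains no proof of it, so there is nothing internal to compare your argument against. I can only assess the proposal on its own terms.

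Your high-level plan (Fourier expand along the $\T^d$-fibre, drop the $k=0$ mode using $\int_{\T^d}f=0$, use that $\Phi_t$ fixes $v$ to impose $k+l=0$, unfold over $\SL_d(\Z)$-orbits) is the right skeleton, and the claim $(M,v)(\Phi_t',0)=(M\Phi_t',v)$ is correct. The gap is in the step that actually produces the decay. After unfolding, the inner integrals are over spaces of the form $\mathrm{Stab}(k_0)\backslash\SL_d(\R)$, which are \emph{not} $\X_d$ and have infinite volume; the Kleinbock--Margulis estimate you cite (our Theorem~\ref{mixing hom}, \cite{KM2}) is a statement about $L^2(\X_d)$ and does not apply to the quasi-regular representation on these parabolic-type quotients. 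Moreover, even a generic matrix-coefficient bound of Howe/Oh/Cowling type for these degenerate principal-series-like representations would produce a Harish--Chandra-type exponent, not $e^{-\lambda_{\max}t}$: the specific rate $\lambda_{\max}$ carries no spectral-gap content and must come from the torus dynamics. Your gloss of $\lambda_{\max}$ as ``the slowest contraction rate'' is also wrong -- it is the largest (fastest expanding) exponent.

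The mechanism you are missing is visible in the coordinate $w=M^{-1}v$ rather than $v$: in those coordinates $\Phi_t$ acts on the fibre by $w\mapsto(\Phi_t')^{-1}w$, so the fibre frequencies are \emph{not} fixed but are pushed forward by $\Phi_t'$. Writing $\hat f^{(v)}_k(M)=\hat f^{(w)}_{M^Tk}(M)$, the term $\hat g_{-k}(M\Phi_t')=\hat g^{(w)}_{-\Phi_t'M^Tk}(M\Phi_t')$ has frequency $\Phi_t'M^Tk$, and one bounds $\sum_{k\neq 0}\lvert\hat f_k\rvert\lvert\hat g_{-k}\circ\Phi_t'\rvert$ by $\sum_{\eta\in M^T\Z^d\setminus\{0\}}(1+|\eta|)^{-a}(1+|\Phi_t'\eta|)^{-b}$ and then applies a Siegel-type integral over $\X_d$; a direct computation of $\int_{\R^d}(1+|\eta|)^{-a}(1+|\Phi_t'\eta|)^{-b}\,d\eta$ is what produces $e^{-\lambda_{\max}t}$, not mixing on $\X_d$. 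Finally, your closing ``balancing'' observation does not resolve the Sobolev budget: if all $d$ derivatives in the $C^d$-norm are spent on the fibre decay $(1+|k|)^{-d}$, there are none left to feed a base-direction mixing estimate, which is further evidence that the rate cannot be coming from base mixing in the first place.
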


\begin{thm}[\cite{KM2}, Cor 2.4.4]
 \label{mixing hom}
     There exists $\gamma > 0$ and $k \geq 1$ such that for all $\phi_1,\phi_2 \in \Cc(\X)$ and $g \in G$, 
     \begin{align}
         \int_{\X} \phi_1(gx) \phi_2(x) \, d\mu_{\X}(x) = \left( \int_{\X} \phi_1 \, d\mu_{\X}\right) \left( \int_{\X} \phi_2 \, d\mu_{\X} \right) \\ 
         + \mathcal{O}\left( e^{-\gamma dist(g,e)} \|\phi_1\|_{L_k^2(\X)}\|\phi_2\|_{L_k^2(\X)} \right)
     \end{align}
 \end{thm}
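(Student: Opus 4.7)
The plan is to prove this quantitative mixing estimate by combining effective decay of matrix coefficients for $G$-representations with Sobolev regularity, exploiting the semi-direct product structure $G = SL_d(\R) \ltimes \R^d$. After subtracting the means $\psi_i := \phi_i - \int \phi_i \, d\mu_{\X}$, the claim reduces to the bound
\begin{equation*}
    \left|\int_\X \psi_1(gx)\, \psi_2(x)\, d\mu_\X(x)\right| \ll e^{-\gamma\, \dist(g,e)}\, \|\psi_1\|_{L_k^2(\X)}\, \|\psi_2\|_{L_k^2(\X)},
\end{equation*}
i.e.\ to controlling matrix coefficients of the regular representation of $G$ on $L^2_0(\X)$.

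The next step is to use the projection $\pi: \X \to \X_d$, under which $\X$ is a $\T^{m+n}$-bundle over $\X_d$. Decomposing along the fibers by Fourier series gives an orthogonal decomposition $L^2(\X) = \bigoplus_{\xi \in \Z^{m+n}} H_\xi$, where $H_0 \simeq L^2(\X_d)$ and, for $\xi \neq 0$, the space $H_\xi$ is, as an $SL_d(\R)$-module, a sum of representations of $SL_d(\R)$ on lattice orbits in the Pontryagin dual of $\R^{m+n}$. The translation factor $\R^{m+n} \subset G$ acts on $H_\xi$ by multiplication by the character $\xi$, so the decay of the $G$-matrix coefficients reduces to exponential decay of matrix coefficients for the unitary $SL_d(\R)$-action on each $H_\xi$.

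On $H_0 = L^2(\X_d)$ one invokes effective Howe--Moore for $SL_d(\R)$ acting on $L^2_0(\X_d)$ (for instance the Kleinbock--Margulis quantitative mixing theorem for semisimple groups), which gives exponential decay at the cost of a fixed Sobolev order $k_0$. For $\xi \neq 0$, the orbit of $\xi$ under $SL_d(\Z)$ is infinite, so the corresponding $SL_d(\R)$-representation contains no almost-invariant vectors, and the uniform decay of matrix coefficients for $SL_d(\R)$-representations away from the trivial one (available from tempered bounds in rank $\geq 2$) yields a decay rate uniformly bounded below. Derivatives in the $\R^{m+n}$-direction act on the Fourier component $\phi_\xi \in H_\xi$ by multiplication by the coordinates of $\xi$, so the $L_k^2(\X)$-norm of $\phi$ dominates $\bigl(\sum_\xi (1+|\xi|)^{2k}\|\phi_\xi\|^2_{H_\xi}\bigr)^{1/2}$; for $k$ large enough, $\sum_\xi (1+|\xi|)^{-2k}$ converges, and summing the resulting Cauchy--Schwarz estimate over $\xi$ produces the claimed uniform bound.

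The hard part will be arranging that the decay rate $\gamma$ and the Sobolev order $k$ are uniform across all Fourier modes and for arbitrary $g \in G$, rather than only for $g$ in the semisimple factor. This needs a quantitative form of Howe--Moore that is uniform over the unitary dual of $SL_d(\R)$ away from the trivial representation, together with care in converting polynomial growth of $|\xi|$ and of the characters of the abelian action into a fixed power of the $L_k^2$ norms; one also has to handle the translation component of $g$ using the fact that $\dist(g,e)$ controls both the semisimple and abelian parts. Once these uniform spectral inputs are granted, assembling the Fourier contributions and collecting constants is essentially routine.
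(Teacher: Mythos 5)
You have misidentified the scope of this statement. Theorem~\ref{mixing hom} is a \emph{cited} result, attributed to Kleinbock--Margulis (Corollary 2.4.4 of \cite{KM2}); the paper offers no proof of it, and the citation concerns effective mixing for a \emph{semisimple} Lie group acting on a finite-volume homogeneous space. In Section~\ref{Aff Mix Property} the theorem is applied only to $\X_d = \SL_d(\R)/\SL_d(\Z)$ with $g = b_t'$ a diagonal element of $\SL_d(\R)$ (this is the step in the proof of Theorem~\ref{Aff Mixing} that invokes Theorem~\ref{mixing hom}). The symbols $\X$ and $G$ in the theorem statement are therefore generic semisimple notation, not the affine space and affine group $\SL_d(\R) \ltimes \R^d$ defined at the start of Section~\ref{Aff Intro}.

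Your proposal instead reads $\X$ as the space of affine lattices and $G$ as the affine group, and tries to prove the estimate by Fourier-decomposing $L^2(\X)$ along the torus fibers of $\pi : \X \to \X_d$ and invoking effective Howe--Moore mode by mode. That is a sensible skeleton for the \emph{affine} mixing statement, and it is essentially the mechanism behind the paper's own proof of Theorem~\ref{Aff Mixing}, which uses Edwards's Theorem~\ref{SamuelE} to handle the functions orthogonal to the fiber-averages. But it cannot yield the statement as literally written, with a bound $e^{-\gamma\,\dist(g,e)}$ uniform over all $g \in G$. If $g = (I,v)$ is a pure translation, the induced map on $\X$ translates each $\T^d$-fiber, which does \emph{not} mix; the correlation integral therefore has no decay as $\|v\|\to\infty$, even though $\dist(g,e)\to\infty$. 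Your own Fourier picture makes this visible: the translation component of $g$ acts on each mode $H_\xi$ by a unimodular scalar and contributes nothing to the decay, so what you would actually obtain is decay controlled by the semisimple part of $g$ alone, a strictly weaker conclusion that does not match the stated $\dist(g,e)$ bound. In short, the theorem as quoted is true (and intended) only in the semisimple setting, and there it is an external black box that the paper imports rather than proves; the affine analogue you are trying to establish is the content of Theorem~\ref{Aff Mixing}, which is stated for the specific one-parameter semisimple flow $b_t$ precisely to avoid the obstruction above.
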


We are now ready for

\begin{thm} \label{Aff Mixing}
There exists $\gamma > 0$ and $k \geq 1$ such that for all $\phi_1, \phi_2 \in \C_c^{\infty}(\X)$ and $t \geq 0$,
\begin{equation}
    \int_{\X} \phi_1(b_tx) \phi_2(x) \, d\mu_{\X}(x) = \left( \int_{\X} \phi_1(x) \, d\mu_{\X}(x)\right) \left( \int_{\X} \phi_2(x) \, d\mu_{\X}(x) \right) + \mathcal{O}\left( e^{-\gamma t} \|\phi_1\|_{\C^k}\|\phi_2\|_{\C^k} \right).
\end{equation}
\end{thm}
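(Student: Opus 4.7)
The plan is to combine Theorem \ref{mixing hom} (mixing on the homogeneous space $\X_d$ of unimodular lattices) with Theorem \ref{SamuelE} (quantitative mixing on the space of affine lattices for functions having vanishing torus mean) via the fibration $\pi \colon \X \to \X_d$ defined in Section \ref{Aff Intro}. Each fiber $\pi^{-1}(y)$ is a $d$-dimensional torus carrying a natural Haar probability measure $\mu_y$, and $\mu_\X$ disintegrates along $\pi$ as $d\mu_\X = d\mu_y\,d\mu_{\X_d}(y)$. For any $\phi \in \Cc(\X)$, I set
\begin{equation*}
\phi^0(y) := \int_{\pi^{-1}(y)} \phi \, d\mu_y, \qquad \psi(x) := \phi(x) - \phi^0(\pi(x)),
\end{equation*}
so that $\phi = \phi^0 \circ \pi + \psi$ with $\psi$ having zero integral along every fiber.

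Applying this decomposition to both $\phi_1$ and $\phi_2$, I expand $\int_\X \phi_1(b_t x)\phi_2(x)\,d\mu_\X(x)$ into four cross terms. Since $\pi \circ b_t = b_t' \circ \pi$ and $b_t$ preserves the fiber Haar measures, the two ``mixed'' terms vanish after disintegrating along $\pi$: in each such term one factor is constant along fibers while the other integrates to zero on every fiber. The ``base'' term reduces via $\pi_* \mu_\X = \mu_{\X_d}$ to
\begin{equation*}
\int_{\X_d} \phi_1^0(b_t' y)\phi_2^0(y)\,d\mu_{\X_d}(y),
\end{equation*}
and Theorem \ref{mixing hom} applied on $\X_d$ delivers the expected main term $\bigl(\int \phi_1\bigr)\bigl(\int \phi_2\bigr)$ plus an error of order $e^{-\gamma_1 t}\|\phi_1^0\|_{L_{k_1}^2(\X_d)}\|\phi_2^0\|_{L_{k_1}^2(\X_d)}$. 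Since $\mu_{\X_d}$ is a probability measure and fiber averaging commutes with base derivatives while killing torus derivatives, both Sobolev norms are controlled by $\|\phi_i\|_{\C^{k_1}}$.

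The remaining term $\int_\X \psi_1(b_t x)\psi_2(x)\,d\mu_\X(x)$ is precisely the one Theorem \ref{SamuelE} is designed for, since both $\psi_i$ have vanishing fiber mean. After passing from our left $G$-action on $G/\Gamma$ to the equivalent right action on $\Gamma \backslash G$ (which replaces $b_t$ by $b_{-t}$ but keeps $\lambda_{\max} > 0$), the theorem yields an error $\mathcal{O}\bigl(e^{-\gamma_2 t}\|\psi_1\|_{\C^d}\|\psi_2\|_{\C^d}\bigr)$ with $\|\psi_i\|_{\C^d} \leq 2\|\phi_i\|_{\C^d}$. Choosing $k := \max(k_1,d)$ and $\gamma := \min(\gamma_1,\gamma_2)$ concludes the proof. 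The main obstacle is purely organizational: one must check smoothness and explicit $\C^k$ control of $\phi^0 \circ \pi$ and $\psi$, and carefully track the left-versus-right action conventions so as to legitimately invoke Theorem \ref{SamuelE} in the stated form.
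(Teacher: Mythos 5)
Your proposal is correct and follows essentially the same route as the paper: split off the fiber average along $\pi\colon\X\to\X_d$, handle the zero-fiber-mean part with Theorem~\ref{SamuelE} via the isomorphism $\Gamma\backslash G\simeq\X$, and handle the base part with Theorem~\ref{mixing hom} on $\X_d$. The only cosmetic difference is that you decompose both $\phi_1$ and $\phi_2$ symmetrically and note the two cross terms vanish identically, whereas the paper decomposes only $\phi_2$ and absorbs the corresponding cross term into the Theorem~\ref{SamuelE} estimate (which still decays because the vanishing-mean hypothesis is only needed on one factor); either organization works equally well.
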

\begin{proof}
Define a function $\psi_i : \X_d  \rightarrow \R$ as $\psi_i(A .\SL_d(\Z)) = \int_{\T^{d}} \phi_i((A, Aw).\Gamma) \, dw$ for $i=1,2$. Then we have
    \begin{align}
    \label{aff 1 1}
        &\int_{\X} \phi_1(b_tx) \phi_2(x) \, d\mu_{\X}(x) =\int_{\X}   \phi_1(b_tx) \left(\phi_2(x) - \psi_2(\pi(x)) \right)  \, d\mu_{\X}(x) + \int_{\X} \phi_1(b_t x)  \psi_2(\pi(x))  \, d\mu_{\X}(x)
    \end{align}
     Now, to estimate the first term in \eqref{aff 1 1}, we use Theorem \ref{SamuelE}. Notice that we have a measure space isomorphism $\Psi: (\Gamma \backslash G, \nu) \rightarrow (\X, \mu_{\X})$ given by $$\Psi(\Gamma. (A,v))= (A,v)^{-1}.\Gamma= (A^{-1}, -A^{-1}v). \Gamma.$$ Using this isomorphism, define $f,g \in \Cc( \Gamma \backslash G) $ as 
\begin{align*}
    f(\Gamma. (A,v)) &= (\phi_2 - \psi_2 \circ \pi ) \circ \Psi (\Gamma. (A,v)) = \phi_2((A^{-1}, -A^{-1}v). \Gamma) -  \psi_2(A^{-1} \SL_d(\Z)), \\
    g(\Gamma. (A,v)) &= \psi_1 \circ \Psi (\Gamma. (A,v))= \psi_1((A^{-1}, -A^{-1}v). \Gamma)
\end{align*}
It is clear that we have $\int_{\T^d} f =0$. Now, apply Theorem \ref{SamuelE} with $\Phi_t'= (b_t')^{-1}$ to get that
\begin{align*}
    \int_{\X}   \phi_1(b_tx) \left(\phi_2(x) - \psi_2(\pi(x)) \right)  \, d\mu_{\X}(x) &= \int_{\Gamma \backslash G} \phi_1(b_t\Psi(x)) \left(\phi_2(\Psi(x)) - \psi_2(\pi(\Psi(x))) \right)\, d\nu(x) \\
    &= \int_{\Gamma \backslash G} f(x) g(\Phi_t(x))\, d\nu(x)\\
    & \ll e^{-t/n} \|f\|_{C^d(\Gamma \backslash G)} \|g\|_{C^d(\Gamma \backslash G)} \\
    & \ll e^{-t/n} \|\phi_1\|_{\C^d}\|\phi_2\|_{\C^d}
\end{align*}

To estimate second term in \eqref{aff 1 1}, we note that the fundamental domain of $G / \Gamma$ equals the disjoint union of $A \times \{Av : v \in [0,1)^{m+n}\}$ where $A$ belongs to the fundamental region of the action of $\SL_d(\Z)$ on $\SL_d(\R)$, say $\mathcal{F}$. If we denote by $m_{G_d}$, the unique Haar measure on $G_d$ such that $m_{G_d}(\mathcal{F})=1$, then we have     
\begin{align}
        \int_{\X} \phi_1(b_t x)  \psi_2(\pi(x))  \, d\nu(x)  &= \int_{\mathcal{F}} \int_{\T^{m+n}} \phi_1( b_t((A,Av).\Gamma)) \psi(A. \SL_d(\Z)) \, dv dm_{G_d}(A) \nonumber  \\
        &= \int_{\mathcal{F}} \psi_1(b_t' A.\SL_d(\Z)) \psi_2(A. \SL_d(\Z)) \,  dm_{G_d}(A) \nonumber \\
        &= \int_{\X_d} \psi_1(b_t'y) \psi_2(y) \,  d\mu_{\X_d}(y)  \nonumber \\
        &= \left( \int_{\X_d} \psi_1 \, d\mu_{\X_d}\right) \left( \int_{\X_d} \psi_2 \, d\mu_{\X_d}\right) + \Oo(e^{- \gamma t} \|\psi_1\|_{C^k(\X_d)} \|\psi_2\|_{C^k(\X_d)})  \label{aff 1 2}\\
        &= \left( \int_{\X} \phi_1(x) \, d\mu_{\X}(x)\right) \left( \int_{\X} \phi_2(x) \, d\mu_{\X}(x) \right) + \mathcal{O}\left( e^{-\gamma t} \|\phi_1\|_{\C^k}\|\phi_2\|_{\C^k} \right) 
\end{align}
where the equality \eqref{aff 1 2} holds by Theorem \ref{mixing hom}. 

This completes the proof.
\end{proof}

\emph{From now  on  we  fix  $k \geq 1$   so   that  Theorem  \ref{Aff Mixing}\  holds}.

We fix a right invariant metric $dist$ on $\X_{d+1} \simeq G_{d+1}/ \Gamma_{d+1}= \SL_{d+1}(\R)/ \SL_{d+1}(\Z)$ induced from a right-invariant Riemannian metric on $G_{d+1}$. We denote by $B_{{G_{d+1}}}(\rho)$ the ball of radius $\rho$ centered at identity in ${G_{d+1}}$. For a point $x \in {\X_{d+1}}$, we let ${i_{d+1}}(x)$ denote the $injectivity \ radius$ at $x \in \X_{d+1}$; namely the supremum over $\rho > 0$ such that the map $B_{{G_{d+1}}}(\rho) \rightarrow B_{{G}}(\rho)x : g \mapsto gx$ is injective.

Since we can identify $\X$ as subset of $\X_{d+1}$ via the map $h$, defined as in Section \ref{Aff Intro}. So, we get a right invariant metric on $\X$ given by $dist_{|\X \times \X}$ . In exact same way, we define the injectivity radius at $x \in \X$ , denoted by $i(x)$, as the supremum over $\rho > 0$ such that the map $B_{{G}}(\rho) \rightarrow B_{{G}}(\rho)x : g \mapsto gx$ is injective, where $B_{{G}}(\rho)$ the ball of radius $\rho$ centered at identity in ${G}$. Then it is clear that 
\begin{equation}
    \label{eq:aff inj r}
    i(x) \geq i_{d+1}(h(x)).
\end{equation}

Now, define for $\epsilon >0$,  $\K_{\e}:= \pi^{-1}(\K_{\e,d})$, where again $\K_{\e,d}$ is defined as 
\begin{equation*}
    {\K}_{\epsilon,d} =\{ \Lambda \in {\X_{d}}: \|v\| > \epsilon, \text{   for all } v \in \Lambda \setminus \{0\} \}.
\end{equation*} 
Since, the map $\pi$ is a proper map and $\K_{\e,d}$ is compact by Mahler's criterion, we get that $\K_{\e}$ is a compact subset of $\X$. 

\begin{prop}
    \label{inj r aff}
   $i(x) \gg \epsilon^{d+1}$ for all $x \in \K_{\epsilon}$ and $0 < \e <1$.
\end{prop}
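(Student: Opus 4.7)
The plan is to use the inequality \eqref{eq:aff inj r} to reduce the problem to bounding $i_{d+1}(h(x))$ from below, and then to run a standard Minkowski-type argument inside $\X_{d+1}$. The key observation is that the condition $x \in \K_\epsilon$ forces the unimodular lattice $h(x) \subset \R^{d+1}$ to have systole at least $\epsilon$, after which a basis-of-bounded-norm argument gives an injectivity radius lower bound of the correct polynomial order.

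First I would unpack the structure of $h(x)$. Writing $x = (A,u)\Gamma$, the lattice $h(x)$ consists of the vectors $(Az + ku,\,k)$ with $(z,k) \in \Z^{m+n}\times \Z$. Splitting by the last coordinate: if $k \neq 0$, then the vector has length at least $|k| \geq 1 > \epsilon$; while if $k = 0$ and $z \neq 0$, then $(Az,0)$ lies in $A\Z^{m+n} \times \{0\}$, which has length at least $\epsilon$ since $\pi(x) \in \K_{\epsilon,d}$. Hence $\lambda_1(h(x)) \geq \epsilon$.

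Next I would apply Minkowski's second theorem to the unimodular lattice $h(x)$ in $\R^{d+1}$. Combined with $\lambda_1(h(x)) \geq \epsilon$, the relation $\lambda_1 \lambda_2 \cdots \lambda_{d+1} \asymp 1$ yields $\lambda_{d+1}(h(x)) \ll \epsilon^{-d}$, and a standard consequence of Minkowski's second theorem produces a $\Z$-basis $v_1,\ldots,v_{d+1}$ of $h(x)$ with $\|v_i\| \ll \epsilon^{-d}$. Now suppose $g \in G_{d+1}$ satisfies $\dist(g,e) = \rho$ and $gh(x) = h(x)$. Since $\dist$ is induced by a right-invariant Riemannian metric, near the identity one has $\|g - I\|_{\text{op}} \ll \rho$, and therefore $\|gv_i - v_i\| \ll \rho\,\epsilon^{-d}$ for each $i$. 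But $gv_i - v_i \in h(x)$, so either $gv_i = v_i$ or $\|gv_i - v_i\| \geq \epsilon$. Choosing $\rho$ smaller than a constant times $\epsilon^{d+1}$ excludes the second alternative, forcing $g$ to fix an $\R^{d+1}$-basis and hence $g = I$. This yields $i_{d+1}(h(x)) \gg \epsilon^{d+1}$, and the proposition follows from \eqref{eq:aff inj r}.

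The main obstacle is essentially bookkeeping rather than conceptual: one needs to keep the implicit constants uniform in the local comparison between $\dist(g,e)$ and $\|g - I\|_{\text{op}}$ (valid only on some fixed neighborhood of the identity, so one should first handle the case when $\epsilon^{d+1}$ is smaller than a fixed threshold and otherwise absorb the result into the implicit constant), and in the Minkowski basis bound, so that the exponents combine cleanly to the stated $d+1 = m+n+1$.
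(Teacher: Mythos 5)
Your proposal takes essentially the same route as the paper: both pass through the embedding $h:\X \hookrightarrow \X_{d+1}$, the inequality $i(x) \geq i_{d+1}(h(x))$ from \eqref{eq:aff inj r}, and the observation that $h(x)$ has systole greater than $\epsilon$ when $x \in \K_\epsilon$ and $\epsilon < 1$. The only difference is presentational: the paper deduces $i_{d+1}(h(x)) \gg \epsilon^{d+1}$ by citing \cite{KM1}, Proposition 3.5, whereas you re-derive that bound via Minkowski's second theorem and a reduced basis, and you make explicit (by splitting on the last coordinate) the inclusion $h(\K_\epsilon) \subset \K_{\epsilon,d+1}$ which the paper asserts without elaboration.
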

\begin{proof}
    Note that for $\e <1$, we have the equality $\K_{\e} = h^{-1}(\K_{\e,d+1}) \subset \X$ where 
\begin{equation*}
    {\K}_{\epsilon,d+1} =\{ \Lambda \in {\X_{d+1}}: \|v\| > \epsilon, \text{   for all } v \in \Lambda \setminus \{0\} \}.
\end{equation*} 
Hence, the proof follows from equation \eqref{eq:aff inj r} and the following proposition
\begin{prop} [\cite{KM1} Prop 3.5] 
\label{inj r hom}
$i_{d+1}(x) \gg \epsilon^{d+1}$ for all $x \in \K_{\epsilon,d+1}$.
\end{prop}
\end{proof}

Our next task is to prove the followig effective equidistribution theorem.
\begin{thm}
\label{aff main lemma for mixing}
    There exists $\rho_0 > 0$ and $c,\gamma >0 $ such that for every $\rho \in (0,\rho_0),$ $f \in \C_c^{\infty}(U)$ satisfying $supp(f) \subset B_G(\rho)$, $x \in \X$ with $i(x) > 2\rho$, $\phi \in \C_c^{\infty}(\X)$, and $t \geq 0$, 

    \begin{equation}
        \int_U f(u) \phi(b_t u x ) \, du = \left( \int_U f(u) \,du \right) \left( \int_{\X} \phi \, d\mu_{\X} \right) + \mathcal{O} \left( \rho \|f\|_{L^1(U)} \|\phi \|_{Lip}  + \rho^{-c} e^{-\gamma t} \|f\|_{C^k} \|\phi\|_{\C^k} \right)
    \end{equation}
\end{thm}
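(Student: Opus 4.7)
The plan is to follow the standard ``thickening of the transversal'' argument of Kleinbock--Margulis / Eskin--McMullen, adapted to our affine setting. The goal is to reduce the one-parameter average $\int_U f(u)\phi(b_t ux)\,du$ to a correlation integral on $\X$ to which Theorem \ref{Aff Mixing} can be applied.

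First I would fix a local complement $V \leq G$ to $U$, chosen so that the product map $V \times U \to G$, $(v,u)\mapsto vu$, is a diffeomorphism from a neighborhood of $(e,e)$ onto a neighborhood of $e \in G$, and such that the Lie algebra of $V$ is $Ad(b_t)$--\emph{non-expanding} (i.e.\ consists of root spaces of non-positive weight with respect to $b_t$). Then I would pick a smooth bump $\psi \in \C_c^\infty(V)$ supported in $B_V(\rho)$ with $\int_V \psi\,dv = 1$ and $\|\psi\|_{C^k} \ll \rho^{-c}$ for $c := \dim V + k$. The first step is the estimate
\begin{equation*}
\int_U f(u)\phi(b_t ux)\,du = \int_V \int_U f(u)\psi(v)\phi(b_t vu x)\,du\,dv + O\bigl(\rho\|f\|_{L^1(U)}\|\phi\|_{\mathrm{Lip}}\bigr),
\end{equation*}
obtained by writing $b_t v u x = (b_t v b_t^{-1})\cdot(b_t ux)$ and using right-invariance of the metric together with the non-expansion property $\mathrm{dist}(b_t v b_t^{-1},e) \ll \rho$ for $v \in B_V(\rho)$.

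Next I would convert the double integral into an integral over $\X$. The hypothesis $i(x) > 2\rho$ guarantees that the map $g \mapsto gx$ is injective on a $\rho$-neighborhood of $e$ in $G$, so the function $F(vu) := f(u)\psi(v)$ (defined on the local product chart $VU$, times the Jacobian $1+O(\rho)$ of the product map) descends to a well-defined function $\widetilde F \in \C_c^\infty(\X)$ supported in $B_G(C\rho)x$, with $\int_\X \widetilde F\,d\mu_{\X} = \int_U f \cdot \int_V \psi = \int_U f$ and $\|\widetilde F\|_{C^k} \ll \|f\|_{C^k}\|\psi\|_{C^k} \ll \rho^{-c}\|f\|_{C^k}$. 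Under this identification the thickened integral becomes $\int_\X \widetilde F(y)\phi(b_t y)\,d\mu_\X(y)$, to which Theorem \ref{Aff Mixing} applies and yields
\begin{equation*}
\int_\X \widetilde F(y)\phi(b_t y)\,d\mu_\X(y) = \Bigl(\int_U f\Bigr)\Bigl(\int_\X \phi\,d\mu_\X\Bigr) + O\bigl(e^{-\gamma t}\rho^{-c}\|f\|_{C^k}\|\phi\|_{C^k}\bigr).
\end{equation*}
Combining this with the thickening error completes the proof.

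The step I expect to require the most care is the choice and analysis of the complement $V$: because $G = \SL_{m+n}(\R)\ltimes \R^{m+n}$ is a semidirect product and $b_t = (b_t',0)$ acts on the $\R^{m+n}$-factor through $b_t'$, one must check that a complement to $\mathfrak{u}$ exists inside $\mathfrak{g}$ whose root spaces are all of non-positive $b_t$-weight, so that $Ad(b_t)$ is non-expanding on $V$ uniformly in $t\geq 0$. One also needs to keep track of the Jacobian of the product map $V\times U \to G$ and of the descent from $m_G$ to $\mu_\X$, and to verify that $\widetilde F$ inherits $C^k$-bounds from $f\otimes\psi$ (using the bound on $u\in \mathrm{supp}(f)$ together with the chain rule for the chart). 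Once the geometry of $V$ is settled, the remaining calculations are mechanical and yield the stated error terms with the constants $c$ and $\gamma$ determined by $\dim V$, $k$, and the mixing rate from Theorem \ref{Aff Mixing}.
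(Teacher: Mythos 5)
Your proposal is correct and follows essentially the same route as the paper, which adapts the Kleinbock--Margulis thickening argument: the paper's complement is precisely $\tilde{U} = U^-U^0$ (with $U^-$ of negative and $U^0$ of zero $b_t$-weight inside $G_{d+1}$), the bump $\psi$ on $V$ corresponds to $\tilde\theta(u^-u^0)=\theta^-(u^-)\theta^0(u^0)\Delta(u^0)^{-1}$ including the modular-function correction you gesture at via the Jacobian, and the non-expansion $b_t B_{\tilde U}(r) b_t^{-1}\subset B_{\tilde U}(r)$ is exactly the estimate you use to bound the thickening error. The remaining steps --- descending to $\X$ via the injectivity radius, the $C^k$ bound $\|\varphi\|_{C^k}\ll\rho^{-(2k+2N)}\|f\|_{C^k}$, and the appeal to Theorem \ref{Aff Mixing} --- match the paper's proof.
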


\begin{proof}

The proof is motivated from proof of [Theorem 2.3 of \cite{KM1}]. In this proof we identify $G$ as the subgroup of $G_{d+1}= SL_{m+n+1}(\R)$ via the map $H$, defined as in Section \ref{Aff Intro}.
Let us define 
\begin{equation*}
    U^- = \left\{ \begin{pmatrix}
        I_m & 0 & 0 \\  \alpha & I_n & \beta \\ 0 & 0 & 1
    \end{pmatrix}  : \alpha \in \M_{n \times m}(\R), \beta \in \M_{n \times 1}(\R) \right\}
\end{equation*}
and 
\begin{equation*}
    U^0 = \left\{ \begin{pmatrix}
        A & 0 & 0 \\  0 & B & 0 \\ 0 & 0 & 1
    \end{pmatrix}  : A \in \GL_{m}(\R), B \in \GL_{n}(\R), det(A)det(B)=1 \right\}.
\end{equation*}
Then the product map $U^- \times U^0 \times U \rightarrow G$ is a local diffeormorphism. Define $\rho_0$ so that the inverse map is well defined on $B_G(2\rho_0)$. Note that $U^-$ is expanding horospherical with respect to $ b_{-t}, t>0$, while $U^0$ is centralized by $\{b_t\}$. Thus, the inner automorphism of G given by $h \rightarrow b_t h(b_t)^{-1}$ is non-expanding on the group $\Tilde{U} := U^-U^0$ and the latter is 
\begin{equation*}
 \left\{ \begin{pmatrix}
        A & 0 & 0 \\  \alpha & B & \beta \\ 0 & 0 & 1
    \end{pmatrix}  : A \in \GL_{m}(\R), B \in \GL_{n}(\R), det(A)det(B)=1 ,\alpha \in \M_{n \times m}(\R), \beta \in \M_{n \times 1}(\R) \right\}.
\end{equation*}
In fact,  one has 
\begin{equation} \label{A_0}
     \forall~r>0 , \ \ \forall~t>0, \ \ b_t(B_{\Tilde{U}}(r))b_t^{-1} \subset B_{\Tilde{U}}(r).
\end{equation}
Let $\nu$ denote the Lebesgue measure on $U \cong \R^{m \times (n+1)}$. Let us choose Haar measures $\nu^-$, $\nu^0$ on $U^-$, $U^0$ respectively, normalized so that $m_G$ is locally almost the product of  $\nu^-$, $\nu^0$ and $\nu$. Then, \textbf{ by [\cite{EW}, Lem. 11.31]}, $m_G$ can be expressed via $\nu^-$, $\nu^0$ and $\nu$ in the following way: for any $\varphi \in L^1(G)$
\begin{equation}
    \label{A_1}
    \int_{U^- U^0 U} \varphi(g) \, dm_G(g)  = \int_{U^- \times U^0 \times U} \varphi(u^- u^0 u) \Delta(u^0) \, d\nu^-(u^-) d\nu^0(u^0) d\nu(u)
\end{equation}
where $\Delta$ is the modular function of (the non-unimodular group) $\Tilde{U}$.

 Now, we are given $\rho \in (0, \rho_0)$, $f \in \C_c^{\infty}(U)$ satisfying $supp(f) \subset B_G(\rho)$, $x \in \X$ with $i(x) > 2\rho$, $\phi \in \C_c^{\infty}(\X)$, and $t>0$. Without loss of generality, we may assume that $\int_{\X}\phi = 0$, otherwise subtract $\int_{\X}\phi$ from $\phi$. We will also need the following Lemma.
 \begin{lem} [\cite{KM2}, Lemma 2.4.7] 
\label{lem_KM}

(a) For any $r>0$, there exists a non-negative function $\theta \in \C_c^{\infty}(\R^d)$ such that $supp(\theta)$ is inside $B(r)$, $\int_{\R^d}\theta =1$, and $\|\theta\|_{\C^k} \ll r^{-(k + N)}$.

(b) Given $\theta_1, \theta_2 \in \C_c^{\infty}(\R^d)$, then the function $\theta(x)= \theta_1(x) \theta_2(x)$ 
satisfy $\theta \in \C_c^{\infty}(\R)$ and $\|\theta\|_{\C^k} \ll \|\theta_1\|_{\C^k}\|\theta_2\|_{\C^k}$.

(c) Given $\theta_1 \in \C_c^{\infty}(\R^{d_1})$, $\theta_2 \in \C_c^{\infty}(\R^{d_2})$, define the function 
 $\theta(x) \in \C_c^{\infty}(\R^{d_1 + d_2})$ by  $\theta(x_1,x_2)= \theta_1(x_1).\\ \theta_2(x_2)$. Then $\|\theta\|_{\C^k} \ll \|\theta_1\|_{\C^k} \|\theta_2\|_{\C^k}$.
\end{lem}

\begin{rem}
In \cite{KM2} the Lemma was proved for Sobolev norms $\|.\|_l$ instead of $\|.\|_{\C^k}$, but the same statement holds for $\|.\|_{\C^k}$.
\end{rem}
 
 Using, Lemma \ref{lem_KM}, one can choose non-negative functions $\theta^- \in \C_c^{\infty}(U^-)$,  $\theta^0 \in \C_c^{\infty}(U^0)$ with 
 \begin{equation}
     \label{A_2}
     \int_{U^-} \theta^- = \int_{U^0} \theta^0 = 1
 \end{equation}
such that
\begin{equation}
    \label{A_3}
    supp(\theta^-).supp(\theta^0) \subset B_{\Tilde{U}}(\rho),
\end{equation}
and at the same time 
\begin{equation}
    \label{A_4}
    \|\Tilde{\theta}\|_{\C^k} \ll \rho ^{-(2k + 2N)}, \text{   with } N= n^2 + m^2 + nm + n -1
\end{equation}
where $\Tilde{\theta} \in \C_c^{\infty} (\Tilde{U})$ is defined by
\begin{equation}
    \label{A_5}
    \Tilde{\theta}(u^- u^0) := \theta^-(u^-) \theta^0 (u^0) \Delta(u^0)^{-1}.
\end{equation}
Also define $\varphi \in \C_c^{\infty} (\X)$ by $\varphi(u^-u^0 ux)= \Tilde{\theta}(u^- u^0)f(h)$; the definition makes sense because of \eqref{A_3} and the assumptions on $f$ and $x$.
Then we have,
\begin{align*}
   & \left| \int_{U} f(u)\phi(b_t u x )\, du - \int_{\X} \varphi(y) \phi(b_t y) \,d\mu_{\X}(y) \right| \\
&\underset{supp(\varphi) \subset B_G(2\rho)x}{=}  \left| \int_{U} f(u)\phi(b_t u x)\, du -  \int_{G} \varphi(gx) \phi(b_t g x) \,dm_G(g)\right| \\
    & \underset{\eqref{A_1}}{=}  \left| \int_{U} f(u)\phi(b_t u x)\, du -  \int_{U^- \times U^0 \times U} \Tilde{\theta}(u^-u^0) f(u) \phi(b_t u^- u^0 u x)\Delta(u^0)^{-1} \,d\nu^-(u^-) d\nu^0(u^0) d\nu(u)\right| \\
    &\underset{\eqref{A_2}, \eqref{A_5}}{=} \left|\int_{U^- \times U^0 \times U} \theta^-(u^-) \theta^0(u^0) f(u) (\phi(b_t u x) -\phi(b_t u^- u^0 ux)) \,d\nu^-(u^-) d\nu^0(u^0) d\nu(u)\right| \\
    & \underset{\eqref{A_2}, \eqref{A_5}}{=} \left|\int_{U^- \times U^0 \times U} \theta^-(u^-) \theta^0(u^0) f(u) (\phi(b_t u x) -\phi((b_t u^- u^0 b_t^{-1}) b_t ux)) \,d\nu^-(u^-) d\nu^0(u^0) d\nu(u)\right| \\
   & \underset{\eqref{A_0}, \eqref{A_3}}{\leq } \underset{g \in B_{\Tilde{U}}(\rho), y \in \X }{sup} |\phi(gy)- \phi(y)| .\int_{U^- \times U^0 \times U} |\theta^-(u^-) \theta^0(u^0) f(u)| \,d\nu^-(u^-) d\nu^0(u^0) d\nu(u) \\
   & \leq  \|\phi\|_{Lip}. r. \int_U|f|.
\end{align*}
    
Now, in view of Theorem \ref{Aff Mixing} we have
\begin{equation}
  \left| \int_{\X} \varphi(y) \phi(b_t y) \,d\mu_{\X}(y) \right| \ll   \|\varphi\|_{\C^k} .\|\phi\|_{\C^k} .e^{-\gamma t}. 
\end{equation}
On  the other hand, in view of Lemma \ref{lem_KM} and fact the projection map from $G \rightarrow \X$ is a local isometry, we get
\begin{equation*}
    \|\varphi\|_{\C^k} = \|\Tilde{\theta}. f\| \ll \|\Tilde{\theta}\|_{\C^k}\|f\|_{\C^k} \underset{\eqref{A_4}}{\ll} \rho^{-(2k + 2N)} \|f\|_{\C^k}.
\end{equation*}
Hence the proof.

\end{proof}

We will embed the flow $(a_t)$ in a multi-parameter flow as follows. 
Denote by $S^+$ the cone in $\R^{m+n}$ consisting of those $\bar{s}= (s_1,\ldots, s_{m+n})$ which satisfy
\begin{equation*}
    s_1,\ldots, s_{m+n} >0 \text{   and    } \sum_{i=1}^m s_i = \sum_{i= m+1}^{m+n} s_i.
\end{equation*}
For $\bar{s}= (s_1, \ldots, s_{m+n}) \in \R^{m+n}$, we set 
\begin{equation*}
    a(\bar{s}) := (a'(\bar{s}),0_d) \in G,
\end{equation*}
where $$a'(\bar{s})= diag(e^{s_1}, \ldots, e^{s_{m}}, e^{-s_{m+1}}, \ldots, e^{-s_{m+n}}).$$
For $\bar{s}=(s_1, \ldots, s_{m+n}) \in S^+ $, we also define  $$\lfloor \bar{s}\rfloor := \min(s_1,\ldots, s_{m+n}).$$
With $\bar{s_t}:= (w_1t,\ldots, w_{m+n}t)$, we see that $a_t = a(\bar{s}_t)$.

\begin{thm}
\label{aff lemma 2}
    There exists $\theta= \theta(m,n) >0$ such that for every compact $L \subset \X$ and a Euclidean ball $ B \subset U$ centered at the identity, there exists $T_0 >0$ such that for every $\epsilon \in (0,1)$, $x \in L$, and $\bar{s} \in S^+$ satisfying $\lfloor \bar{s}\rfloor \geq T_0$, one has 
\begin{equation*}
    |\{ u \in B: a(\bar{s})ux \notin \K_{\epsilon}\}| \ll \epsilon^{\theta} |B|
\end{equation*}
\end{thm}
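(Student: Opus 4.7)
The strategy is to project everything down to the space $\X_d$ of unimodular lattices via $\pi:\X\to\X_d$ and appeal to the classical Kleinbock--Margulis non-divergence estimate there. Since $\K_\e=\pi^{-1}(\K_{\e,d})$ by definition, the condition $a(\bar s)ux\notin\K_\e$ is equivalent to $\pi(a(\bar s)ux)\notin\K_{\e,d}$. The semidirect-product law $(A,v)(B,w)=(AB,Aw+v)$ on $G$ shows that $\pi$ is equivariant in the sense $\pi(gy)=\tilde\pi(g)\pi(y)$, so $\pi(a(\bar s)ux)=a'(\bar s)\tilde\pi(u)\pi(x)$ depends only on the linear part $\tilde\pi(u)\in U_d$ of $u$, not on its translation coordinates.

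Next, I would identify $U\cong U_d\times\R^m$ by writing $u=(u',u'')$ with $u'\in\M_{m\times n}(\R)$ parametrizing $\tilde\pi(u)$ and $u''\in\R^m$ the extra translation column; the Lebesgue measure on $U$ factors as the product. A Euclidean ball $B\subset U$ of radius $r$ centered at identity is sandwiched between $B_{U_d}(r/\sqrt2)\times B_{\R^m}(r/\sqrt2)$ and $B_{U_d}(r)\times B_{\R^m}(r)$. Writing the bad set as a union of translation fibers above the bad $u'$ in $U_d$ and applying Fubini, one obtains
$$\frac{|\{u\in B:\,a(\bar s)ux\notin\K_\e\}|}{|B|}\ \ll_{m,n}\ \frac{|\{u'\in B_{U_d}(r):\,a'(\bar s)u'\pi(x)\notin\K_{\e,d}\}|}{|B_{U_d}(r)|}.$$

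The problem is thereby reduced to the analogous non-escape estimate on the space of unimodular lattices: for every compact $L_d\subset\X_d$ and every Euclidean ball $B_d\subset U_d$ centered at the identity, there exists $T_0>0$ such that $|\{u'\in B_d:a'(\bar s)u'y\notin\K_{\e,d}\}|\ll\e^\theta|B_d|$ uniformly over $y\in L_d$, $\e\in(0,1)$, and $\bar s\in S^+$ with $\lfloor\bar s\rfloor\geq T_0$. This is a standard Kleinbock--Margulis non-divergence estimate for the $U_d$-action on $\X_d$, which I would apply with $L_d:=\pi(L)$ (compact by continuity of $\pi$) and the ball $B_{U_d}(r)$. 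The exponent $\theta=\theta(m,n)>0$ comes from that classical result.

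No step presents a serious obstacle: the reduction to $\X_d$ is essentially bookkeeping driven by the fact that $\K_\e$ is the preimage under $\pi$ of a set that ignores translations, and the quantitative $\e^\theta$-non-divergence input is standard and is already used elsewhere in the paper via \cite{KM1}. The only mildly delicate point is checking that a Euclidean ball in $\M_{m\times(n+1)}(\R)$ really does behave like a product ball up to constants depending only on $m,n$, which is the content of the sandwich inclusion above.
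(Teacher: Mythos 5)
Your proposal is correct and follows essentially the same route as the paper: observe that $\K_\e=\pi^{-1}(\K_{\e,d})$ forces the condition to depend only on the linear part $u'$ of $u$, use a Fubini/volume comparison to convert the Euclidean ball in $U$ to a ball in $U_d$, and then invoke the Kleinbock--Margulis non-divergence estimate on $\X_d$ with $L_d=\pi(L)$. The paper writes the volume comparison somewhat loosely via $|B|\approx r^{m(n+1)}$, $|\pi(B)|\approx r^{mn}$, and the $\R^m$-projection $\approx r^m$, whereas you make the containment explicit through the sandwich $B_{U_d}(r/\sqrt2)\times B_{\R^m}(r/\sqrt2)\subset B\subset B_{U_d}(r)\times B_{\R^m}(r)$; this is only a stylistic difference.
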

\begin{proof}
Let us define $B'= \pi(B) \subset U_d$ and $B''$ denote image of $B$ under the projection map from $U = U_d \ltimes (\R^m \times \{0\}^n) \rightarrow \R^m$. If $r$ is radius of ball $B$, then upto a constant, we have $|B| \approx r^{m(n+1)}$, $|B'| \approx r^{mn}$ and $|B''| \approx r^m$.Now, for every $x \in \X$ we have
\begin{align*}
    |\{ u \in B: a(\bar{s})ux \notin \K_{\epsilon}\}| 
    &= |\{ u \in B: a(\bar{s})ux \notin \pi^{-1}(\K_{\epsilon,d})\}| \\
    &= |\{ u \in B: \pi(a(\bar{s})ux) \notin \K_{\epsilon,d}\}| \\
    &= |\{ (u',v') \in U_d \times (R^m \times \{0\}^{n}):  (u',v') \in B, \ \ a'(\bar{s}).u'.\pi(x) \notin {\K_{\epsilon,d}}  \}|\\
    &\leq |\{u' \in B':  a'(\bar{s}).u'.\pi(x) \notin {\K_{\epsilon,d}}\}| |\{ v' \in \R^m \times \{0\}^n: v' \in B'' \times \{ 0\}^n  \}| \\ 
   &\leq |\{u' \in B':  a'(\bar{s}).u'.\pi(x) \notin {\K_{\epsilon,d}}\}| |B''| \\
    &\ll |\{u' \in B':  a'(\bar{s}).u'.\pi(x) \notin {\K_{\epsilon,d}}\}|. r^{m} \\
    &\ll \frac{ |\{u' \in B':  a'(\bar{s}).u'.\pi(x) \notin {\K_{\epsilon,d}}\}|}{ |B'|} |B| 
\end{align*}
for all $x \in \X$. Now using the fact that $L'= \pi(L)$ is a compact subset of $\X_d$, the result follows from the following Theorem. 
\begin{thm} [\cite{KM1}, Cor 3.4]
\label{hom lemma 2}
    There exists $\theta= \theta(m,n) >0$ such that for every compact $L' \subset \X$ and a Euclidean ball $ B' \subset U$ centered at the identity, there exists $T_0 >0$ such that for every $\epsilon \in (0,1)$, $x \in L$, and $\bar{s} \in S^+$ satisfying $\lfloor \bar{s}\rfloor \geq T_0$, one has 
\begin{equation*}
    |\{u' \in B':  a'(\bar{s}).u'.\pi(x) \notin {\K_{\epsilon,d}}\}| \ll \epsilon^{\theta} |B'|
\end{equation*}
\end{thm}
   
\end{proof}

With all the technical work done above, the following Theorems follow from the proofs of corresponding theorems in \cite{BG} along the same lines.  
\begin{thm}
    There exists  $\delta > 0$ such that for every compact $\Omega \subset U$, $f \in \C_c^{\infty}(U)$ with $supp(f) \subset \Omega$, $\phi_1, \ldots, \phi_r \in \C_c^{\infty}(\X)$, $x_0 \in \X$, and $t_1, \ldots,t_r >0 $, we have 
    \begin{equation*}
        \int_U f(u) \left( \prod_{i=1}^r \phi_i(a_{t_i} u x_0)\right) \, du = \left(\int_U f(u) \, du  \right) \prod_{i=1}^r \left( \int_{\X} \phi_i \,d\mu_{\X} \right) + \mathcal{O}_{x_0, \Omega, r} \left(e^{-\delta D(t_1, \ldots, t_r)} \|f\|_{C^k}  \prod_{i=1}^r \| \phi_i\|_{\C^k} \right)
    \end{equation*}
 where
 \begin{equation}
     D(t_1, \ldots, t_r) := \min\{t_i, |t_i - t_j|: 1 \leq i \neq j \leq r\}.
 \end{equation}
\end{thm}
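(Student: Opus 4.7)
The proof will proceed by induction on $r$, closely following the corresponding multiple-mixing argument in \cite{BG}. The base case $r=1$ is precisely the effective equidistribution of Theorem \ref{aff main lemma for mixing} (the same argument extending to the $a_t$ flow via the general form of Theorem \ref{SamuelE} and the analog of Theorem \ref{Aff Mixing}). For the inductive step, one relabels so that $t_1 \leq t_2 \leq \cdots \leq t_r$, sets $t_0 := 0$, so that $D = \min_{1 \leq k \leq r}(t_k - t_{k-1})$, and picks an index $k^*$ attaining this minimum.

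The key idea is to split the product at the critical gap. Define
\[
\Psi_L(y) := \prod_{i=1}^{k^*-1} \phi_i(a_{t_i - t_{k^*-1}}\,y), \qquad \Psi_R(y) := \prod_{i=k^*}^{r} \phi_i(a_{t_i - t_{k^*}}\,y),
\]
with $\Psi_L \equiv 1$ if $k^* = 1$. For $k^* \geq 2$ the integrand rewrites as $f(u)\Psi_L(a_{t_{k^*-1}}ux_0)\Psi_R(a_D(a_{t_{k^*-1}}ux_0))$, and I would apply effective equidistribution (Theorem \ref{aff main lemma for mixing}) to $G(y) := \Psi_L(y)\Psi_R(a_Dy)$ at time $t_{k^*-1}$ to decouple the $U$-integration. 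For $k^* = 1$ one applies equidistribution directly to $\Psi_R$ at time $t_1 = D$. In either case one then estimates $\int_\X G \, d\mu_\X$ by applying single mixing (Theorem \ref{Aff Mixing}) to the pair $(\Psi_L, \Psi_R \circ a_D)$, which yields the key decay factor $e^{-\gamma D}$ and factorises the integral as $\int_\X \Psi_L \, d\mu_\X \cdot \int_\X \Psi_R \, d\mu_\X$ plus a small error. Iterating single mixing on each of $\Psi_L$ and $\Psi_R$ extracts the remaining $\int_\X \phi_i \, d\mu_\X$, producing the main term.

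The principal technical obstacle is the bookkeeping of $C^k$-norms. By the bound \eqref{xi ineq 2}, quantities like $\|\Psi_L\|_{C^k}$, $\|\Psi_R\|_{C^k}$, $\|\Psi_R \circ a_D\|_{C^k}$ are bounded by products of the form $\prod_i e^{\xi(t_i - t_{k^*-1})}\|\phi_i\|_{C^k}$, which can be large. These must be offset by the decays $e^{-\gamma t_{k^*-1}}$ from the equidistribution step and $e^{-\gamma D}$ from the mixing step, by choosing the auxiliary scale $\rho$ in Theorem \ref{aff main lemma for mixing} as a power $\rho = e^{-\eta D}$ for a sufficiently small $\eta > 0$ (depending on $\xi$, $\gamma$, $c$, $k$, $r$), which yields a net decay $e^{-\delta D}$ for some $\delta > 0$. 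The injectivity-radius hypothesis $i(x) > 2\rho$ in Theorem \ref{aff main lemma for mixing} is enforced by excising from the integration those $u$ with $a_{t_{k^*-1}} u x_0 \notin \K_\epsilon$, where $\epsilon$ is a small power of $\rho$ chosen so that $\epsilon^{d+1} \gtrsim \rho$ via Proposition \ref{inj r aff}; by the non-divergence Theorem \ref{aff lemma 2} this excised set has measure $\ll \epsilon^\theta$, and its contribution is absorbed into the error term. Collecting all the estimates and hiding the $x_0$- and $\Omega$-dependent constants inside $\mathcal{O}_{x_0,\Omega,r}(\cdot)$ yields the asserted bound.
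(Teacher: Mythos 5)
There is a genuine gap in your proposed argument, and it is not merely a bookkeeping subtlety: the approach of iterating two–point mixing cannot yield the claimed estimate, and the paper does not take this route.

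The crux is that the $C^k$-norms of your composite functions grow exponentially in the \emph{total time span} of $\{t_1,\ldots,t_r\}$, not in $D$. Concretely, with $t_0=0$, $t_1\le\cdots\le t_r$ and $D=t_{k^*}-t_{k^*-1}$, your $\Psi_R\circ a_D$ involves $\phi_i\circ a_{t_i-t_{k^*-1}}$ for $i\ge k^*$, so by \eqref{xi ineq 2} one only has $\|\Psi_R\circ a_D\|_{C^k}\ll\prod_{i\ge k^*}e^{\xi(t_i-t_{k^*-1})}\|\phi_i\|_{C^k}$; the exponent $\sum_{i\ge k^*}(t_i-t_{k^*-1})\ge t_r-t_{k^*-1}$ is unbounded in terms of both $D$ and $t_{k^*-1}$. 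Hence the error in the mixing step $\int_\X\Psi_L(\Psi_R\circ a_D)\,d\mu_\X=(\int\Psi_L)(\int\Psi_R)+\mathcal{O}(e^{-\gamma D}\|\Psi_L\|_{C^k}\|\Psi_R\circ a_D\|_{C^k})$ is of size roughly $e^{-\gamma D+\xi(t_r-t_{k^*-1})}\prod\|\phi_i\|_{C^k}$, which is useless whenever $t_r-t_{k^*-1}\gg D$ (e.g.\ $t_i=iD$, or even sparser). The same defect already appears in the equidistribution step: $\|G\|_{C^k}$ and $\|G\|_{\mathrm{Lip}}$ carry the same uncontrolled factor, and the decays $e^{-\gamma t_{k^*-1}}$ and $\rho=e^{-\eta D}$ cannot offset a quantity that scales with $t_r$. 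No choice of $\eta$, even one depending on $r,\xi,\gamma,c,k$, yields an error of the advertised form $\mathcal{O}_r(e^{-\delta D}\|f\|_{C^k}\prod_i\|\phi_i\|_{C^k})$.

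This is precisely why one cannot, in general, upgrade exponential two-mixing to exponential $r$-mixing by induction for a rank-one flow. The paper's own proof (which it defers verbatim to the corresponding argument in \cite{BG}) does \emph{not} iterate Theorem \ref{Aff Mixing}. Instead it invokes the quantitative \emph{multiple} mixing theorem of \cite{BG3} (= Björklund–Einsiedler–Gorodnik) as a black box — a genuine theorem giving the $r$-point correlation bound $\int_\X\prod_i\phi_i(a_{t_i}x)\,d\mu_\X=\prod_i\int\phi_i+\mathcal{O}(e^{-\delta\min_{i\ne j}|t_i-t_j|}\prod_i\|\phi_i\|_{C^k})$ with the product $\prod_i\|\phi_i\|_{C^k}$ appearing \emph{directly} — and then couples it with a single application of the effective equidistribution Theorem \ref{aff main lemma for mixing} (via a partition of unity and Theorem \ref{aff lemma 2} to handle low-injectivity-radius centres, as you correctly describe). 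The BEG argument for multiple mixing is not an iteration of two-mixing; it exploits the higher-rank diagonal group (the cone $S^+$, which is why the paper introduces $a(\bar s)$ and $\lfloor\bar s\rfloor$) and spectral theory on $\X^r$. Your sketch collapses that theorem into a routine induction, which is exactly the step that does not hold.
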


\begin{cor}
    There exists  $\delta' > 0$ such that for $\phi_0 \in \C_c^{\infty}(\Y )$, $\phi_1, \ldots, \phi_r \in \C_c^{\infty}(\X)$ and $t_1, \ldots,t_r >0 $, we have 
    \begin{align*}
        \int_{\Y} \phi_0(y) \left( \prod_{i=1}^r \phi(a_{t_i} y)\right) \, d\mu_{\Y} &= \left(\int_{\Y} \phi_0 \, d\mu_{\Y}  \right) \prod_{i=1}^r \left( \int_{\X} \phi(a_{t_i} u x_0) \,d\mu_{\X} \right) \\  &+ \mathcal{O}_{x_0, \Omega, r} \left(e^{-\delta D(t_1, \ldots, t_r)} \|f\|_{C^k}  \prod_{i=1}^r \|\phi_i\|_{\C_k} \right)
    \end{align*}
\end{cor}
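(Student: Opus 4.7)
\emph{Proof plan.} The plan is to derive this Corollary from the preceding Theorem by rewriting the integral over $\Y$ against $\phi_0$ as an integral over $U$ against a suitable $f \in \Cc(U)$. Recall from Section~\ref{Aff Intro} that $\Y = U\Z^{m+n}$ is identified with $U/(U\cap\Gamma)$, where $U\cap\Gamma$ is a cocompact discrete subgroup of $U \simeq \R^{m(n+1)}$, and that $\mu_{\Y}$ is the pushforward of Lebesgue measure under $u \mapsto u\Z^{m+n}$.

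First I will fix, once and for all, a non-negative $\psi \in \Cc(U)$ satisfying $\sum_{\gamma \in U\cap\Gamma} \psi(u\gamma) = 1$ for every $u \in U$; such a smooth tile of $U$ over $U\cap\Gamma$ exists by a routine partition-of-unity construction on the compact quotient $U/(U\cap\Gamma)$. Setting $x_0 := \Z^{m+n} \in \X$ and $f(u) := \psi(u)\,\phi_0(u x_0)$, one has $f \in \Cc(U)$ supported in the fixed compact set $\Omega := \mathrm{supp}(\psi)$. Since $\gamma x_0 = x_0$ for every $\gamma \in U\cap\Gamma$, unfolding against the lattice $U\cap\Gamma$ gives simultaneously
\[
\int_U f(u)\,du \;=\; \int_{\Y} \phi_0 \, d\mu_{\Y}, \qquad \int_U f(u)\prod_{i=1}^r \phi_i(a_{t_i} u x_0)\,du \;=\; \int_{\Y} \phi_0(y) \prod_{i=1}^r \phi_i(a_{t_i} y)\, d\mu_{\Y}(y).
\]

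Next I apply the preceding Theorem to $f$, $x_0$, and $\phi_1,\ldots,\phi_r$. The main term produced is exactly $\bigl(\int_{\Y}\phi_0\,d\mu_{\Y}\bigr)\prod_{i=1}^r\bigl(\int_{\X}\phi_i\,d\mu_{\X}\bigr)$, and its error term is $\Oo_{x_0,\Omega,r}\bigl(e^{-\delta D(t_1,\ldots,t_r)}\|f\|_{\C^k}\prod_{i=1}^r\|\phi_i\|_{\C^k}\bigr)$. Since $\psi$ is fixed, the Leibniz rule gives $\|f\|_{\C^k} \ll_{\psi} \|\phi_0\|_{\C^k}$ (with $\phi_0$ pulled back along $u \mapsto u x_0$), and the $\psi$-dependent constant is absorbed into the implicit constant in the $\Oo$ notation; one may then take $\delta' := \delta$. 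No substantive obstacle arises, as the Corollary is essentially a bookkeeping consequence of the multiple-mixing theorem, the only auxiliary ingredient being the smooth tile $\psi$.
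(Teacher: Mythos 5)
Your proof is correct, but it takes a genuinely cleaner route than the one the paper (implicitly, via the congruence-case analogue) uses. The paper's argument, which follows Bj\"orklund--Gorodnik, sets $f_0 = \tilde\phi_0\cdot\chi$ where $\chi$ is the \emph{characteristic function} of a fundamental domain of $U\cap\Gamma$ in $U$; since $f_0$ is not smooth, one must introduce a one-parameter smooth approximation $\chi_\e$ with $\|\chi_\e\|_{C^k}\ll\e^{-k}$, apply the Theorem to $f_\e=\tilde\phi_0\chi_\e$, and then optimize over $\e$ (choosing $\e=e^{-\delta D/(k+1)}$), which yields the weaker exponent $\delta'=\delta/(k+1)$. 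You sidestep the entire approximation-and-optimization step by using a \emph{smooth} tile $\psi$ with $\sum_{\gamma\in U\cap\Gamma}\psi(u\gamma)\equiv 1$ — a standard partition of unity subordinate to the cocompact lattice $U\cap\Gamma\subset U\cong\R^{m(n+1)}$ — so that $f=\psi\,\tilde\phi_0$ is already in $\Cc(U)$ and the unfolding identities hold exactly. The Theorem then applies directly, the $\psi$-dependence is absorbed into the implied constant, and you obtain the clean bound with $\delta'=\delta$. Both arguments rely on the same two facts (unfolding against the $(U\cap\Gamma)$-periodic integrand since $\gamma x_0=x_0$, and $\|\psi\tilde\phi_0\|_{C^k}\ll_\psi\|\phi_0\|_{C^k}$ because the covering $U\to\Y$ is a local isometry), so the ingredients are identical; your variant merely pays attention to where the non-smoothness actually enters and avoids it at the source.
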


\begin{thm}
    Let $a= (diag(a_1, \ldots, a_{m+n}), 0)$ where 
    $$a_1,\ldots, a_m >1,\ \  0< a_{m+1}, \ldots , a_{m+n}<1 ,\ \  \text{and} \ \  a_1 \ldots a_{m+n}=1.$$
    Then $a$ defines a continuous self-map of the space $\X$, which preserves $\mu_{\X}$. Then for $\phi \in \C_c^{\infty}(\X)$ and $\xi \in \R$, we have
    \begin{equation*}
        \mu_{\Y}(\{y \in \Y: \frac{1}{\sqrt{N}} \sum_{s=0}^{N-1}(\phi \circ a^s - \mu_{\Y}(\phi \circ a^s))(y) < \xi\}) \rightarrow Norm_{\sigma_{\phi}(\xi)}
    \end{equation*}
    as $N \rightarrow \infty$, where
    \begin{equation*}
    \sigma_{\phi}^2 := \sum_{s= -\infty}^{\infty} \left( \int_{\X} (\phi \circ a^s) \phi \, d\mu_{\X} - \mu_{\X}(\phi)^2\right)
    \end{equation*}
\end{thm}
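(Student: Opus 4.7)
The statement is a discrete-time central limit theorem for a hyperbolic toral map lifted to $\X$, averaged over the embedded torus $\Y$, and I would prove it by the method of cumulants, exactly as in \cite{BG}. Replace $\phi$ by $\phi_s := \phi\circ a^s - \mu_{\Y}(\phi \circ a^s)$ so that $\mu_{\Y}(\phi_s)=0$. It suffices to show that the $r$-th cumulant $\kappa_r^{\mu_{\Y}}\bigl(N^{-1/2}\sum_{s=0}^{N-1}\phi_s\bigr)$ converges to $0$ for every fixed $r\geq 3$ and that $\kappa_2^{\mu_\Y}\bigl(N^{-1/2}\sum_{s=0}^{N-1}\phi_s\bigr)\to\sigma_\phi^2$; this then yields convergence in distribution to a centered Gaussian with variance $\sigma_\phi^2$ by the Carleman/Marcinkiewicz criterion.

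\textbf{Cumulant bounds via multiple mixing.} By multi-linearity, $\kappa_r^{\mu_\Y}\bigl(\sum_s\phi_s\bigr)=\sum_{(s_1,\dots,s_r)\in\{0,\dots,N-1\}^r}\kappa_r^{\mu_{\Y}}(\phi_{s_1},\dots,\phi_{s_r})$, and the mixed cumulant is a signed sum (over set partitions of $\{1,\dots,r\}$) of products of joint moments $\int_\Y \prod_{i\in I}\phi_{s_i}\,d\mu_\Y$. Observe that $a^s$ is of the form $a_{\tau s}$ for an appropriate parametrization, so each joint moment is exactly of the form treated by the corollary preceding the statement, with times $t_i=\tau s_i$. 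That corollary gives a bound of the type $\prod_i\mu_\Y(\phi_{s_i})+O(e^{-\delta'D(\tau s_1,\dots,\tau s_r)}\|\phi\|_{C^k}^r)$. Plugging these asymptotics into the partition expansion, the ``main'' terms cancel by the defining property of cumulants (the cumulant of independent random variables vanishes), leaving only the error terms, so that
\begin{equation*}
\bigl|\kappa_r^{\mu_{\Y}}(\phi_{s_1},\dots,\phi_{s_r})\bigr|\ll_{r,\phi} e^{-\delta'\tau D(s_1,\dots,s_r)}.
\end{equation*}
Summing over $(s_1,\dots,s_r)\in\{0,\dots,N-1\}^r$ and using that the set on which $D(s_1,\dots,s_r)\leq L$ has cardinality $O_r(NL^{r-1})$ (as in Lemma 4.4 of \cite{BG}), the exponential decay gives $\sum_{\bar s}e^{-\delta'\tau D(\bar s)}=O_r(N)$. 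Hence $\kappa_r^{\mu_\Y}\bigl(N^{-1/2}\sum_{s=0}^{N-1}\phi_s\bigr)=N^{-r/2}\cdot O_r(N)=O_r(N^{1-r/2})\to 0$ for $r\geq 3$.

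\textbf{Variance.} For $r=2$ the same argument yields
\begin{equation*}
\kappa_2^{\mu_\Y}\Bigl(N^{-1/2}\sum_{s=0}^{N-1}\phi_s\Bigr)=\frac{1}{N}\sum_{s_1,s_2=0}^{N-1}\Bigl(\int_\X(\phi\circ a^{s_1})(\phi\circ a^{s_2})\,d\mu_\X-\mu_\X(\phi)^2\Bigr)+o(1),
\end{equation*}
where passing from $\mu_\Y$ to $\mu_\X$ for the two-point moment uses the $r=2$ case of the corollary together with equidistribution of $\Y$ in $\X$ (Theorem~\ref{Aff Mixing} applied to $\phi_0=1$). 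Grouping by $s=s_1-s_2$ and invoking Theorem~\ref{Aff Mixing} again to ensure absolute summability of the series, a standard Ces\`aro argument gives the limit $\sigma_\phi^2=\sum_{s\in\Z}\bigl(\int_\X(\phi\circ a^s)\phi\,d\mu_\X-\mu_\X(\phi)^2\bigr)$.

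\textbf{Main obstacle.} The genuine technical point is justifying the transfer from $\mu_\Y$ to $\mu_\X$ inside the joint moments when some of the times $s_i$ are small (so that the mixing error from the corollary does not beat the trivial $O(1)$ bound), and then ensuring that the combinatorial cancellation in the cumulant expansion is robust enough to absorb these non-uniform contributions; this is exactly the point where one needs the full strength of the corollary with the $D(t_1,\dots,t_r)$ quantity (which penalizes only \emph{pairwise} closeness among the times) rather than a cruder single-gap bound. Once this combinatorial bookkeeping is set up as in Section~4 of \cite{BG}, the CLT follows.
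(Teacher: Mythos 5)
Your overall strategy — the cumulant method with the multiple mixing estimate from the preceding corollary — is exactly the route the paper takes (the paper simply defers to the proof of the corresponding Theorem 3.1 of \cite{BG}). However, the quantitative heart of your argument contains a genuine error.

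You claim that $\bigl|\kappa_r^{\mu_{\Y}}(\phi_{s_1},\dots,\phi_{s_r})\bigr|\ll e^{-\delta'\tau D(s_1,\dots,s_r)}$ and that the set $\{\,\bar s\in\{0,\dots,N-1\}^r:\ D(\bar s)\le L\,\}$ has cardinality $O_r(NL^{r-1})$, concluding $\sum_{\bar s}e^{-\delta'\tau D(\bar s)}=O_r(N)$. The counting claim is false for $r\ge 3$: since $D(\bar s)\le L$ requires only that \emph{one} coordinate or \emph{one} pairwise gap be at most $L$, this set in fact has cardinality $\asymp N^{r-1}L$. Consequently $\sum_{\bar s}e^{-\delta'\tau D(\bar s)}\asymp N^{r-1}$, and $N^{-r/2}\cdot N^{r-1}=N^{r/2-1}\to\infty$ for $r\ge 3$; so the argument as written does not give $\kappa_r\to 0$.

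The crude bound $|\kappa_r|\ll e^{-\delta D(\bar s)}$ wastes exactly the cancellation you invoke. The actual mechanism in \cite{BG} is finer: the joint cumulant of $(\phi_{s_1},\dots,\phi_{s_r})$ is small whenever \emph{some nontrivial partition} $\mathcal{Q}$ of $\{1,\dots,r\}$ has blocks whose time parameters are mutually $L$-separated (and bounded away from $0$), because then the joint moments approximately factorize across $\mathcal{Q}$ and cumulants vanish on products. The tuples on which \emph{no} nontrivial partition is $L$-separated — i.e., the truly ``clustered'' tuples — do number $O_r(NL^{r-1})$, and \emph{that} is the set to which your $O_r(NL^{r-1})$ count should apply. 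With $L\asymp\log N$, the clustered tuples contribute $O\bigl(N(\log N)^{r-1}\bigr)$ using the trivial $O(1)$ cumulant bound, while the remaining $O(N^r)$ tuples contribute $O(N^{r}e^{-\delta L})=O(N^{r-c})$ for a controllable $c$; only this two-regime decomposition, together with the partition-refined decay quantity (denoted $D^{\mathcal Q}$ in \cite{BG}), gives $o(N^{r/2})$. You acknowledge the bookkeeping in your final paragraph, but the displayed bound and count you write down are not the ones that close the argument.
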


\subsection{Siegel Transforms} \label{Aff Siegel} \vspace{0.3in} \hfill \\

This section is devoted to a discussion of Siegel transforms. 
\subsubsection{Properties of Siegel Transforms} \hfill \\
  Given $f: \R^{m+n} \rightarrow \R$, we define its \textit{Siegel transform} $\hat{f}: \X \rightarrow \R$ by $$ \hat{f}(\Lambda):= \sum_{z \in \Lambda \setminus \{0\}} f(z), \text{   for } \Lambda \in \X.$$

Let $s \in \N$. Consider $\X_s$, the space of all unimodular lattices in $\R^s$, equipped with unique $G_s= SL_s(\R)$ invariant probability measure $\mu_{\X_s}$. Given a lattice $\Lambda$ in $\R^s$, we say that a subspace $V$ of $\R^s$ is $\Lambda$-rational if the intersection $V \cap \Lambda$ is a lattice in $V$. If $V$ is $\Lambda$-rational, we denote by $d_{\Lambda}(V)$ the volume of $V/ (V \cap \Lambda)$, and define
\begin{equation}
\label{def alpha_d}
    \alpha_s(\Lambda) =sup\{d_{\Lambda}(V)^{-1} : \text{  V is $\Lambda$-rational subspace of $\R^s$}\}.
\end{equation}
It follows from Mahler's Compactness Criterion that $\alpha_s$ is a proper function on $\X_s.$

Now, define the function $\alpha : \X  \rightarrow \R$ as 
 \begin{align}
  \alpha(x) &= \alpha_{d+1}(h(x)) = \alpha_{d}(\pi(x)),
\end{align}
where $\alpha_s$ is defined in \eqref{def alpha_d}. Note that the definition makes sense because of the fact that 
\begin{equation}
    \alpha_{d+1}\left( \begin{pmatrix}
        A & v \\ 0 & 1
    \end{pmatrix} \Z^{d+1} \right) = \alpha_d(A\Z^d) \text{,  } A \in SL_{d}(\R), v \in \R^d.
\end{equation}
This is easy to observe. Assume that $\Lambda = \begin{pmatrix}
        A & v \\ 0 & 1
    \end{pmatrix} \Z^{d+1} $ and $\Lambda'= A\Z^d$. Suppose $V \subset \R^d$ is $\Lambda'$-rational, then $V \times \{0\}$ is $\Lambda$-rational. It is clear that $d_{\Lambda'}(V)= d_{\Lambda}(V \times \{0\})$, which gives that $\alpha_{d+1}(\Lambda) \geq \alpha_d(\Lambda).$  Now, if $W$ is $\Lambda$-rational, then we have two case. In first case, $W \subset \R^d \times \{0\}$, for which we immediately have $d_{\Lambda}(W)^{-1} \leq \alpha_d(\Lambda). $ Otherwise, we can write $W$ as $W= (W' \times \{0\}) \oplus \R(v,r)$ where $W'$ is $\Lambda'$-rational subspace and $(v,r) \in \Lambda$ with $r \in \Z \setminus \{0\}. $ It is again clear that $d_\Lambda(W)= d_{\Lambda'}(W'). |r|$. Since $|r| \geq 1$, we get that $d_{\Lambda}(W)^{-1} \leq d_{\Lambda'}(W')^{-1}\leq \alpha_d(\Lambda). $ This gives that $\alpha_{d+1}(\Lambda) \leq \alpha_d(\Lambda)$, and hence $\alpha_{d+1}(\Lambda) = \alpha_d(\Lambda).$
Now we have the following Proposition.

\begin{prop} 
\label{aff alpha 1 }
    If $f: \R^{m+n} \rightarrow \R$ is a bounded function with compact support, then 
    \begin{equation*}
        |\hat{f}| \ll_{supp(f)} \|f\|_{C^0} \alpha(\Lambda) \text{    for all } \Lambda \in \X.
    \end{equation*}
\end{prop}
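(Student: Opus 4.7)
The plan is to reduce the bound on the affine Siegel transform $\hat f$ to the classical Schmidt bound for Siegel transforms on the space $\X_{d+1}$ of unimodular lattices in $\R^{d+1}$, by exploiting the embedding $h \colon \X \hookrightarrow \X_{d+1}$ introduced in Section \ref{Aff Intro} together with the identity $\alpha(\Lambda) = \alpha_{d+1}(h(\Lambda))$ verified immediately before the statement. The key geometric point is that the affine lattice $\Lambda \subset \R^d$ sits inside the unimodular lattice $h(\Lambda) \subset \R^{d+1}$ as its ``slice at height one,'' which I will single out using a bump function in the last coordinate.

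More explicitly, writing $\Lambda = A\Z^d + v$ for some $(A,v) \in G$, the embedding gives
\begin{equation*}
h(\Lambda) \;=\; \begin{pmatrix} A & v \\ 0 & 1 \end{pmatrix} \Z^{d+1} \;=\; \bigsqcup_{k \in \Z}\,(A\Z^d + kv) \times \{k\} \;\subset\; \R^{d+1},
\end{equation*}
so that $h(\Lambda) \cap (\R^d \times \{1\}) = \Lambda \times \{1\}$. Fix once and for all a non-negative bump $\chi \in \C_c^{\infty}(\R)$ with $\chi(1) = 1$ and $\mathrm{supp}(\chi) \subset [1/2, 3/2]$, so that $\chi$ vanishes at every integer other than $1$, and define $F \colon \R^{d+1} \to \R$ by $F(z, t) := |f(z)|\,\chi(t)$. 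Then $F$ is bounded with $\|F\|_\infty \leq \|\chi\|_\infty \|f\|_{C^0}$ and $\mathrm{supp}(F) \subset \mathrm{supp}(f) \times [1/2, 3/2]$. Since $\chi$ picks out only the slice $k = 1$, and every point of the form $(z,1)$ is automatically nonzero in $\R^{d+1}$, a direct computation yields
\begin{equation*}
\hat F(h(\Lambda)) \;=\; \sum_{w \in h(\Lambda) \setminus \{0\}} F(w) \;=\; \chi(1) \sum_{z \in \Lambda} |f(z)| \;=\; \sum_{z \in \Lambda} |f(z)|.
\end{equation*}

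Finally, invoking the classical Schmidt bound on $\X_{d+1}$, namely $|\hat F(L)| \ll_{\mathrm{supp}(F)} \|F\|_\infty\, \alpha_{d+1}(L)$ for every unimodular $L \in \X_{d+1}$, together with $\alpha_{d+1}(h(\Lambda)) = \alpha(\Lambda)$, I conclude
\begin{equation*}
|\hat f(\Lambda)| \;\leq\; \sum_{z \in \Lambda \setminus \{0\}} |f(z)| \;\leq\; \hat F(h(\Lambda)) \;\ll_{\mathrm{supp}(f)}\; \|f\|_{C^0}\, \alpha(\Lambda),
\end{equation*}
which is the desired bound. The whole argument is essentially bookkeeping once the classical Schmidt bound on $\X_{d+1}$ and the identification $\alpha = \alpha_{d+1} \circ h$ are in hand, and I do not foresee any real obstacle beyond the choice of the auxiliary bump $\chi$ that isolates the slice at $t = 1$.
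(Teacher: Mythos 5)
Your proposal is correct and follows essentially the same route as the paper: embed $\X$ into $\X_{d+1}$ via $h$, lift $|f|$ to a function on $\R^{d+1}$ supported near the slice $\{t=1\}$ so that its Siegel transform on $\X_{d+1}$ dominates $|\hat f|$, and then invoke Schmidt's bound together with the identity $\alpha = \alpha_{d+1}\circ h$. The only cosmetic difference is the choice of cutoff in the last coordinate — the paper multiplies by the indicator of $\{1\}$ while you use a smooth bump $\chi$ supported in $[1/2,3/2]$; since Schmidt's bound only needs boundedness and compact support, both work equally well.
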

\begin{proof}
    Without loss of generality, we may assume that $f$ is a non-negative function, otherwise replace $f$ by $|f|$. Let us define $\Tilde{f} : \R^{m+n+1} \rightarrow \R$ as $$\Tilde{f}(x,y) = \begin{cases} f(x), \text{y=1} \\ 0, \text{otherwise}\end{cases}. $$ Consider the usual Siegel transform $\hat{\Tilde{f}} : {\X}_{d+1} \rightarrow \R$, i.e., $\hat{\Tilde{f}}(\Lambda) = \sum_{v \in \Lambda \setminus \{0\}} \Tilde{f}(v)$. Note that if $\Lambda = \Lambda' +v = A\Z^d + v \in \X$ is an affine lattice in $\R^d$, then the lattice $h(\Lambda) \in \X_{d+1}$ is given by 
 \begin{align*}
 h(\Lambda) &= h((A,v) \Z^d) \\
            &= \begin{pmatrix}
                A & v \\ 0 & 1  
            \end{pmatrix} \Z^{d+1} \\ 
            &= \{(x+ rv,r) \in \R^{d+1}: x \in A\Z^d  , r\in \Z \} \\
            &= \{(x + rv,r) \in \R^{d+1}: x \in \Lambda' , r\in \Z \}. 
 \end{align*}
From this description, it is now clear that $|\hat{\Tilde{f}}(h(x))| = |\hat{f}(x)| + |f(0)| \geq |\hat{f}(x)| $ (since $f$ is a non-negative function). Now, we use the following Proposition.
\begin{prop}[\cite{SW}, Lem. 2]
\label{alpha 1}
If $\phi: \R^{d+1} \rightarrow \R$ is a bounded function with compact support, then $$|\hat{\phi}(\Lambda)| \ll_{supp(\phi)} \|\phi\|_{C^0} \alpha_{d+1}(\Lambda) \text{  for all } \Lambda \in \X_{d+1},$$
where $\hat \phi$ denote usual Siegel transform of $\phi$ on $\X_{d+1}$, the space of all uni-modular lattices in $\R^{d+1}$.
\end{prop}

From Lemma \ref{alpha 1}, we have $|\hat{\Tilde{f}}(h(\Lambda))| \underset{supp(\Tilde{f})}{\ll} \|\Tilde{f}\|_{C^0} \alpha_{d+1}(h(\Lambda)) = \|{f}\|_{C^0} \alpha(\Lambda) $ for all $\Lambda \in \X$. Hence, the result follows.
\end{proof}

Let us recall the following well known result in the Geometry of Numbers (see e.g. [\cite{EM}, Lemma 3.10]).
\begin{prop}\label{usedinaffalpha2}
    \label{alpha 2}
    $\alpha_d \in L^p(\X_d)$ for $1 \leq p < d$. In particular, $$\mu_{\X_d}(\{ \alpha_d \geq L\}) \ll_p L^{-p} \text{  for all }  p <d.$$ 
\end{prop}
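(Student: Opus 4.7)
The plan is to derive the tail bound from $L^p$-integrability of $\alpha_d$ via Markov's inequality: since
\[
\mu_{\X_d}(\{\alpha_d \geq L\}) \;\leq\; L^{-p} \int_{\X_d} \alpha_d^p \, d\mu_{\X_d},
\]
it suffices to prove $\alpha_d \in L^p(\X_d)$ for every $p < d$. To estimate the integral, I would pass to a Siegel fundamental domain $\mathcal{S} \subset \SL_d(\R)$ for $\SL_d(\Z)$ and work in Iwasawa coordinates $g = kan \in \mathcal{S}$ with $a = \operatorname{diag}(a_1, \ldots, a_d)$, $a_1 \geq \cdots \geq a_d > 0$, $\prod_i a_i = 1$.

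The first step is to express $\alpha_d$ explicitly in these coordinates. By Minkowski reduction, the successive minima of $\Lambda = g\Z^d$ satisfy $\mu_i(\Lambda) \asymp a_{d-i+1}$ uniformly on $\mathcal{S}$ (the $K$- and $N$-parts being bounded). Applying Minkowski's second theorem to $\Lambda \cap V$ for $k$-dimensional rational subspaces $V$, one verifies that
\[
\alpha_d^{(k)}(\Lambda) \;:=\; \sup\{\, d_\Lambda(V)^{-1} : \dim V = k,\ V\ \Lambda\text{-rational}\,\} \;\asymp\; (\mu_1 \cdots \mu_k)^{-1},
\]
so $\alpha_d \asymp \max_{1 \leq k \leq d-1} (a_{d-k+1} \cdots a_d)^{-1}$ and it suffices to bound each $\alpha_d^{(k)}$ in $L^p$ separately. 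Switching to simple-root coordinates $s_j := \log(a_j/a_{j+1}) \geq 0$ for $1 \leq j \leq d-1$, the Haar measure on $\mathcal{S}$ factorizes as $dk \cdot \prod_j e^{-j(d-j) s_j} ds_j \cdot dn$ (the modular factor of the Borel), while each $\alpha_d^{(k)}$ becomes $\exp(\phi_k(s))$ for an explicit positive linear form $\phi_k$ in $s$.

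The integrability question thus reduces to convergence of
\[
\int_{[0,\infty)^{d-1}} \exp\!\left( p\,\phi_k(s) - \sum_{j=1}^{d-1} j(d-j)\,s_j \right) ds_1 \cdots ds_{d-1},
\]
and the key point is to check that the coefficient of each $s_j$ in the exponent is strictly negative precisely when $p < d$. A direct computation (using $\sum_i t_i = 0$ with $t_i = \log a_i$ to rewrite $t_1 + \cdots + t_{d-k}$ in terms of the $s_j$'s) shows that the binding constraint comes from the simple root $j = d-k$ and reads $p < d$, uniformly in $k$. The integral then converges for every $p < d$, giving $\alpha_d \in L^p$, and Markov's inequality completes the proof. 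The main obstacle is the bookkeeping: verifying both that $\alpha_d^{(k)} \asymp (\mu_1 \cdots \mu_k)^{-1}$ throughout the whole Siegel domain $\mathcal{S}$ (and not merely on the exact Minkowski fundamental region) and that the critical exponent $p = d$ arises from the same simple root for every choice of $k$; once those points are settled, the rest is routine reduction theory.
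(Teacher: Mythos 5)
The paper does not actually prove this proposition; it simply cites it as a well-known result in the geometry of numbers (pointing to \cite{EM}, Lemma 3.10). Your proposal therefore supplies a proof where the paper offers none, and the argument you outline is the standard reduction-theoretic one. It is correct in substance: Markov's inequality reduces the tail bound to $L^p$-integrability; Minkowski's second theorem gives $\alpha_d^{(k)}\asymp(\mu_1\cdots\mu_k)^{-1}$ for an arbitrary lattice, and on a Siegel domain (where the $K$- and $N$-parts are bounded, so $g\Z^d$ is a bounded perturbation of $a\Z^d$) one has $\mu_i\asymp a_{d+1-i}$; writing $t_i=\log a_i$ with $\sum_i t_i=0$, the exponent of $\alpha_d^{(k)}$ is $t_1+\cdots+t_{d-k}$, whose coefficient in the simple-root coordinate $s_j$ is $jk/d$ for $j\le d-k$ and $(d-k)(d-j)/d$ for $j>d-k$; comparing with the Haar density $\prod_j e^{-j(d-j)s_j}$, the binding constraint is indeed $j=d-k$ and reads $p<d$ for every $k$, so each $\alpha_d^{(k)}$ and hence $\alpha_d=\max_k\alpha_d^{(k)}$ is in $L^p$ for $p<d$. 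Two small points of hygiene: the Siegel set is not literally a fundamental domain but only contains one, which is harmless here since the integrand is nonnegative; and your stated coordinates $g=kan$ with $a_1\ge\cdots\ge a_d$ should pair with the $NAK$ form of the Iwasawa decomposition rather than $KAN$ (the Haar density $a^{-2\rho}=\prod_j e^{-j(d-j)s_j}$ you wrote is the $NAK$ one for that ordering of $a$; in $KAN$ with $a_1\ge\cdots\ge a_d$ the exponent flips sign and the Siegel domain would sit on the other side of the Weyl chamber). Neither of these affects the conclusion.
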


We will use it to prove

\begin{prop}
\label{aff alpha 2}
    $\alpha \in L^p(\X)$ for $1 \leq p < m+n$. In particular, $\mu_{\X}(\{\alpha \geq L\}) \ll_p L^{-p}$ for all $p < m+n$. 
\end{prop}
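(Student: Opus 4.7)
The plan is to reduce the statement directly to Proposition \ref{alpha 2} using the two facts established earlier in Section \ref{Aff Intro}: (i) the identification $\alpha(x) = \alpha_d(\pi(x))$ for all $x \in \X$, which was verified in the discussion immediately following the definition of $\alpha$; and (ii) the fact that the projection $\pi : (\X, \mu_{\X}) \to (\X_d, \mu_{\X_d})$ is measure-preserving, which was established at the beginning of Section \ref{Aff Intro} by invoking uniqueness of the $G_d$-invariant probability measure on $\X_d$.

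Concretely, I would argue as follows. For any $p \geq 1$, by the change of variables formula for pushforward measures and since $\pi_*\mu_{\X} = \mu_{\X_d}$,
\begin{equation*}
    \int_{\X} \alpha(x)^p \, d\mu_{\X}(x) = \int_{\X} \alpha_d(\pi(x))^p \, d\mu_{\X}(x) = \int_{\X_d} \alpha_d(y)^p \, d\mu_{\X_d}(y),
\end{equation*}
which is finite for $p < d = m+n$ by Proposition \ref{alpha 2}. Likewise, the level set identity
\begin{equation*}
    \{x \in \X : \alpha(x) \geq L\} = \pi^{-1}(\{y \in \X_d : \alpha_d(y) \geq L\})
\end{equation*}
together with measure-preservation of $\pi$ yields $\mu_{\X}(\{\alpha \geq L\}) = \mu_{\X_d}(\{\alpha_d \geq L\})$, and the tail bound then follows immediately from the corresponding bound on $\X_d$ in Proposition \ref{alpha 2}.

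There is no real obstacle here: all the work is contained in the identity $\alpha = \alpha_d \circ \pi$ (already verified in the text through a case analysis on whether a $\Lambda$-rational subspace $W$ of $\R^{d+1}$ lies in $\R^d \times \{0\}$ or not) and in the measure-preservation of $\pi$. Note that a priori one might be tempted to use $\alpha = \alpha_{d+1} \circ h$ instead and invoke Proposition \ref{alpha 2} with $d+1$, but this would only give integrability for $p < m+n+1$ provided $h_*\mu_{\X}$ were absolutely continuous with respect to $\mu_{\X_{d+1}}$, which is false (the image of $h$ is a lower-dimensional submanifold); the $\alpha_d \circ \pi$ description is the right one to use, and it limits the range of integrability to $p < m+n$, matching the claimed statement.
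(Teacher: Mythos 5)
Your proof is correct and follows essentially the same route as the paper's: reduce to Proposition \ref{alpha 2} on $\X_d$ via the identity $\alpha = \alpha_d \circ \pi$ together with the measure-preservation of $\pi$. The extra remark on why the embedding $h$ into $\X_{d+1}$ would not yield the bound is a sensible sanity check, but the core argument matches the text.
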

\begin{proof}
    Since, the map $\pi: \X \rightarrow \X_d$ is measure preserving and $\alpha = \alpha_{d} \circ \pi$, we get that $\alpha \in L^p(\X)$ if and only if $\alpha_{d} \in L^p(\X_d)$. But, the latter follows from Proposition \ref{usedinaffalpha2}. The second part now follows from first one. 
\end{proof}

We now state an analogue of Rogers's higher moment formula for affine lattices.
\begin{prop}
\label{aff S R}
 If $f\in L^1(\R^d) \cap L^2(\R^d)$ , then
\begin{align}
\label{Af Se1}
    \int_{\X} \hat{f}(\Lambda) \, d\mu_{\X}(\Lambda) &= \int_{\R^{d}}f \\
\label{Af Rog1}
    \int_{\X} (\hat{f}(\Lambda))^2 \, d\mu_{\X}(\Lambda) &= \left(\int_{\R^{d}}f \right)^2 + \int_{\R^{m+n}} f^2
\end{align}

\end{prop}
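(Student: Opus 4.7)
The plan is to unfold the integral over $\X \simeq G/\Gamma$ using the explicit fundamental domain $\{(A,v): A \in \mathcal{F},\ v \in A[0,1)^d\}$ (where $\mathcal{F}$ is a fundamental domain for $\SL_d(\Z)$ in $\SL_d(\R)$) against the Haar measure $dm_{G_d}(A)\,dv$, and then to reduce everything to the classical Siegel formula on $\X_d$.

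For \eqref{Af Se1}, I would write
\[
\int_\X \hat f\, d\mu_\X = \int_{\mathcal F} \int_{A[0,1)^d} \sum_{m \in \Z^d} f(Am + v)\, dv\, dm_{G_d}(A),
\]
noting that the condition $Am+v\ne 0$ is vacuous outside a null set of $v$. As $A[0,1)^d$ is a fundamental domain for $A\Z^d \curvearrowright \R^d$ and the inner summand is periodic, the sum-integral unfolds to $\int_{\R^d} f(z)\, dz$, and since $m_{G_d}(\mathcal{F})=1$, we obtain $\int_{\R^d} f$. For \eqref{Af Rog1}, expand the square and parametrize the double sum by $k = m_2 - m_1 \in \Z^d$. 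For each fixed $k$, the same unfolding yields
\[
\int_{\mathcal F}\int_{\R^d} f(z)f(z+Ak)\,dz\,dm_{G_d}(A) = \int_{\mathcal F} g(Ak)\, dm_{G_d}(A),
\]
where $g(w) := \int_{\R^d} f(z)f(z+w)\, dz$ is the autocorrelation of $f$. The diagonal term $k=0$ contributes $\int_{\R^d} f^2$. Summing the off-diagonal over $k\neq 0$ produces $\int_{\X_d} \hat g\, d\mu_{\X_d}$, and Siegel's classical mean value formula gives this equal to $\int_{\R^d} g(w)\,dw = (\int_{\R^d} f)^2$ by Fubini. Adding the two contributions yields the claim.

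The main obstacle is justifying the Fubini exchanges and the application of Siegel's formula to $g$ starting from the hypothesis $f \in L^1(\R^d) \cap L^2(\R^d)$. Here $g = f * \check f$ with $\check f(x) = f(-x)$, and Young's inequality plus Cauchy--Schwarz show $g \in L^1(\R^d) \cap L^\infty(\R^d)$, which is enough for Siegel's formula. To sidestep any absolute-convergence concerns in the double sum, I would first establish both identities for non-negative $f \in C_c(\R^d)$, where every interchange is legitimate, and then extend to non-negative $f \in L^1 \cap L^2$ by monotone convergence, and finally to general $f$ via the decomposition $f = f_+ - f_-$.
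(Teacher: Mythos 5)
Your proposal is correct and follows essentially the same route as the paper: unfold over the product fundamental domain $\mathcal{F}\times\T^d$, split the square into the diagonal term and a difference-parametrized off-diagonal sum, recognize the latter as the Siegel transform of the autocorrelation $g$ on $\X_d$, and apply the classical Siegel mean value theorem. You add some welcome care the paper elides (Young/Cauchy--Schwarz to place $g$ in $L^1\cap L^\infty$, and the approximation argument to pass from nonnegative $C_c$ to general $f\in L^1\cap L^2$), but the core structure and the key ingredient are identical.
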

\begin{proof}
The proof is well known and can be found in \cite{JA} or \cite{GH}. The latter contains a more detailed proof  and proves a more generalized version of above Proposition. The proof of \eqref{Af Rog1} can also be found in \cite{BMV}. We provide a proof for completeness. 

We may assume that $f$ is a non-negative function. Let us denote by $\mathcal{F}$, a fundamental region of action of $\SL_d(\Z)$ on $\SL_d(\R)$. Then we have, 
\begin{align*}
   \int_{\X} \hat{f}(\Lambda) \, d\mu_{\X}(\Lambda) &= \int_{\mathcal{F}} \int_{\T^{d}} \hat{f}( (A,Av).\Gamma) \, dv \, dm_{G_d}(A) \\
   &= \int_{\mathcal{F}}  \int_{[0,1]^d} \sum_{\substack{ \bar{p} \in \Z^d \\ A \bar{p} +Av \neq 0 } } {f}(A \bar{p} +Av) \, dv \, dm_{G_d}(A) \\
   &= \int_{\mathcal{F}}  \int_{[0,1]^d} \sum_{ \bar{p} \in \Z^d } {f}(A \bar{p} + Av) \, dv \, dm_{G_d}(A) \\
   &= \int_{\mathcal{F}}  \int_{\R^d}  {f}( Av) \, dv \, dm_{G_d}(A) \\
   &= \int_{\mathcal{F}}  \int_{\R^d}  {f}(v) \, dv \, dm_{G_d}(A) \\
   &= \int_{\R^d}  {f}( v) \, dv,
\end{align*}
which proves \eqref{Af Se1}. Also, we have 
\begin{align*}
    &\int_{\X} (\hat{f}(\Lambda))^2 \, d\mu_{\X}(\Lambda) = \int_{\mathcal{F}} \int_{\T^{d}} (\hat{f}( (A,Av).\Gamma))^2 \, dv \, dm_{G_d}(A) \\
    &= \int_{\mathcal{F}}  \int_{[0,1]^d} \sum_{\substack{ \bar{p} \in \Z^d \\ A \bar{p} +Av \neq 0 } } \sum_{\substack{ \bar{q} \in \Z^d \\ A \bar{p} +Av \neq 0 } } {f}(A \bar{p} +Av) {f}(A \bar{q} +Av) \, dv \, dm_{G_d}(A) \\
   &= \int_{\mathcal{F}}  \int_{[0,1]^d} \sum_{ \bar{p} \in \Z^d } \sum_{ \bar{q} \in \Z^d } {f}(A \bar{p} + Av) {f}(A \bar{q} + Av) \, dv \, dm_{G_d}(A) \\
   &= \int_{\mathcal{F}}  \int_{[0,1]^d} \sum_{ \bar{p} \in \Z^d } ({f}(A \bar{p} + Av))^2  \, dv \, dm_{G_d}(A) +  \int_{\mathcal{F}}  \int_{[0,1]^d} \sum_{ \bar{p} } \sum_{ \bar{q} \neq \bar{p} } {f}(A \bar{p} + Av) {f}(A \bar{p} + Av) \, dv \, dm_{G_d}(A) \\
    &{=} \int_{\mathcal{F}}  \int_{\R^d}  {f}(Av) \, dv \, dm_{G_d}(A) +  \int_{\mathcal{F}}  \int_{[0,1]^d} \sum_{ \bar{p} } \sum_{ \bar{q} \neq \bar{p} } {f}(A \bar{p} + Av) {f}(A \bar{p} + Av) \, dv \, dm_{G_d}(A).
\end{align*}
To compute the second integral, we set $\bar{a}= \bar{p} - \bar{q}$, $\bar{b}= \bar{q}$  and use non-negativity of $f$ (which allows interchange of integration and summation) to get 
\begin{align*}
    &\int_{\mathcal{F}} \sum_{ \bar{p} } \sum_{ \bar{q} \neq \bar{p} } \int_{[0,1]^d}  {f}(A \bar{p} + Av) {f}(A \bar{p} + Av) \, dv \, dm_{G_d}(A) \\
    &= \int_{\mathcal{F}} \sum_{ \bar{a} \neq 0 }  \int_{[0,1]^d} \sum_{ \bar{b} }  {f}(A \bar{a} + A\bar{b}  + Av) {f}(A \bar{b} + Av) \, dv \, dm_{G_d}(A) \\
     &= \int_{\mathcal{F}} \sum_{ \bar{a} \neq 0 }  \int_{\R^d}  {f}(A \bar{a}  + v) {f}( v) \, dv \, dm_{G_d}(A) \\
      &= \int_{\X_d} \sum_{ u \in \Lambda \setminus \{0\} }  \int_{\R^d}  {f}(u  + v) {f}( v) \, dv \, dm_{\X_d}(\Lambda) \\
     &= \int_{\R^d}  \int_{\R^d}  {f}(w  + v) {f}( v) \, dv \, dw \\
     &= \left(\int_{\R^{d}}f \right)^2,
\end{align*}
where the second last equality holds by Siegel Mean Value Theorem \cite{sie}. This proves the theorem.
\end{proof}

\subsubsection{Non-divergence estimates of Siegel Transforms} \hfill \\

We retain the notation from Section \ref{Aff Intro}. Given 
$$
0<w_1,\ldots, w_m<n \text{ and }  w_1+\ldots+w_m=n,
$$
we denote by $a$ the self-map on $\X$ induced by $a=(a',0)$ where
\begin{equation}
a'={diag}(e^{w_1},\ldots,e^{w_m},e^{-1},\ldots,e^{-1}).
\end{equation}
Our goal in this subsection is to analyze escape of mass for the submanifolds $a^s\Y$ and 
bound the Siegel transforms $\hat f(a^s y)$ for $y\in \Y$. We have the following Proposition.

\begin{prop}
\label{Siegel Aff 1}
    There exists $\kappa>0$ such that for every $L \geq 1$ and $s \geq \kappa \log L$,
    \begin{equation*}
        \mu_{\Y}(\{y \in \Y : \alpha(a^sy) \geq L\}) \ll_p L^{-p} \text{    for all } p< m+n.
    \end{equation*}
\end{prop}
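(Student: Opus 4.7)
The plan is to reduce this escape-of-mass statement on the affine space $\X$ to the corresponding statement on the homogeneous space $\X_d$, which is already available in \cite{BG}. Two facts make this reduction essentially free: (i) by construction, $\alpha = \alpha_d \circ \pi$, and (ii) since $a = (a', 0)$, we have the equivariance $\pi(a^s x) = (a')^s \pi(x)$. Combining these with the fact (established in Section \ref{Aff Intro}) that $\pi$ sends $(\Y, \mu_\Y)$ onto $(\Y_d, \mu_{\Y_d})$ measure-preservingly yields the exact identity
\begin{equation*}
\mu_\Y\bigl(\{y \in \Y : \alpha(a^s y) \geq L\}\bigr) = \mu_{\Y_d}\bigl(\{z \in \Y_d : \alpha_d((a')^s z) \geq L\}\bigr).
\end{equation*}

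Thus it suffices to bound the right-hand side by $L^{-p}$ for $s \geq \kappa \log L$, which is the homogeneous analogue. The standard argument there, used in \cite{BG}, proceeds by a two-scale decomposition of the target set $\{\alpha_d \geq L\} \subset \X_d$. For the ``extreme'' piece where $(a')^s z$ falls outside $\K_{\epsilon,d}$ with $\epsilon$ of order $L^{-1/(m+n)}$, one applies Theorem \ref{hom lemma 2} directly to obtain a bound of order $\epsilon^\theta$. For the ``moderate'' piece, one approximates the indicator of $\{\alpha_d \geq L\} \cap \K_{\epsilon,d}$ by a smooth cutoff supported on a compact set, then uses the effective equidistribution of the torus $\Y_d$ under $(a')^s$ (the homogeneous version of Theorem \ref{aff main lemma for mixing}) to replace $\mu_{\Y_d}$-mass by $\mu_{\X_d}$-mass; the latter is at most $L^{-p}$ by Proposition \ref{alpha 2}. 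Choosing the thresholds so that the equidistribution error $\rho^{-c}e^{-\gamma s}$ balances against $L^{-p}$ forces $s \geq \kappa \log L$ for a $\kappa = \kappa(p, c, \gamma, \theta)$.

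The main technical obstacle is the bookkeeping in this two-scale split: the cutoff used to smoothly approximate the indicator of $\{\alpha_d \geq L\} \cap \K_{\epsilon,d}$ has $\C^k$-norm that blows up as $\epsilon \downarrow 0$, so one must keep careful track of how the polynomial loss in $\rho^{-c}$ and the polynomial loss from the cutoff interact, and check that a single $\kappa$ works uniformly across $p < m+n$. Once this is in place, the final bound $\mu_\Y(\{\alpha(a^s y) \geq L\}) \ll_p L^{-p}$ drops out for $s \geq \kappa \log L$.
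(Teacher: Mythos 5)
Your reduction to the homogeneous space $\X_d$ is correct and matches the paper's proof exactly: both use $\alpha = \alpha_d \circ \pi$, the equivariance $\pi(a^s x) = (a')^s \pi(x)$, and the measure-preserving property of $\pi|_\Y : (\Y,\mu_\Y) \to (\Y_d,\mu_{\Y_d})$ to produce the identity $\mu_\Y(\{\alpha(a^s y) \geq L\}) = \mu_{\Y_d}(\{\alpha_d((a')^s z) \geq L\})$, after which the paper simply cites Proposition 4.5 of \cite{BG} (restated here as Proposition \ref{Siegel Hom 1}). Your subsequent sketch of how that cited homogeneous estimate is itself proved is not needed for this Proposition — and as you note, it has nontrivial bookkeeping (the naive bound $\epsilon^\theta$ with $\epsilon \sim L^{-1/d}$ only gives the exponent $\theta/d$, far below the required $p < m+n$, so one must indeed be more careful than the one-line description suggests) — but since the reduction plus citation is all the paper does, the proposal is in substance the same argument.
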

\begin{proof}
    It is clear that
    \begin{align*}
       \mu_{\Y}(\{y \in \Y : \alpha(a^sy) \geq L\}) &= \mu_{\Y}(\{y \in \Y : \alpha_d(\pi(a^sy)) \geq L\}) \\
        &= \mu_{\Y}(\{y \in \Y : \alpha_{d}((a')^s \pi(y)) \geq L\}) \\
        &=\mu_{\Y}(\pi^{-1}(\{z \in \Y_d : \alpha_{d}((a')^s z) \geq L\})) \\
        &= \mu_{\Y_d}(\{z \in \Y_d : \alpha_{d}((a')^s z) \geq L\}). 
    \end{align*}
   Now the proof follows from the Proposition. 
\begin{prop}[\cite{BG}, Prop. 4.5]
\label{Siegel Hom 1}
    There exists $\kappa>0$ such that for every $L \geq 1$ and $s \geq \kappa \log L$,
    \begin{equation*}
        \mu_{\Y_d}(\{z \in \Y_d : \alpha_{d}((a')^s z) \geq L\}) \ll_p L^{-p} \text{    for all } p< m+n.
    \end{equation*}
\end{prop}
   
\end{proof}

\begin{prop} 
\label{siegel main aff}
    Let $f$ be a bounded measurable function on $\R^{m+n}$ with compact support contained in the open set $\{(x_{m+1}, \ldots, x_{m+n}) \neq 0\}$. Then for $i=1,2$, we have
    \begin{equation}
       \underset{s \geq 0}{\sup} \| \hat{f} \circ a^s\|_{L^i(\Y)} < \infty.
    \end{equation}
\end{prop}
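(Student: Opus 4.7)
The strategy is to bound both moments directly by unfolding the Siegel sum in the parametrization $y = \Lambda_{u,v}$ of $\Y$ from \eqref{defLambda1}, with $(u,v)\in[0,1]^{mn}\times[0,1]^m$. The key input is that $\operatorname{supp}(f)$ is compact and avoids $\{(x_{m+1},\ldots,x_{m+n})=0\}$, so it is contained in $\R^m\times\{\delta \leq \|z\|\leq R\}$ for some $0<\delta<R$; this forces every contributing lattice vector to have $\bar q\neq 0$.

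For the first moment, for each fixed $\bar q\neq 0$, the combined operation $\sum_{\bar p\in\Z^m}\int_{[0,1]^m}\cdot\,dv$ unfolds the first $m$ coordinates to $\int_{\R^m}\,dz$, and after the substitution $z_i = e^{w_i s}(\bar p_i + (u\bar q)_i + v_i)$ (Jacobian $e^{-ns}$ since $\sum_i w_i = n$) one obtains
\begin{equation*}
\int_{\Y}\hat f(a^s y)\,d\mu_{\Y}(y) = e^{-ns}\sum_{\bar q\in\Z^n\setminus\{0\}} F(e^{-s}\bar q),\qquad F(z):=\int_{\R^m}f(y,z)\,dy,
\end{equation*}
with $F$ bounded, compactly supported, and supported away from $z=0$. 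This is a Riemann sum converging to $\int_{\R^n}F = \int f$ that is uniformly bounded in $s\geq 0$, since only $O(e^{ns})$ values of $\bar q$ contribute.

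For the second moment I would expand $\hat f(a^s y)^2$ and split the sum over $(\bar q_1,\bar q_2)$ into diagonal ($\bar q_1=\bar q_2$) and off-diagonal parts. Setting $\bar a = \bar p_2-\bar p_1$ and performing the same change of variable, the diagonal part (after noting that compact support of $f$ forces $|\bar a_i|\ll e^{-w_i s}$, so only boundedly many $\bar a$ contribute, with only $\bar a=0$ surviving for $s$ large) behaves as $e^{-ns}\sum_{\bar q\neq 0} F_2(e^{-s}\bar q) + O(1)$ with $F_2(z):=\int f(y,z)^2\,dy$, which is again uniformly bounded. For $\bar r := \bar q_2 - \bar q_1\neq 0$, the same substitution gives
\begin{equation*}
I(\bar q_1,\bar q_2,s) = e^{-ns}\int_u \sum_{\bar a\in\Z^m}\int_{\R^m} f(z,e^{-s}\bar q_1)\,f(z+\bar b,e^{-s}\bar q_2)\,dz\,du,
\end{equation*}
where $\bar b_i = e^{w_i s}(\bar a_i + \sum_j u_{ij} r_j)$, and compact support of $f$ forces $|\bar b_i|\ll 1$ for every $i$.

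The hard part is the off-diagonal estimate: one must show that the joint measure of $(\bar a,u) \in \Z^m\times[0,1]^{mn}$ satisfying $|\bar b_i|\ll 1$ for all $i$ is $O(e^{-ns})$. By independence of the rows of $u$ this reduces to a row-by-row bound $|\{(\bar a_i, u_{i\cdot})\in\Z\times[0,1]^n : |\bar b_i|\ll 1\}|\ll e^{-w_i s}$, which holds because for each admissible $\bar a_i$ (there are $O(1+\|\bar r\|_1)$ of them) the slab in $u_{i\cdot}$ has Lebesgue measure $O(e^{-w_i s}/\|\bar r\|_\infty)$, and $\|\bar r\|_1\leq n\|\bar r\|_\infty$. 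Taking the product across $i=1,\ldots,m$ yields $|I(\bar q_1,\bar q_2,s)| \ll e^{-2ns}$, and since only $O(e^{2ns})$ pairs $(\bar q_1,\bar q_2)$ contribute (those with $e^{-s}\bar q_j$ in the projection of $\operatorname{supp}(f)$ to $\R^n$), the total off-diagonal contribution is $O(1)$ uniformly in $s\geq 0$, completing the proof.
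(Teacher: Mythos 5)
Your proof is correct, and it takes a genuinely different route from the paper's. The paper reduces the affine estimate to the homogeneous estimate of Bj\"orklund--Gorodnik (their Propositions~4.6 and 4.8): after fixing $f$ to be a standard indicator, they introduce a norm $\|\cdot\|_{n+1}$ on $\R^{n+1}$, bundle $(\bar u_i, v_i)$ and $(\bar q,1)$ into $(n+1)$-vectors, bound $\chi'_{\vartheta\|\bar q\|^{-w_i}}$ by $\chi'_{2\vartheta\|(\bar q,1)\|_{n+1}^{-w_i}}$ for $s$ large, and then observe that the resulting expression is dominated by the homogeneous $L^1$ and $L^2$ expressions treated in \cite{BG} (with $m$ forms in $n+1$ variables), which they cite. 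Your argument instead exploits a structural feature special to the affine fibre: the joint operation $\sum_{\bar p\in\Z^m}\int_{[0,1]^m}dv$ unfolds the first $m$ coordinates \emph{exactly} to $\int_{\R^m}dz$, independently of $u$, which turns the first moment into a closed-form Riemann sum $e^{-ns}\sum_{\bar q} F(e^{-s}\bar q)$ with an $O(e^{ns})$-term support, and reduces the second moment to a diagonal/off-diagonal analysis in which the only nontrivial estimate is the joint slab measure in $(\bar a, u)$. That slab bound (row-by-row, using $\|\bar r\|_1 \ll \|\bar r\|_\infty$ and the integrality of $\bar r\neq 0$) is the same kind of computation that underlies BG's Proposition~4.8, so in effect you are re-deriving BG's $L^2$ bound rather than citing it; the payoff is that the first moment becomes trivial and the whole proof is self-contained, at the cost of redoing the off-diagonal measure estimate. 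Both approaches are sound; the paper's is shorter because it leans on \cite{BG}, yours is more transparent about where the uniformity in $s$ actually comes from.
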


\begin{proof}
     We note that there exists $0< \upsilon_1 < \upsilon_2 $ and $\vartheta >0$ such that the support of $f$ is contained in the set 
    \begin{equation}
        \{(\bar{x},\bar{y}) \in \R^{m+n} : \upsilon_1 \leq \|\bar{y}\| \leq \upsilon_2, |x_i| \leq \vartheta\|\bar{y}\|^{-w_i}, i=1,\ldots, n\},
    \end{equation}
    and without loss of generality we may assume that $f$ is characteristic function of above set. We recall that $\Y$ can be identified with the collection of lattices $$\{ \Lambda_{u,v} :\bar{u} =(u_{ij}: 1 \leq i \leq m, 1 \leq j \leq n) \in [0,1)^{mn}, v \in [0,1)^m\}.$$ We set $\bar{u}_i := (u_{i1}, \ldots, u_{in}).$ Then by the definition of the Siegel transform and the fact that $f(0_d)=0$, we have

    \begin{align}
        \hat{f}(a^s\Lambda_{u,v}) = \sum_{(\bar{p},\bar{q}) \in \Z^{m+n} } f\left( e^{w_1 s}(p_1 + \langle \bar{u}_1, \bar{q} \rangle + v_1) ,\ldots, e^{w_ms }(p_m  + \langle \bar{u}_m, \bar{q}\rangle + v_m ) , e^{-s}\bar{q}  \right).
    \end{align}
    If we  denote by $\chi'_M$ the characteristic function of the interval $[-M,M]$, we get
    \begin{align}
        \hat{f}(a^s\Lambda_{u,v}) &= \sum_{  \upsilon_1 e^s \leq \|\bar{q} \| \leq \upsilon_2 e^s} \sum_{\bar{p} \in \Z^m}  \prod_{i=1}^m \chi'_{\vartheta\|\bar{q} \|^{-w_i}} \left(p_i + \langle \bar{u}_i, \bar{q} \rangle + v_i   \right)  \nonumber \\ 
        \label{aff int open 1}
        &= \sum_{  \upsilon_1 e^s \leq \|\bar{q} \| \leq \upsilon_2e^s}  \prod_{i=1}^m  \left( \sum_{p_i \in \Z}  \chi'_{\vartheta\|\bar{q} \|^{-w_i}} \left(p_i + \langle \bar{u}_i, \bar{q} \rangle + v_i   \right)  \right). 
    \end{align}
    We now define a norm $\|.\|_{n+1}$ on $\R^{n+1}$ as $$\|(x,y)\|_{n+1} = \|x\|+ |y|, \text{  for all } x \in \R^n, y \in \R.$$ Choose $t>0$ large enough so that $(2^{1/w_i}-1)\upsilon_1 e^t >1$ for all $i= 1 , \ldots,m$ and $\upsilon_2 e^t > 1$. Then for all $s \geq t$, we have 
    \begin{align*}
         \chi'_{\vartheta\|\bar{q} \|^{-w_i}} \left(p_i + \langle \bar{u}_i, \bar{q} \rangle + v_i   \right) &\leq  \chi'_{ 2 \vartheta\|(\bar{q},1) \|_{n+1}^{-w_i}} \left(p_i + \langle \bar{u}_i, \bar{q} \rangle + v_i   \right)
    \end{align*}
   for all $\bar{q} \in \Z^m$ satisfying $\upsilon_1 e^s \leq \|\bar{q} \| \leq \upsilon_2e^s.$ With slight abuse of notation, the usual inner product on $\R^{n+1}$ will also be denoted by $\langle ., . \rangle$. Then, we have  
   \begin{align*}
       \hat{f}(a^s\Lambda_{u,v}) &= \sum_{  \upsilon_1 e^s \leq \|\bar{q} \| \leq \upsilon_2e^s}  \prod_{i=1}^m  \left( \sum_{p_i \in \Z}  \chi'_{\vartheta\|\bar{q} \|^{-w_i}} \left(p_i + \langle \bar{u}_i, \bar{q} \rangle + v_i   \right)  \right)  \\
       &\leq \sum_{  \upsilon_1 e^s \leq \|\bar{q} \| \leq \upsilon_2e^s}  \prod_{i=1}^m  \left( \sum_{p_i \in \Z}  \chi'_{ 2 \vartheta\|(\bar{q},1) \|_{n+1}^{-w_i}} \left(p_i + \langle \bar{u}_i, \bar{q} \rangle + v_i   \right)  \right) \\
       & = \sum_{  \upsilon_1 e^s \leq \|\bar{q} \| \leq \upsilon_2e^s}  \prod_{i=1}^m  \left( \sum_{p_i \in \Z}  \chi'_{ 2 \vartheta\|(\bar{q},1) \|_{n+1}^{-w_i}} \left(p_i + \langle (\bar{u}_i, v_i), (\bar{q},1) \rangle   \right)  \right) \\
       &\leq \sum_{  \upsilon_1 e^s \leq \|(\bar{q},1)\|_{m+1}  \leq 2\upsilon_2e^s}  \prod_{i=1}^m  \left( \sum_{p_i \in \Z}  \chi'_{ 2 \vartheta\|(\bar{q},1) \|_{n+1}^{-w_i}} \left(p_i + \langle (\bar{u}_i, v_i), (\bar{q},1) \rangle   \right)  \right) \\
       &\leq \sum_{ \substack{ \bar{q} \in \Z^{n+1} \\  \\ \upsilon_1 e^s \leq \| \bar{q}\|_{m+1}  \leq 2\upsilon_2e^s }}  \prod_{i=1}^m  \left( \sum_{p_i \in \Z}  \chi'_{ 2 \vartheta\|\bar{q} \|_{m+1}^{-w_i}} \left(p_i + \langle (\bar{u}_i, v_i), \bar{q} \rangle   \right)  \right).
   \end{align*}
Thus, we have 
\begin{align}
\label{aff 2 1}
    \|\hat{f} \circ a^s\|_{L^1(\Y)} &\leq \sum_{ \substack{ \bar{q} \in \Z^{n+1} \\  \\ \upsilon_1 e^s \leq \| \bar{q}\|_{m+1}  \leq 2\upsilon_2e^s }}  \prod_{i=1}^m  \left( \sum_{p_i \in \Z} \int_{[0,1]^{n+1}} \chi'_{ 2 \vartheta\|\bar{q} \|_{m+1}^{-w_i}} \left(p_i + \langle\bar{w}_i, \bar{q} \rangle   \right)  \, d \bar{w}_i \right)
\end{align}
and 
 \begin{align}
        &\|\hat{f} \circ a^s\|_{L^2(\Y)} = \int_{\Y} \hat{f}(a^sy) \hat{f}(a^sy) \, d\mu_{\Y}(y) \nonumber \\
       \label{aff 2 2}
        &\leq \sum_{ \substack{ \bar{q}, \bar{l} \in \Z^{n+1} \\  \\ \upsilon_1 e^s \leq \| \bar{q}\|_{m+1}, \leq 2\upsilon_2e^s \\  \\ \upsilon_1 e^s \leq \| \bar{l}\|_{m+1}, \leq 2\upsilon_2e^s }}  \prod_{i=1}^m \left(  \sum_{p_i, r_i \in \Z}  \int_{[0,1]^{n+1}} \chi'_{\frac{2 \vartheta}{\|\bar{q} \|_{m+1}^{w_i}}} \left(p_i + \langle \bar{w}_i, \bar{q} \rangle  \right)  \chi'_{\frac{2 \vartheta}{\|\bar{l} \|_{m+1}^{w_i}}} \left(r_i + \langle \bar{w}_i, \bar{l} \rangle  \right)  d\bar{w_i} \right).
\end{align}
Note that the expression \eqref{aff 2 1} is the same as that in equation (4.3) of \cite{BG}. The boundedness of the latter expression, independent of $s$, was proved in Proposition 4.6 of \cite{BG}. Similarly, the expression \eqref{aff 2 2} is the same as the equation obtained in Proposition 4.8 of \cite{BG}, and the boundedness of the latter expression, independent of $s$, is the content of the same Proposition.
\end{proof}

\subsubsection{Truncated Siegel Transforms} \label{Aff Truncated Siegel}\hfill \\

We define, for a bounded function $f: \R^{m+n} \rightarrow \R$ with compact support, the \textit{truncated Siegel transform} of $f$ as $$\hat{f}^{(L)} := \hat{f}. \eta_L,$$ where $\eta_L$ is defined below in Lemma \ref{aff trun 1}. We also record some basic properties of this transform in Lemma \ref{aff trun 2}.
\begin{lem}
\label{aff trun 1}
    For every $c>1$, there exists a family $(\eta_L)$ in $\C_c^{\infty}(\X)$ satisying
    \begin{equation*}
        0\leq \eta_L \leq 1,\ \ \eta_L= 1 \text{ on } \{\alpha \leq c^{-1}L\}, \ \ \eta_L =0 \text{ on } \{ \alpha > cL\}, \ \ \|\eta_L\|_{C^k} \ll 1
    \end{equation*}
\end{lem}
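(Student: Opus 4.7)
The plan is to construct $\eta_L$ by convolving the indicator function of $\{\alpha \le L\}$ with a fixed smooth bump on $G$, using the $G$-action on $\X$. The crucial input is that $\alpha$, while not smooth, is \emph{log-Lipschitz} under the $G$-action: there is a continuous function $C \colon G \to \R_{>0}$ with $C(e)=1$ such that
\[
C(g)^{-1}\alpha(x) \;\le\; \alpha(gx) \;\le\; C(g)\,\alpha(x) \qquad \text{for all } g \in G,\ x \in \X.
\]
This follows from the identity $\alpha = \alpha_d \circ \pi$ together with the corresponding (classical) property of $\alpha_d$ on $\X_d$, which in turn is a direct consequence of the definition of $\alpha_d$ as a supremum of reciprocals of covolumes of $\Lambda$-rational subspaces (the covolumes change under $g$ by bounded factors depending only on the operator norm of $g$).

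Given $c>1$, I would choose a neighborhood $B$ of the identity in $G$ small enough that $C(g) \le c$ for all $g \in B$, and fix a non-negative $\psi \in \C_c^\infty(G)$ supported in $B$ with $\int_G \psi\, dm_G = 1$. Then define
\[
\eta_L(x) \;:=\; \int_G \psi(g)\,\mathbf{1}_{\{\alpha \le L\}}(gx)\, dm_G(g).
\]
The bounds $0 \le \eta_L \le 1$ are immediate. If $\alpha(x) \le c^{-1}L$, then $\alpha(gx) \le C(g)\,c^{-1}L \le L$ for every $g \in B$, so $\eta_L(x)=1$. If $\alpha(x) > cL$, then $\alpha(gx) \ge C(g)^{-1}\alpha(x) > c^{-1}\cdot cL = L$ for every $g \in B$, so $\eta_L(x)=0$. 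The support of $\eta_L$ lies in $\{\alpha \le cL\} = \pi^{-1}(\K_{1/(cL),d})$, which is compact by Mahler's criterion and the properness of $\pi$, so $\eta_L \in \C_c(\X)$.

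Smoothness and the uniform $C^k$-bound come from transferring $x$-dependence onto $\psi$. Using bi-invariance of $m_G$, for $h \in G$ one has
\[
\eta_L(hx) \;=\; \int_G \psi(gh^{-1})\,\mathbf{1}_{\{\alpha \le L\}}(gx)\, dm_G(g),
\]
so differentiating under the integral yields, for every monomial $Z$ of degree $\le k$ in a fixed ordered basis of $Lie(G)$,
\[
(\D_Z \eta_L)(x) \;=\; \int_G (\widetilde{\D}_Z \psi)(g)\, \mathbf{1}_{\{\alpha \le L\}}(gx)\, dm_G(g),
\]
where $\widetilde{\D}_Z$ is a fixed differential operator of degree $\deg Z$ on $G$. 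Hence $\|\D_Z \eta_L\|_\infty \le \|\widetilde{\D}_Z \psi\|_\infty \cdot m_G(\mathrm{supp}\,\psi)$, giving $\|\eta_L\|_{C^k} \ll 1$ with a constant depending only on $\psi$ (equivalently, on $c$ and $k$) and not on $L$.

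The only delicate step is the log-Lipschitz property of $\alpha$ under the $G$-action; once this is in hand, the rest is the standard "smooth cutoff via group convolution" construction, and the usual technicalities (interchange of derivative and integral, compactness of the support) are routine.
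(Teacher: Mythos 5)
Your construction is correct and is essentially the standard argument used in Bj\"orklund--Gorodnik's Lemma 4.11, to which the paper defers: mollify the indicator of $\{\alpha\le L\}$ by a fixed bump function on $G$, using that $\alpha$ is uniformly log-Lipschitz under the $G$-action (which here reduces to the classical fact for $\alpha_d$ via $\alpha=\alpha_d\circ\pi$ and the $G$-equivariance of $\pi$). The support, sandwich, and $C^k$-bound arguments are all as the paper intends, and the transfer of derivatives to $\psi$ via bi-invariance of $m_G$ is exactly why the $C^k$-bound is uniform in $L$.
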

\begin{lem}
\label{aff trun 2}
    For $f \in \C_c^{\infty}(\R^{m+n})$, the truncated Siegel transform $\hat{f}^{(L)}:= \hat{f}. \eta_L$ is in $\C-C^{\infty}(\X)$, and it satisfies
    \begin{align*}
      \|\hat{f}^{(L)}\|_{L^p(\X)} \leq \|\hat{f}\|_{L^p(\X)} &\ll_{supp(f),p} \|f\|_{C^0} \text{    for all } p<m+n  \\
      \|\hat{f}^{(L)}\|_{C^0} &\ll_{supp(f)} L \|f\|_{C^0}  \\
      \|\hat{f}^{(L)}\|_{C^k} &\ll_{supp(f)} L \|f\|_{C^k}  \\
      \|\hat{f}-\hat{f}^{(L)}\|_{L^1(\X)} &\ll_{supp(f),\tau} L^{-\tau}\|f\|_{C^0} \text{    for all } \tau<m+n-1 \\
      \|\hat{f}-\hat{f}^{(L)}\|_{L^2(\X)} &\ll_{supp(f),\tau} L^{-(\tau-1)/2}\|f\|_{C^0} \text{    for all } \tau<m+n-1 
    \end{align*}
    Moreover, the implied constants are uniform when $supp(f)$ is contained in a fixed compact set.
\end{lem}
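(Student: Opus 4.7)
The plan is to deduce all five estimates from two structural inputs: the pointwise bound $|\hat f(\Lambda)|\ll_{supp(f)}\|f\|_{C^0}\alpha(\Lambda)$ from Proposition \ref{aff alpha 1 } and the tail estimate $\mu_{\X}(\alpha\ge L)\ll_p L^{-p}$ for $p<m+n$ from Proposition \ref{aff alpha 2}, together with the defining properties of $\eta_L$ from Lemma \ref{aff trun 1}: $0\le\eta_L\le 1$, $supp(\eta_L)\subset\{\alpha\le cL\}$, $\eta_L\equiv 1$ on $\{\alpha\le c^{-1}L\}$, and $\|\eta_L\|_{C^k}\ll 1$. The first line of the lemma is then immediate: $\|\hat f^{(L)}\|_{L^p}\le\|\hat f\|_{L^p}$ since $0\le\eta_L\le 1$, and integrating $|\hat f|^p\ll_{supp(f)}\|f\|_{C^0}^p\alpha^p$ against $\mu_{\X}$ bounds it by $\|f\|_{C^0}$ using $\alpha\in L^p(\X)$ for $p<m+n$.

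For the $C^0$ estimate, $\hat f\eta_L$ vanishes outside $\{\alpha\le cL\}$, where Proposition \ref{aff alpha 1 } gives $|\hat f|\ll_{supp(f)}\|f\|_{C^0}\alpha\le cL\|f\|_{C^0}$. For the $C^k$ estimate, apply Leibniz:
\begin{equation*}
\D_Z(\hat f\,\eta_L)=\sum_{Z_1,Z_2}c_{Z_1,Z_2}\,\D_{Z_1}\hat f\cdot\D_{Z_2}\eta_L,
\end{equation*}
summed over decompositions with $\deg Z_1+\deg Z_2\le k$. Because $G=\SL_{m+n}(\R)\ltimes\R^{m+n}$ acts on $\R^{m+n}$ by affine linear maps, every $Y=(X,w)\in Lie(G)$ corresponds to the vector field $z\mapsto Xz+w$ on $\R^{m+n}$; a direct computation from the definition of $\hat f$ shows $\D_Y\hat f=\widehat{Y^\flat f}$, where $Y^\flat f(z):=(Xz+w)\cdot\nabla f(z)$ is supported in $supp(f)$ with $\|Y^\flat f\|_{C^0}\ll_{supp(f)}\|f\|_{C^1}$. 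Iterating, $\D_{Z_1}\hat f=\widehat{(Z_1)^\flat f}$ with $\|(Z_1)^\flat f\|_{C^0}\ll_{supp(f)}\|f\|_{C^k}$, so Proposition \ref{aff alpha 1 } gives $|\D_{Z_1}\hat f|\ll\|f\|_{C^k}\alpha$; on $supp(\D_{Z_2}\eta_L)\subset supp(\eta_L)$ this is $\ll L\|f\|_{C^k}$, and combined with $\|\D_{Z_2}\eta_L\|_\infty\le\|\eta_L\|_{C^k}\ll 1$ every Leibniz term is $\ll L\|f\|_{C^k}$.

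For the tail estimates, write $\hat f-\hat f^{(L)}=\hat f(1-\eta_L)$, which is supported in $\{\alpha\ge c^{-1}L\}$. By Proposition \ref{aff alpha 1 } and the layer cake formula combined with Proposition \ref{aff alpha 2},
\begin{equation*}
\int_{\{\alpha\ge c^{-1}L\}}\alpha\,d\mu_{\X}\le c^{-1}L\cdot\mu_{\X}(\alpha\ge c^{-1}L)+\int_{c^{-1}L}^{\infty}\mu_{\X}(\alpha>t)\,dt\ll_p L^{1-p}
\end{equation*}
for any $p\in(1,m+n)$, yielding $\|\hat f-\hat f^{(L)}\|_{L^1(\X)}\ll\|f\|_{C^0}L^{1-p}$; taking $\tau=p-1$ covers the stated range $\tau<m+n-1$. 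An analogous layer cake computation applied to $\alpha^2$ gives $\int_{\{\alpha\ge c^{-1}L\}}\alpha^2\,d\mu_{\X}\ll_p L^{2-p}$ for $p\in(2,m+n)$, whence $\|\hat f-\hat f^{(L)}\|_{L^2(\X)}\ll\|f\|_{C^0}L^{(2-p)/2}$, which with $\tau=p-1$ yields the desired $L^{-(\tau-1)/2}$ in the full range $\tau<m+n-1$ (for $\tau\le 1$ the bound is weaker than the $L^p$ estimate of the first line, so it is trivial). The only step requiring any care is the identity $\D_Y\hat f=\widehat{Y^\flat f}$ for the $C^k$ bound, and this is a routine consequence of the explicit affine action of $G$ on $\R^{m+n}$; none of the five estimates presents a real obstacle.
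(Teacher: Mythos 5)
Your proof is correct and reproduces, in the affine setting, the argument of Bj\"orklund--Gorodnik's Lemma 4.12, which is exactly what the paper cites without supplying details: the pointwise domination $|\hat f|\ll_{\mathrm{supp}(f)}\|f\|_{C^0}\alpha$, the tail estimate $\mu_{\X}(\alpha\ge L)\ll_p L^{-p}$ for $p<m+n$, a Leibniz expansion against the uniformly bounded cutoffs $\eta_L$, and a layer-cake computation for the last two estimates. The one point worth flagging is that the identity $\D_Y\hat f=\widehat{Y^\flat f}$, and hence the asserted smoothness of $\hat f^{(L)}$, is not quite automatic in the affine setting: since $\hat f(\Lambda)=\sum_{z\in\Lambda\setminus\{0\}}f(z)$ and a generic affine lattice does not pass through the origin, $\hat f$ has a jump of size $f(0)$ across the positive-codimension locus of affine lattices containing $0$, so one really needs $0\notin\mathrm{supp}(f)$ (which holds in every application, where $\mathrm{supp}(f)\subset\{(x_{m+1},\ldots,x_{m+n})\neq 0\}$, but is not stated in the lemma). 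Modulo this harmless imprecision, which the paper itself inherits from quoting the homogeneous-case lemma, your argument is complete and matches the intended proof.
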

The proofs of Lemmas \ref{aff trun 1}, \ref{aff trun 2} are exact replicas of the proofs of Lemmas 4.11 and 4.12 of \cite{BG} respectively, so we have not repeated them.

\subsection{CLT for Smooth Siegel Transforms}
\begin{thm}
    Assume $f \in \C_c^{\infty}(\R^{m+n})$ satisfies $f \geq 0$ and $supp(f) \subset \{(x_{m+1}, \ldots, x_{m+n}) \neq 0\}$. Assume  $m \geq 2$. Consider the sequence of averages
    \begin{equation*}
        F_M(y):= \frac{1}{\sqrt{M}} \sum_{s=0}^{M-1}(\hat{f}(a^s)-  \mu_{\Y}(\hat{f} \circ a^s)) \text{     with } y \in \Y
    \end{equation*}
    where $a= (diag(e^{w_1},\ldots,e^{w_m},e^{-1},\ldots,e^{-1}),0)$ . Then the variance
    \begin{equation*}
        \sigma_f^2 := \sum_{s= -\infty}^{\infty} \int_{\R^d} f(a^sx) f(x) \, dx
    \end{equation*}
    is finite, and for every $\xi \in \R$,
    \begin{equation*}
        \mu_{\Y}(\{ y \in \Y : F_M(y) < \xi\}) \rightarrow Norm_{\sigma_f}(\xi)
    \end{equation*}
    as $N\rightarrow \infty$.
\end{thm}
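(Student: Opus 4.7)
The approach I would follow is the cumulant method of Bj\"orklund--Gorodnik. To establish the CLT it suffices to verify (i) convergence of the variance $\mu_{\Y}(F_M^2) \to \sigma_f^2$ and (ii) vanishing of all higher cumulants $\operatorname{Cum}_r(F_M) \to 0$ for $r \geq 3$; standard moment/cumulant techniques then force $F_M$ to converge in distribution to $\operatorname{Norm}_{\sigma_f}$. The key technical device is truncation: since $\hat f$ is unbounded on $\X$, I would replace $\hat f \circ a^s$ by $\hat f^{(L_s)} \circ a^s$ with $L_s = e^{\delta s}$ for a small $\delta > 0$. Proposition \ref{Siegel Aff 1} shows that $\mu_{\Y}\{\alpha(a^s y) > L_s\} \ll L_s^{-p}$ for any $p < m+n$, and the $L^1/L^2$ estimates in Lemma \ref{aff trun 2} imply that the truncation error in each term of $F_M$ has norm $O(L_s^{-(\tau-1)/2})$; summing and dividing by $\sqrt M$ shows that the tail contribution is negligible in both the variance and the cumulants. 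The hypothesis $m \geq 2$ is what ensures $\alpha \in L^p(\X)$ for some $p > 2$ via Proposition \ref{aff alpha 2}, giving enough integrability.

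For the variance, I would expand
\[
\mu_{\Y}(F_M^2) = \frac{1}{M} \sum_{s,s'=0}^{M-1} \operatorname{Cov}_{\mu_{\Y}}\bigl(\hat f \circ a^s,\, \hat f \circ a^{s'}\bigr).
\]
Applying the effective two-point mixing (the first Theorem after Theorem \ref{aff lemma 2}) to the truncated transforms, pairs $(s,s')$ with $|s-s'|$ large contribute $o(1)$. For the near-diagonal terms I would shift the index by $s$ and use $\mu_{\Y}$-invariance together with the Rogers-type identity of Proposition \ref{aff S R} to identify the contribution at lag $k=s'-s$ as $\int_{\R^d} f(a^k x) f(x)\,dx$. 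Summing in $k$ yields $\sigma_f^2$; finiteness of this sum follows from the support condition $\operatorname{supp}(f) \subset \{(x_{m+1},\ldots,x_{m+n}) \neq 0\}$, which forces $f(a^k x)f(x)$ to be supported on a set of exponentially small measure in $|k|$.

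For the higher cumulants, I would expand $\operatorname{Cum}_r(F_M)$ as $M^{-r/2}$ times a sum over $r$-tuples $(s_1,\ldots,s_r)$ of joint cumulants of the truncated $\hat f^{(L_{s_i})} \circ a^{s_i}$. The multi-point quantitative equidistribution statement (the Theorem in the block following Theorem \ref{aff lemma 2}), applied with a smooth bump approximating $\mu_{\Y}$, yields joint-moment factorizations with error $O\bigl(e^{-\delta D(s_1,\ldots,s_r)} \prod_i \|\hat f^{(L_{s_i})}\|_{C^k}\bigr)$. Standard cumulant manipulation then turns this into a joint-cumulant bound of the same form, which, using Lemma \ref{aff trun 2}, is $O(e^{-\delta D} L_{s_{\max}}^A)$. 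A combinatorial stratification of $r$-tuples by which indices are "close" versus "well separated" (the bookkeeping from \cite{BG}) yields $|\operatorname{Cum}_r(F_M)| \ll M^{1-r/2}(\log M)^{O(r)}$, which tends to $0$ for $r \geq 3$.

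The main obstacle is Step 3: one must balance the polynomial blow-up $L^A$ of $\|\hat f^{(L)}\|_{C^k}$ against the exponential decay $e^{-\delta D}$ in the multi-point mixing estimate, while simultaneously controlling the non-negligible diagonal strata in the combinatorial decomposition. Choosing the constants $\delta$, the growth rate of $L_s$, and the definition of "close" indices consistently across the three steps is the most delicate bookkeeping; the inhomogeneous/affine aspect of the problem is already absorbed into the preparatory material (the mixing estimate on $\X$ derived from that on $\X_d$, the Rogers formula for affine lattices, and the non-divergence bound on $\Y$), so no additional difficulty beyond the template of \cite{BG} should arise.
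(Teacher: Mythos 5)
Your proposal reproduces the approach of the paper, which itself follows the cumulant-method proof of Theorem 5.1 in Bj\"orklund--Gorodnik verbatim, with the only new input being the affine Rogers-type formula (Proposition~\ref{aff S R}) used to compute the variance and the support argument for its finiteness. One small slip in your variance step: $\mu_{\Y}$ is not $a$-invariant, so the reduction of the covariance at $(s,s')$ to a lag-$(s'-s)$ quantity is not by ``$\mu_{\Y}$-invariance'' but by the equidistribution of $a^s\Y$ in $(\X,\mu_{\X})$ followed by Proposition~\ref{aff S R} on $\X$; since the rest of your outline invokes exactly this equidistribution, the argument as intended is correct.
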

\begin{proof}
    All the steps given in the proof of [Thm 5.1, \cite{BG}] apply here directly to give the result here. We have tried to keep the same notations to aid the reader. The only change required is the computation of the right hand side equation (5.36) of \cite{BG}, i.e, the computation of the variance. We compute it here. Equation (5.36) of \cite{BG} gives
    \begin{equation*}
        \|F_M^{(L)}\|_{L^2(\Y)} \rightarrow \Theta_{\infty}(0) + 2 \sum_{s=1}^{\infty} \Theta_{\infty}(s) = \sum_{s= -\infty}^{\infty} \Theta_{\infty}(s)
    \end{equation*}
    as $M \rightarrow \infty$, where
    \begin{align*}
        \Theta_{\infty}(s) &:= \int_{\X}(\hat{f} \circ a^s) \hat{f}\, d\mu_{\X} -\mu_{\X}(\hat{f})^2 \\
        &=  \frac{1}{2} \left(\int_{\X}(\hat{f} \circ a^s + \hat{f})^2\, d\mu_{\X} - 2\int_{\X}( \hat{f})^2\, d\mu_{\X}  -2\mu_\X(\hat f)^2 \right) \\
    &= \frac{1}{2} \left(\left( \int_{\R^d}(f \circ a^s + f) \right)^2 + \int_{\R^d}(f \circ a^s + f )^2 - 2\left(  \left( \int_{\R^d} f \right)^2 + \int_{\R^d}( f )^2\right) -2 \left( \int_{\R^d} f \right)^2  \right)\\
    &= \int_{\R^{d}} f(a^s z) f(z)\, dz,
    \end{align*}
    where the third equality follows from Proposition \ref{aff S R}.
    Thus,
    \begin{equation}
    \label{var1}
     \sum_{s= -\infty}^{\infty} \Theta_{\infty}(s) = \sum_{s= -\infty}^{\infty}   \int_{\R^{d}} f(a^s x) f(x))\, dx.  
    \end{equation}
    Now, we show that sum in \eqref{var1} is finite. This is clear since $\Theta_{\infty}(s)$ is zero for all but finitely many values. To see this, we represent points $\bar{z} \in \R^{m+n}$ as $\bar{z}= (\bar{x},\bar{y})$ with $\bar{x} \in \R^m$ and $\bar{y} \in \R^n$.  Since, $f$ is bounded, and the compact support of $f$ is contained in $ \{ \bar{y} \neq 0\}$, we may assume without loss of generality that $f$ is the characteristic function of the set
    \begin{equation*}
        \{ (\bar{x},\bar{y}) \in \R^{m+n} : \upsilon_1 \leq \|\bar{y}\| \leq \upsilon_2, |x_i| \leq \vartheta \|\bar{y}\|^{-w_i} \text{, } i=1, \ldots,m \}
    \end{equation*}
    with $0< \upsilon_1 < \upsilon_2$ and $\vartheta >0$. Then
    \begin{equation}
    \label{fin}
     \Theta_{\infty}(s) =    \Theta_{\infty}(|s|) = \int_{\R^{m+n}} f(a^{|s|} \bar{z}) f(\bar{z}))\, d\bar{z} = \underset{\|\bar{y}\| \in [\upsilon_1,\upsilon_2] \cap [e^{|s|}\upsilon_1, e^{|s|}\upsilon_2]}{\int} \prod_{i=1}^{m} \left( \frac{2\vartheta}{\|\bar{y}\|^{-w_i}}\right) \, d\bar{y}.
    \end{equation}
    Now, $[\upsilon_1,\upsilon_2] \cap [e^{|s|}\upsilon_1, e^{|s|}\upsilon_2] \neq \phi$ only if $ |s| \leq \log{\vartheta_2}- \log{\vartheta_2}$.
    Hence, the sum \eqref{var1} is indeed finite. The rest of the proof now follows by resuming the argument from Section 5.3 in \cite{BG}.
\end{proof}

\subsection{CLT for counting functions and the proof of Theorem \ref{CLT2}} \hfill \\
We will need a little more notation before we begin the proof of Theorem \ref{CLT2}.

Let 
\begin{equation}
    \label{eq:def Omega}
    \Omega_T := \{(\bar{x},\bar{y}) \in \R^{m+n}:1\leq \|\bar{y}\| < T, |x_i|< \vartheta_i \|\bar{y}\|^{-w_i}, i=1,\ldots,m\}.
\end{equation}
Then, it is clear that $$\Delta_T(u,v)= |\Lambda_{u,v} \cap \Omega_T| + \mathcal{O}(1) \text{,   for } T>0.  $$ With $a= ({diag}(e^{w_1},\ldots,e^{w_m},e^{-1},\ldots,e^{-1}),0)$,  we note that for any integer $M \geq 1$, $$\Omega_{e^M} =  \sqcup_{s=0}^{M-1}a^{-s} \Omega_e, $$ and thus $$|\Delta_{u,v} \cap \Omega_{e^M}| = \sum_{s=0}^{M-1} \hat{\chi}(a^s \Lambda_{u,v}),$$ where $\chi$ denotes the characteristic function of the set $\Omega_e$. Define $$F_M:= \frac{1}{\sqrt{M}} \sum_{s=0}^{M-1} \left( \hat{\chi} \circ a^s - \mu_{\Y}(\hat{\chi} \circ a^s) \right).$$ We approximate $\chi$ by a family of non-negative functions $f_{\e} \in \Cc(\R^{m+n})$ whose supports are contained in an $\e$-neighbourhood of the set $\Omega_e$, and 
$$
\chi \leq f_\e \leq 1,\  \|f_\e -\chi\|_{L^1(\R^{m+n})} \ll \e, \ \|f_\e - \chi \|_{L^2(\R^{m+n})} \ll \e^{1/2}, \ \|f_\e\|_{C^k} \ll \e^{-k}.
$$
This approximation allows us to construct smooth approximation of the Siegel transform $\hat \chi$ in the following sense.

\begin{prop}\label{needednext}
        For every $s \geq 0$, 
        \begin{equation*}
            \int_{\Y} \left| \hat{f_{\epsilon}} \circ a^s - \hat{\chi} \circ a^s \right| \, d\mu_{\Y} \ll \epsilon + e^{-s}.
        \end{equation*}
    \end{prop}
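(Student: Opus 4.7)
The approach is to reduce the bound to an explicit sum over lattice points and then split that sum according to whether those lattice points lie in the interior of the annulus $\{1\le \|\bar y\|\le e\}$ or in an $\epsilon$-thin boundary shell. First, since $\chi\le f_\e$ by construction, the function $g_\e := f_\e-\chi$ is non-negative, bounded by $1$, and supported in an $\e$-neighbourhood of $\partial\Omega_e$. Moreover $\widehat{f_\e}\circ a^s-\widehat{\chi}\circ a^s=\widehat{g_\e}\circ a^s$, so it suffices to bound $\int_\Y \widehat{g_\e}(a^s\Lambda_{u,v})\,d\mu_\Y$. Because $\Omega_e$ is bounded away from $\{\bar y=0\}$, for $\e$ sufficiently small $g_\e$ also vanishes there.

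Next, following exactly the Fubini/unrolling computation used in the proof of Proposition \ref{siegel main aff}, I would sum over $\bar p\in\Z^m$ and integrate over $v\in[0,1]^m$ to replace the $\bar p$-sum by an integral over $\R^m$. The $u$-integration is then trivial and a change of variables $y_i=e^{w_i s}\tilde v_i$ (using $\sum w_i=n$) yields
\begin{equation*}
\int_\Y \widehat{g_\e}(a^s\Lambda_{u,v})\,d\mu_\Y
= \sum_{\bar q\in\Z^n\setminus\{0\}} e^{-ns}\, G(e^{-s}\bar q), \qquad
G(\bar y):=\int_{\R^m} g_\e(y,\bar y)\,dy,
\end{equation*}
the $\bar q=0$ term vanishing by the previous paragraph.

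The core of the argument is a two-scale bound on $G$. For $\bar y$ with $\|\bar y\|$ in the interior of $[1,e]$, the cross-section $\{x:g_\e(x,\bar y)>0\}$ is contained in the $\e$-tubular neighbourhood of the boundary of the box $\prod_i[-\vartheta_i\|\bar y\|^{-w_i},\vartheta_i\|\bar y\|^{-w_i}]$, whose $m$-dimensional measure is controlled by $\e$ times the surface area of the box, and the surface area is uniformly bounded for $\|\bar y\|\in[1,e]$; thus $G(\bar y)\ll\e$. For $\bar y$ in one of the two $\e$-shells $\{|\,\|\bar y\|-1|\le\e\}$ or $\{|\,\|\bar y\|-e|\le\e\}$, the full cross-section has measure $\ll\|\bar y\|^{-n}\ll 1$, so $G(\bar y)\ll 1$. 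Outside $[1-\e,e+\e]$ one has $G\equiv 0$.

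Plugging this in, split the sum according to whether $e^{-s}\bar q$ lies in the interior annulus or in one of the shells. The number of lattice points $\bar q$ with $\|e^{-s}\bar q\|\in[1,e]$ is $\ll e^{ns}$, and the interior contribution is bounded by $e^{-ns}\cdot e^{ns}\cdot \e=\e$. The number of lattice points of $\Z^n$ in an annulus of radii $e^s(1\pm\e)$ (or $e^s(e\pm\e)$) is, by comparing to the volume of the annulus plus a boundary error, $\ll \e\, e^{ns}+e^{(n-1)s}$, so the shell contribution is $\ll e^{-ns}(\e e^{ns}+e^{(n-1)s})=\e+e^{-s}$. Summing, $\int_\Y\widehat{g_\e}\circ a^s\,d\mu_\Y\ll \e+e^{-s}$, which gives the claim. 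The only slightly subtle step is the uniformity of the surface-area bound and the lattice counting in the $\e$-shell; once one works with the specific box geometry of $\Omega_e$ and the standard error term in counting lattice points in annular regions, both are elementary.
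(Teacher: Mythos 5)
Your proof is correct and follows essentially the same route as the paper. The paper bounds $f_\epsilon - \chi$ by the sum of three explicit characteristic functions (one for the ``thickened box'' region $\|\bar y\|\in[1,e]$, and two for the $\epsilon$-shells near radii $1$ and $e$) and then estimates each Siegel-transform integral via the same Fubini unrolling and lattice-count argument you use; your cross-section function $G(\bar y)$ with its two-regime bound ($G\ll\epsilon$ interior, $G\ll 1$ in the shells) is just a cleaner repackaging of that same decomposition, and the key lattice-point count $\ll \epsilon e^{ns}+e^{(n-1)s}$ for the annular shells is identical.
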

    \begin{proof}

        We observe that there exists $\vartheta_{i}(\epsilon)> \vartheta_i$ such that $\vartheta_i(\epsilon)= \vartheta_i + \mathcal{O}(\epsilon)$ and $f_{\epsilon} \leq \chi_{\epsilon}$, where $\chi_{\epsilon}$ denotes the characteristic function of the set 
        \begin{equation}
            \{(\bar{x},\bar{y}) \in \R^{m+n}: 1- \epsilon \leq \|\bar{y}\| \leq e +\epsilon, |x_i| < \vartheta_{i}(\epsilon) \|\bar{y}\|^{-w_i} \text{  for } i= 1,\ldots, m\}.
        \end{equation}

        Then, it follows that
        \begin{equation*}
            |\hat{f_{\epsilon}}(a^s \Lambda) - \hat{\chi}(a^s\Lambda)| = \sum_{v \in \Lambda \setminus \{0\}} (f_{\epsilon}(a^sv)- \chi(a^sv)) \leq \sum_{v \in \Lambda \setminus \{0\}} (\chi_{\epsilon}(a^sv)- \chi(a^sv)) .
        \end{equation*}
        It is clear that $\chi_{\epsilon}- \xi$ is bounded by the sum $\chi_{1,\epsilon} + \chi_{2,\epsilon} + \chi_{3,\epsilon}$ of the characteristic functions of the sets
        \begin{eqnarray*}
        &\{& (\bar{x},\bar{y}) \in \R^{m+n} : 1-\epsilon \leq \|\bar{y}\| \leq 1,  |x_i| < \vartheta_{i}(\epsilon) \|\bar{y}\|^{-w_i} \text{  for } i= 1,\ldots, m\} \\
        &\{&  (\bar{x},\bar{y}) \in \R^{m+n} : e \leq \|\bar{y}\| \leq  e+ \epsilon,  |x_i| < \vartheta_{i}(\epsilon) \|\bar{y}\|^{-w_i} \text{  for } i= 1,\ldots, m\} \\
        &\{&  (\bar{x},\bar{y}) \in \R^{m+n} : 1 \leq \|\bar{y}\| \leq  e,  |x_i| < \vartheta_{i}(\epsilon) \|\bar{y}\|^{-w_i} \text{  for } i= 1,\ldots, m, |x_j|\geq \vartheta_j \|\bar{y}\|^{-w_j} \text{  for some } j\}.
        \end{eqnarray*}
        respectively. In particular, we obtain that
        \begin{equation*}
            \hat{f}_{\epsilon}(a^s \Lambda) - \hat{\chi}(a^s\Lambda) \leq \bar{\chi}_{1,\epsilon}(a^s \Lambda) + \bar{\chi}_{2,\epsilon}(a^s \Lambda) + \bar{\chi}_{3,\epsilon}(a^s \Lambda).
        \end{equation*}
        Hence, it remains to show that for $j=1,2,3$, 
        \begin{equation*}
            \int_{\Y} (\hat{\chi}_{j, \epsilon} \circ a^s) \, d\mu_{\Y} \ll \epsilon + e^{-s}.
        \end{equation*}
        \begin{align*}
            \int_{\Y} (\hat{\chi}_{1, \epsilon} \circ a^s) \, d\mu_{\Y} &= \underset{(1- \e)e^s \leq \|\bar{q}\| \leq e^s}{ \sum} \prod_{i=1}^{m} \left( \int_{[0,1]^n}  \left( \underset{\bar{p_i} \in \Z}{ \sum} \int_{[0,1]}  \chi_{\vartheta_i(\e)\|\bar{q}\|^{-w_i}}'(p_i+ \langle \bar{u}_i, \bar{q} \rangle + v_i) \, dv_i \right) \, d\bar{u_i} \right) \nonumber \\
            &=\underset{(1- \e)e^s \leq \|\bar{q}\| \leq e^s}{ \sum} \prod_{i=1}^{m} \left( \int_{[0,1]^n}  \left(  \int_{\R}  \chi_{\vartheta_i(\e)\|\bar{q}\|^{-w_i}}'( \langle \bar{u}_i, \bar{q} \rangle + v_i) \, dv_i \right) \, d\bar{u_i} \right) \nonumber \\
             &= \underset{(1- \e)e^s \leq \|\bar{q}\| \leq e^s}{ \sum} \prod_{i=1}^{m} \left( \int_{[0,1]^n} (2\vartheta_i(\e)\|\bar{q}\|^{-w_i}) \, d\bar{u}_i \right) \nonumber \\
             &\ll \underset{(1- \e)e^s \leq \|\bar{q}\| \leq e^s}{ \sum} \|\bar{q}\|^{-n} \\
             &\ll e^{-ns} |\{ \bar{q} \in \Z^n: (1-\epsilon)e^s \leq \|\bar{q}\| \leq e^s \}|
         \end{align*}
        The number of integral points in the region $\{ (1-\epsilon)e^s \leq \|\bar{q}\| \leq e^s \}$ can be estimated in terms of its volume. Namely, there exists $r>0$ (depending only on the norm) such that 
        \begin{equation*}
           |\{ \bar{q} \in \Z^n: (1-\epsilon)e^s \leq \|\bar{q}\| \leq e^s \}| \ll  |\{ \bar{y} \in \R^n: (1-\epsilon)e^s -r \leq \|\bar{q}\| \leq e^s +r \}|.
        \end{equation*}
        Hence, 
        \begin{eqnarray*}
            \int_{\Y} (\hat{\chi}_{1, \epsilon} \circ a^s) \, d\mu_{\Y} &\ll& e^{-ns} ((e^s +r)^n- ((1-\epsilon)e^s-r)^n) \\
            &=& (1 +re^{-s})^n- ((1-\epsilon)-re^{-s})^n \\
            &\ll& \epsilon + e^{-s}.
        \end{eqnarray*}
        The integral $\hat{\chi}_{2,\epsilon} \circ a^s$ can be estimated similarly.\\ 

       A similar calculations of the integral over $\hat{\chi}_{3,\epsilon} \circ a^s$ give
        \begin{align*}
           \int_{\Y} (\hat{\chi}_{3, \epsilon} \circ a^s) \, d\mu_{\Y} &=
           \underset{e^s \leq \|\bar{q}\| \leq e^{s+1}}{ \sum} \prod_{i=1}^{m} \left( \int_{[0,1]^n}  \left( \underset{\bar{p_i} \in \Z}{ \sum} \int_{[0,1]}  \chi_{\vartheta_i(\e)\|\bar{q}\|^{-w_i}}'(p_i+ \langle \bar{u}_i, \bar{q} \rangle + v_i) \, dv_i \right) \, d\bar{u_i} \right) \\
           & - \underset{e^s \leq \|\bar{q}\| \leq e^{s+1}}{ \sum} \prod_{i=1}^{m} \left( \int_{[0,1]^n}  \left( \underset{\bar{p_i} \in \Z}{ \sum} \int_{[0,1]}  \chi_{\vartheta_i\|\bar{q}\|^{-w_i}}'(p_i+ \langle \bar{u}_i, \bar{q} \rangle + v_i) \, dv_i \right) \, d\bar{u_i} \right)
           \\ &=  \underset{e^s \leq \|\bar{q})\| \leq e^{s+1}}{ \sum}  2^m \left(\prod_{i=1}^m\vartheta_{i}(\epsilon) - \prod_{i=1}^m \vartheta_i \right)   \|\bar{q}\|^{-n} \\
           &\ll \epsilon \left(\underset{e^s \leq \|\bar{q}\| \leq e^{s+1}}{ \sum}  \|\bar{q}\|^{-n} \right) \\
           &\ll \epsilon.
        \end{align*}
        This completes the proof of the proposition.
\end{proof}

\begin{thm}
\label{CLT2'}
    If $m \geq 2$, then for every $\xi \in \R$, 
    \begin{equation*}
        \mu_{\Y}(\{ y \in \Y : F_M(y) < \xi\}) \rightarrow Norm_{\sigma}(\xi)
    \end{equation*}
    as $M\rightarrow \infty$, where 
    \begin{equation*}
    \sigma^2= 2^m \vartheta_1\vartheta_2 \ldots \vartheta_m \omega_n \text{ with } \omega_n := \int_{S^{n-1}} d\bar{z}.
\end{equation*}
\end{thm}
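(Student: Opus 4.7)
The plan is to deduce the CLT for $F_M$ from the already-established smooth CLT by a standard sandwich/approximation argument, since the smooth CLT only applies to $\hat f$ with $f\in C_c^\infty(\mathbb{R}^{m+n})$ while $\chi$ is merely an indicator function. First, I would complement the family $f_\epsilon$ constructed before Proposition \ref{needednext} (which satisfies $\chi \leq f_\epsilon$ with support in a small neighbourhood of $\Omega_e$ bounded away from $\{\bar y=0\}$) with a symmetric family $g_\epsilon \in C_c^\infty(\mathbb{R}^{m+n})$ satisfying $g_\epsilon \leq \chi$, $\|\chi - g_\epsilon\|_{L^1} \ll \epsilon$, $\|\chi - g_\epsilon\|_{L^2}\ll \epsilon^{1/2}$, and $\mathrm{supp}(g_\epsilon)\subset\{\bar y\neq 0\}$. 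Both $f_\epsilon$ and $g_\epsilon$ meet the hypotheses of the preceding CLT for smooth Siegel transforms, so the averages
\[
F_M^{f_\epsilon}:= \frac{1}{\sqrt M}\sum_{s=0}^{M-1}(\hat{f}_\epsilon\circ a^s - \mu_{\mathcal{Y}}(\hat{f}_\epsilon\circ a^s)),
\qquad F_M^{g_\epsilon}\text{ analogously},
\]
converge under $\mu_{\mathcal{Y}}$ to the Gaussians $\mathrm{Norm}_{\sigma_{f_\epsilon}}$ and $\mathrm{Norm}_{\sigma_{g_\epsilon}}$.

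Next, I would control the difference $F_M - F_M^{f_\epsilon}$ (and similarly $F_M^{g_\epsilon}-F_M$) in $L^2(\mathcal Y)$, uniformly in $M$. Proposition \ref{needednext} gives the diagonal $L^1$ bound $\|\hat{f}_\epsilon\circ a^s - \hat\chi\circ a^s\|_{L^1(\mathcal Y)}\ll \epsilon + e^{-s}$, and the same technique combined with the Rogers-type second moment formula \eqref{Af Rog1} (applied to the non-negative excess $\hat f_\epsilon - \hat \chi$, supported on lattice points hitting a boundary layer of volume $\ll \epsilon$) upgrades this to $\|\hat f_\epsilon\circ a^s - \hat\chi\circ a^s\|_{L^2(\mathcal Y)}^2 \ll \epsilon + e^{-s}$. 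The off-diagonal cross terms $\int_{\mathcal Y}((\hat f_\epsilon-\hat\chi)\circ a^s)((\hat f_\epsilon-\hat\chi)\circ a^t)\,d\mu_{\mathcal Y}$ are then handled by the mixing estimate (Corollary to Theorem \ref{Aff Mixing}) after replacing $\hat f_\epsilon - \hat \chi$ by its truncation $(\hat f_\epsilon-\hat\chi)\eta_L$ with $L$ slowly growing, exactly as in the smooth-CLT computation; summing over $0\le s,t\le M-1$ and dividing by $M$ yields
\[
\|F_M - F_M^{f_\epsilon}\|_{L^2(\mathcal Y)}^2 \ll \epsilon + M^{-1},
\]
uniformly in $M$. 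A Chebyshev argument then gives, for every $\delta>0$,
\[
\limsup_{M\to\infty}\mu_{\mathcal Y}\{|F_M - F_M^{f_\epsilon}|>\delta\} \ll \epsilon/\delta^2,
\]
and similarly for $g_\epsilon$.

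Combining the smooth CLT with these comparison estimates produces the sandwich
\[
\mathrm{Norm}_{\sigma_{g_\epsilon}}(\xi-\delta) - O(\epsilon/\delta^2) \le \liminf_M \mu_{\mathcal Y}\{F_M<\xi\} \le \limsup_M \mu_{\mathcal Y}\{F_M<\xi\} \le \mathrm{Norm}_{\sigma_{f_\epsilon}}(\xi+\delta) + O(\epsilon/\delta^2).
\]
Letting $\epsilon\to 0$ followed by $\delta\to 0$ collapses both sides to $\mathrm{Norm}_\sigma(\xi)$, provided $\sigma_{f_\epsilon}^2,\sigma_{g_\epsilon}^2\to\sigma^2$. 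For this variance identification, I use the formula from the smooth CLT,
$\sigma_f^2 = \sum_{s\in\mathbb{Z}}\int_{\mathbb{R}^{m+n}} f(a^sx)f(x)\,dx$. Crucially, for $f=\chi=\mathbf{1}_{\Omega_e}$ the sets $a^{-s}\Omega_e$ are pairwise disjoint (this is precisely the decomposition $\Omega_{e^M}=\sqcup_{s=0}^{M-1}a^{-s}\Omega_e$), so only the $s=0$ term survives, giving
\[
\sigma_\chi^2 = \mathrm{vol}(\Omega_e) = \int_{1\le\|\bar y\|\le e}\prod_{i=1}^m\bigl(2\vartheta_i\|\bar y\|^{-w_i}\bigr)\,d\bar y = 2^m\Bigl(\prod_{i=1}^m\vartheta_i\Bigr)\omega_n\int_1^e\frac{dr}{r} = 2^m\Bigl(\prod_{i=1}^m\vartheta_i\Bigr)\omega_n
\]
by polar coordinates and $\sum w_i = n$; dominated convergence then ensures $\sigma_{f_\epsilon}^2, \sigma_{g_\epsilon}^2\to\sigma_\chi^2=\sigma^2$. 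The main obstacle in this program is the uniform-in-$M$ $L^2$ comparison $\|F_M - F_M^{f_\epsilon}\|_{L^2(\mathcal Y)}^2\ll \epsilon + M^{-1}$: the naive term-by-term estimate using only Proposition \ref{needednext} produces a bound of size $\sqrt M\,\epsilon$, which is useless, so one genuinely needs the Rogers-type second moment together with the mixing decay to beat down the off-diagonal contributions.
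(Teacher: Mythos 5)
Your proposal takes a genuinely different route from the paper. The paper follows the proof of Theorem~6.1 in \cite{BG} essentially verbatim: it uses a \emph{one-sided} smooth approximation $\chi \leq f_\e$, applies the cumulant-method CLT to the truncated smooth Siegel transform $\hat f_\e^{(L)}$ with $\e=\e(M)\to 0$ and $L=L(M)\to\infty$ slowly, controls $F_M - \Tilde F_M^{(\e,L)}$ only in $L^1(\Y)$ via Proposition~\ref{needednext} (and then in probability), and finally computes the limiting variance $\Theta^{(\e)}_\infty(0)+2\sum_{s\geq1}\Theta^{(\e)}_\infty(s)\to\sigma^2$ using Proposition~\ref{aff S R}. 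You instead propose a two-sided sandwich $g_\e\le\chi\le f_\e$, a uniform-in-$M$ $L^2(\Y)$ comparison $\|F_M - F_M^{f_\e}\|_{L^2(\Y)}^2 \ll \e + M^{-1}$, and a Chebyshev argument. Your variance identification ($\sigma_\chi^2 = \mathrm{vol}(\Omega_e) = 2^m\prod\vartheta_i\,\omega_n$ because the sets $a^{-s}\Omega_e$ are pairwise disjoint) is correct and cleaner than what the paper writes, which incidentally contains a stray factor of~$2$ in its expression for $\Theta_\infty(0)$.

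However, there is a genuine gap in your proposed $L^2(\Y)$ comparison, which you yourself flag as the crux. First, the diagonal bound: the Rogers-type identity \eqref{Af Rog1} computes $\|\hat g\|_{L^2(\X)}^2$, not $\|\hat g\circ a^s\|_{L^2(\Y)}^2$. The flow $a^s$ preserves $\mu_\X$ but does not map $\Y$ into $\Y$, and $\Y$ is a positive-codimension submanifold of $\X$, so the two second moments are entirely different objects. (The paper obtains $L^2(\Y)$ bounds only by direct lattice-sum computation, as in Proposition~\ref{siegel main aff}, not by Rogers.) Second, and more seriously, the off-diagonal bound: the mixing estimate requires $C^k$ control of both test functions, but $(\hat f_\e - \hat\chi)\eta_L$ is \emph{not} smooth, because $\hat\chi$ is a Siegel transform of an indicator and is discontinuous on $\X$; multiplying by the smooth cutoff $\eta_L$ does not repair this. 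So there is no $\|(\hat f_\e - \hat\chi)\eta_L\|_{C^k}$ to feed into the mixing estimate. The paper avoids both issues by never forming the difference $\hat f_\e - \hat\chi$ inside a second moment: it controls that difference only in $L^1(\Y)$ (which is achieved by elementary lattice-point counting in Proposition~\ref{needednext}), converts this to a convergence-in-probability statement by Markov, and runs the entire cumulant machinery on the single smooth object $\hat f_\e\eta_L$ with $\e(M)$ decaying fast enough that $\sqrt{M}\,\e(M)\to 0$. If you want to salvage your sandwich, you would need to replace both the Rogers invocation and the mixing step by explicit $L^2(\Y)$ computations of the same flavor as Proposition~\ref{siegel main aff}, at which point you are essentially rederiving the BG argument rather than shortcutting it.
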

\begin{proof}
    The same steps given in the proof of [Thm 6.1, \cite{BG}] work here. The only changes needed are that we need to reprove [Prop 6.2 \cite{BG}] in our case, and we have done this is Proposition \ref{needednext} above. Also, we need to compute the variance $\sigma^2$. As in proof of Theorem 6.1 of \cite{BG}, we get that
        \begin{equation}
            \label{aff eq: main variance}
            \left\|\Tilde{F}^{(\e,L)}_M\right\|_{L^2(\Y)}^2 =
            \Theta^{(\e)}_\infty(0)+2\sum_{s=1}^{K-1}\Theta^{(\e)}_\infty (s)+o(1),
        \end{equation}
    where 
    $$
    \Theta^{(\e)}_\infty(s):=\int_{\X} (\hat f_\e\circ a^s)\hat f_\e\, d\mu_\X -\mu_\X(\hat f_\e)^2.
    $$
    Using Proposition \ref{aff S R}, we get that
\begin{align*}
    \Theta^{(\e)}_\infty (s) &= \int_{\X} (\hat f_\e\circ a^s)\hat f_\e\, d\mu_\X -\mu_\X(\hat f_\e)^2 \\
    &=  \frac{1}{2} \left(\int_{\X}(\hat{f}_\e \circ a^s + \hat{f}_\e)^2\, d\mu_{\X} - 2\int_{\X}( \hat{f}_\e)^2\, d\mu_{\X}  -2\mu_\X(\hat f_\e)^2 \right) \\
    &= \frac{1}{2} \left(\left( \int_{\R^d}(f_\e \circ a^s + f_\e) \right)^2 + \int_{\R^d}(f_\e \circ a^s + f_\e )^2 - 2\left(  \left( \int_{\R^d} f_\e \right)^2 + \int_{\R^d}( f_\e )^2\right) -2 \left( \int_{\R^d} f_\e \right)^2  \right)\\
    &= \int_{\R^{m+n}} f_\e(a^s z) f_\e(z)\, dz.
\end{align*}
Now, for $s \in \Z$ define 
\begin{equation*}
    \Theta_{\infty}(s)= \int_{\R^{m+n}} \chi(a^s z) \chi(z) \, dz.
\end{equation*}
It is clear that 
\begin{align}
    | \Theta^{(\e)}_\infty (s) - \Theta_{\infty}(s)| &\leq \int_{\R^{m+n}} |f_\e(a^s z)- \chi(a^s z )| |f_\e(z)|\, dz + \int_{\R^{m+n}} |\chi(a^s z)| |f_\e(z)- \chi(z)|\, dz \\
    &\leq \|f_\e - \chi\|_{L^2(\R)} \|f_\e\|_{L^2(\R)} + \|\chi\|_{L^2(\R)} \|f_\e- \chi\|_{L^2(\R)} \\
    &\ll \e^{1/2}.
\end{align}
Also, note that $\Theta_{\infty}^{(\e)}(s)$ is zero for all but finitely many values. To see this, first note that by construction of $f_\e$, there exists $0< \upsilon_1 < \upsilon_2$ and $\vartheta >0$ such that the characteristic function $\psi$ of the set
    \begin{equation*}
        \{ (\bar{x},\bar{y}) \in \R^{m+n} : \upsilon_1 \leq \|\bar{y}\| \leq \upsilon_2, |x_i| \leq \vartheta \|\bar{y}\|^{-w_i} \text{, } i=1, \ldots,m \}
    \end{equation*}
    satisfy $ f_\e(\bar z ) \leq \psi(\bar{z}) $ for all $\e >0$. Then
    \begin{align*}
     \Theta_{\infty}^{(\e)}(s) &=    \Theta_{\infty}^{(\e)}(|s|) \\ &= \int_{\R^{m+n}} f_\e(a^{|s|} \bar{z}) f_\e(\bar{z})\, d\bar{z} \\ & \leq \int_{\R^{m+n}} \psi(a^{|s|} \bar{z}) \psi(\bar{z}))\, d\bar{z} \\  &= \underset{\|\bar{y}\| \in [\upsilon_1,\upsilon_2] \cap [e^{|s|}\upsilon_1, e^{|s|}\upsilon_2]}{\int}  2^m\vartheta^m\|\bar{y}\|^{-n} \, d\bar{y}.
    \end{align*}
    Now, $[\upsilon_1,\upsilon_2] \cap [e^{|s|}\upsilon_1, e^{|s|}\upsilon_2] \neq \phi$ only if $ |s| \leq \log{\vartheta_2}- \log{\vartheta_2}$.
   Thus $\Theta_{\infty}^{(\e)}(s)$ is zero for all but finitely many values, uniformly on $\e$.

 Also, by explicit computation it is easy to see that $\Theta_{\infty}(s)=0$ for all $s \neq 0.$ For $s=0$, we have $\Theta_{\infty}(0)$ equals the measure of the set $\Omega_e$, defined as in \eqref{eq:def Omega}. Hence, $\Theta_{\infty}(0)= 2^{m+1} \vartheta_1\vartheta_2 \ldots \vartheta_m \omega_n \text{ with } \omega_n := \int_{S^{n-1}}  d\bar{z}$. 

 Combining the above statements, we get that since $\e(M) \rightarrow 0$ as $M \rightarrow \infty$, so  
 \begin{align*}
    \Theta^{(\e)}_\infty(0)+2\sum_{s=1}^{K(M)-1}\Theta^{(\e)}_\infty (s) \rightarrow 2^m \vartheta_1\vartheta_2 \ldots \vartheta_m \omega_n,
\end{align*}
which proves that 
\begin{equation}
    \label{aff variance cal 1}
    \Theta^{(\e)}_\infty(0)+2\sum_{s=1}^{K-1}\Theta^{(\e)}_\infty (s) \rightarrow 2^m \vartheta_1\vartheta_2 \ldots \vartheta_m \omega_n
\end{equation} as $M \rightarrow \infty$.
Now, resuming the steps in proof of Theorem 6.1 of \cite{BG} will give us the proof.
\end{proof}

\begin{proof}[Proof of Theorem \ref{CLT2}]

    The only step that require change is Lemma 6.3 of \cite{BG}. We need to compute  $\sum_{s=0}^{M-1} \int_{\Y} \hat{\chi}(a^sy) \, d\mu_{\Y}(y)$. 
We observe that
$$
\sum_{s=0}^{M-1}\int_{\Y} \hat \chi(a^s y)\,d\mu_\Y(y)
=\int_{\Y} \hat \Xi_M( y)\,d\mu_\Y(y),
$$
where $\Xi_M$ denotes the characteristic function of the set
$$
\left\{(\overline{x},\overline{y})\in \R^{m+n}:\, 1\le \|\overline{y}\|< e^M, \;\; |x_i|<\vartheta_i\, \|\overline{y}\|^{-w_i},\; i=1,\ldots,m\right\}.
$$
As in \eqref{aff int open 1}, we obtain 
\begin{align*}
\int_{\Y} \hat \Xi_M( y)\,d\mu_\Y(y) &=\sum_{1\le \|\overline{q}\|< e^M}   \prod_{i=1}^m \left(\int_{[0,1]^n} \left( \sum_{p_i\in \mathbb{Z}}  \int_{[0,1]}\chi'_{\vartheta\|\bar{q} \|^{-w_i}}\left(p_i+\left<\overline{u}_i,\overline{q}\right> + v_i\right) \, dv_i \right)\, d\overline{u}_i\right) \\
 &=\sum_{1\le \|\overline{q}\|< e^M}   \prod_{i=1}^m \left(\int_{[0,1]^n} \left(  \int_{\R}\chi'_{\vartheta\|\bar{q} \|^{-w_i}}\left(\left<\overline{u}_i,\overline{q}\right> + v_i\right) \, dv_i \right)\, d\overline{u}_i\right) \\
 &= \sum_{1\le \|\overline{q}\|< e^M}   \prod_{i=1}^m \left(\int_{[0,1]^n}  2 \vartheta_i\, \|\overline{q}\|^{-w_i} \right) \, dv_i \, d\overline{u}_i \\
&= 2^m \vartheta_1 \ldots\vartheta_m  \sum_{1\le \|\overline{q}\|< e^M} \|\bar{q}\|.
\end{align*}
Now, using that $\|\bar{y_2}\|^{-n}= \|y_1\|^{-n} + \mathcal{O}(\|\bar{y}_1\|^{-n-1})$ when $\|\bar{y_1} - \bar{y_2}\| \ll 1$, we deduce that $$\underset{1 \leq \|\bar{q}\| \leq e^M}{ \sum} \|\bar{q}\|^{-n} = \int_{1 \leq \|\bar{y}\| \leq e^M}   \|\bar{y}\|^{-n} \, d\bar{y} +\mathcal{O}(1),$$ and expressing the integral in polar co-ordinates, we obtain $$\int_{1 \leq \|\bar{y}\| \leq e^M}   \|\bar{y}\|^{-n} \, d\bar{y} = \int_{S^{n-1}} \int_{\|z\|^{-1}}^{\|z\|^{-1}e^M}  \|r \bar{z}\|^{-n} r^{n-1} \, dr \, d\bar{z}= \omega_nM +\mathcal{O}(1).$$ This proves the corresponding lemma. The remaining argument is identical.
\end{proof}

\section{Proof of the congruence case}
\subsection{Introduction} \label{Con Intro}  \hfill \\

This section is devoted to proving Theorem \ref{CLT3}. For this entire section, we again fix $m,n \in \N$ with $m \geq 2$ and $\|.\|$, a norm on $\R^n$. We also fix $\vartheta_1, \ldots, \vartheta_m > 0$ and $w_1, \ldots, w_m >0 $  satisfying $$w_1  + \ldots + w_m = n.$$  Also fix a vector $v =(v_1, \ldots,v_{m+n}) \in \Z^{m+n}$ and $N \in \N$ such that $N \geq 1$ and $\gcd(v,N)=1$. Let us denote $v':= (v_1, \ldots, v_m) \in \Z^m$ and $v''= (v_{m+1}, \ldots, v_{m+n})$. Then, define for $T>0$, $u \in \M_{m \times n}(\R)$
\begin{align*}
       \Delta_T(u) = |\{ (\bar{p},\bar{q}) \in \Z^m \times \Z^n: 0 < \|\bar{q}\| < T \ \text{ and } \ |p_i + L_{u}^{(i)}(q_1, \ldots, q_n)| < \vartheta_i \|\bar{q}\|^{-w_i} \\ \text{ for all } \ i=1, \ldots , m  \ \text{ and }
       (\bar{p}, \bar{q}) = v \mod N\}|.
\end{align*}
where $$L_{u}^{(i)} (x_1, \ldots, x_n) = \sum_{j=1}^n u_{ij}x_j, \text{    for all } i= 1, \ldots,m.$$

  Set $G \  := \ SL_{m+n}(\R)$ and $\Gamma:= \{ A \in SL_{m+n}(\Z) : A \V = \V \mod N\}$. Let $\X$ denote the space of affine unimodular lattice of the form  $A(\Z^{m+n} + \V / N)$ with $A \in G$. By definition, it is clear that we may consider $\X$ as a homogeneous space of $G$, under the covering map $$A \mapsto A(\Z^{m+n} + \V / N).$$ Since, the kernel of above map is $\Gamma$, we get that $\X \backsimeq G/\Gamma$. With this identification, $\X$ carries a natural transitive action of $G$, given by 
  \begin{equation*}
      A. \Lambda = A. (B(\Z^{m+n} + \V / N)) = (AB)(\Z^{m+n} + \V / N)
  \end{equation*}
  for all $\Lambda= B(\Z^{m+n} + \V / N) \in \X.$
   Note that $\Gamma \supset \Gamma(N)$, where $$\Gamma(N)= \left\{ A \in \SL_{m+n}(\Z): A = I \mod N\right\}.$$ It is easy to see that $\Gamma(N)$ is a finite index subgroup of $\SL_{m+n}(\Z)$, indeed it is a normal subgroup of $\SL_{m+n}(\Z)$ and $\SL_{m+n}(\Z)/ \Gamma(N) \simeq \SL_{m+n}(\Z/ N\Z)$. Thus, $\Gamma$ is also a finite index subgroup of $\SL_{m+n}(\Z)$, hence also a lattice in $\SL_{m+n}(\R)$. So, there exists $\mu_{\X}$, the G-invariant probability measure on $\X$. Let us denote by $m_G$ the bi-invariant Haar measure on G, normalized so that, such that fundamental region for action of $\Gamma $ on $G$ has measure 1. Denote by $U$ the subgroup
\begin{equation}
    U:= \left\{ \begin{pmatrix} I_m & u \\ 0 & I_{n} \end{pmatrix} : u \in \M_{m \times (n)}(\R) \right\} < G.
\end{equation}
Let $\Y := U(\Z^{m+n} + \V/N)  = \{ u. (\Z^{m+n}+ v/N): u \in U\} \subset \X$. 
 We denote by $\mu_{\Y}$ the U-invariant probability measure on $\Y$ . Note that elements of $\Y$ look like
 \begin{equation}
 \label{defLambda3}
     \Lambda_{u}= \left\{ \left( \frac{Np_1 + v_1+ \langle \bar{u}_1, N\bar{q} + v'' \rangle }{N} ,\ldots, \frac{Np_m + v_m+ \langle \bar{u}_m, N\bar{q} + v'' \rangle }{N} , \frac{N\bar{q} +v'' }{N}\right) : (\bar{p}, \bar{q}) \in \Z^m \times \Z^n \right\} 
 \end{equation}
 where   $u \in \M_{m \times n}(\R)$, $\bar{u}_i$ denotes the $i$-th row of the matrix $\bar{u}$ and $\langle .,.\rangle$ denotes the standard inner product in $\R^n$.  \\
We set $d:= m+n$ and define  $\X_d= G/\SL_d(\Z)$ and $\Y_d = (U.\SL_d(\Z))/\SL_d(\Z).$ Denote by $\mu_{\X_d}$ the unique $G$-invariant probability measure on $\X_d$. Similarly denote by $\mu_{\Y_d}$, the unique $U$-invariant measure on $\Y_d$. Let us denote by $\pi$ the usual finite index covering map from $\X$ onto $\X_d$, which exists since $\Gamma$ is a finite index subgroup of $\SL_d(\Z)$. Then, it is clear that $\pi: (\X, \mu_{\X}) \rightarrow (\X_d, \mu_{\X_d})$ is a measure preserving map.  Also, it is clear that $\pi : \X \rightarrow \X_d$ maps $\Y$ onto $\Y_d$ and the restriction map $\pi_{|_{\Y}} : (\Y, \mu_{\Y}) \rightarrow (\Y_d,\mu_{\Y_d})$ is a measure preserving map. \\\\
An important observation here is that the fundamental domain for the action of $U \cap \Gamma(N)$ on $U$ is $\left\{ \begin{pmatrix} I_m & u \\ 0 & I_{n} \end{pmatrix} : u \in \M_{m \times n}([0,N)) \right\} \approx \M_{m \times n}([0,N))  .$ Thus, the restriction of the covering map $U \rightarrow \Y \simeq U/(U \cap \Gamma)$ to the space $\M_{m \times n}([0,N))$ is still a covering map (since $\Gamma \supset \Gamma(N)$). Thus if $m_U$ denotes the usual Lebesgue measure on $U \simeq \M_{m \times n}(\R)$, the map $$(\M_{m \times n}([0,N)), \frac{1}{N^{nm}} m_U) \rightarrow (\X, \mu_{\X})$$ is a measure preserving map. More precisely, for any $f \in L^1(\X)$ we have 
\begin{equation}
    \label{con covering preserves measure}
    \int_{\X} f(x)\, d\mu_{\X}(x)= \frac{1}{N^{nm}} \int_{\M_{m \times n}([0,N])} f(\Lambda_u) \, dm_U(u). 
\end{equation}
This observation will be used in several proofs ahead.

\subsection{Mixing of the $a^s \Y$ action on $\X$} \label{Con Mix Property} \vspace{0.3in} \hfill \\

Once again, we recall notation from \cite{BG3}. Fix positive weights $w_1, w_2, \ldots, w_{m+n}$ , satisfying $$\sum_{i=1}^{m} w_i = \sum_{i=m+1}^{m+n} w_i$$ and denote by $(a_t)$ the one parameter semi-subgroup $$a_t := diag(e^{w_1t}, \ldots, e^{w_{m}t}, e^{-w_{m+1}t}, \ldots, e^{-w_{m+n}t}) \ \text{, } t>0. $$

Again, we set
\begin{equation}
    b_t := diag(e^{t/m}, \ldots , e^{t/m}, e^{-t/n}, \ldots, e^{-t/n}), \ t>0
\end{equation}
to be the equal weight subgroup which coincides with $(a_{t})$ with the special choice of exponents $$w_1= \ldots= w_m = \frac{1}{m}, \ \ \ w_{m+1}= \ldots= w_{m+n}= \frac{1}{n}.$$

Fix an ordered basis ${Y_1,Y_2,\ldots, Y_r}$ of $Lie(G)$. Clearly, every monomial $Z= Y_1^{l_1}\ldots Y_r^{l_r} $ defines a differential operator by \eqref{diff hom}. We now define the norms $\|.\|_{L_k^2(\X)}$ and $\|.\|_{C^k}$ on $\C_c^{\infty} (\X)$ by equations \eqref{eq: def Lk} and \eqref{eq: def Ck} respectively. 
Analogous to equation \eqref{xi ineq 2}, one can easily show the existence of a constant $\xi = \xi(m,n,k)$ (which also depends on fixed choice of weights $w_1, \ldots, w_{m}$) such that 
 \begin{equation}
 \label{xi ineq 3}
     \|\phi \circ a^t\| \ll e^{\xi t} \|\phi\|_{C^k}, \text{    for all $t \geq 0$ and } \phi \in \C_c^{\infty}(\X), 
 \end{equation}
 where the suppressed constants are independent of $t$ and $\phi$.

 The following quantitative estimate on correlations of smooth functions on $\X$ is proved in \cite{BG3}.
\begin{thm}[\cite{BG3}, Thm. 1.1 ]
\label{Con Mixing}
There exists $\gamma > 0$ and $k \geq 1$ such that for all $\phi_1, \phi_2 \in \C_c^{\infty}(\X)$ and $g \in G$,
\begin{equation}
    \int_{\X} \phi_1(b_tx) \phi_2(x) \, d\mu_{\X}(x) = \left( \int_{\X} \phi_1(x) \, d\mu_{\X}(x)\right) \left( \int_{\X} \phi_2(x) \, d\mu_{\X}(x) \right) + \mathcal{O}\left( e^{-\gamma t} \|\phi_1\|_{\C^k}\|\phi_2\|_{\C^k} \right)
\end{equation}
where $b_t= diag(e^{t/m}, \ldots, e^{t/m}, e^{-t/n}, \ldots, e^{-t/n}). $
\end{thm}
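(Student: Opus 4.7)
The plan is to derive Theorem~\ref{Con Mixing} directly from the general quantitative mixing estimate on $G/\Gamma$ for $\Gamma$ a lattice in $G = SL_d(\R)$ with $d = m+n \geq 3$ (as guaranteed by the hypothesis $m \geq 2$, $n \geq 1$). The only difference between this statement and Theorem~\ref{mixing hom} is that $SL_d(\Z)$ has been replaced by the congruence subgroup $\Gamma = \{A \in SL_d(\Z): A\mathbf{v} \equiv \mathbf{v} \bmod N\}$, and the generic semisimple element $g$ has been specialized to the one-parameter flow $b_t$ (the stray ``$g \in G$'' in the statement is a harmless vestige of the general form).

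First, I would reduce to the principal congruence case. Since $\Gamma(N) \subset \Gamma$ with finite index, any smooth function on $\X = G/\Gamma$ pulls back to a smooth function on $G/\Gamma(N)$ with $C^k$ and $L^2_k$ norms changing only by constants depending on $N$. The measure $\mu_\X$ is likewise the pushforward of $\mu_{G/\Gamma(N)}$. Hence it suffices to prove the estimate on $G/\Gamma(N)$, which is itself a finite cover of $\X_d = G/SL_d(\Z)$.

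Second, I would decompose $L^2(G/\Gamma(N)) = \mathbb{C} \oplus L^2_0(G/\Gamma(N))$ and use representation theory of $G = SL_d(\R)$. Since $d \geq 3$, $G$ enjoys Kazhdan's property (T), so the unitary representation on $L^2_0$ is isolated from the trivial representation in the Fell topology. This yields a uniform spectral gap. Combined with Howe--Moore together with its effective refinements (Cowling--Haagerup--Howe, Kleinbock--Margulis), matrix coefficients of smooth vectors decay exponentially under the semisimple flow $b_t$:
\[
\left|\langle \phi_1 \circ b_t, \phi_2 \rangle_{L^2(\X)} - \mu_\X(\phi_1)\mu_\X(\phi_2)\right| \ll e^{-\gamma t} \|\phi_1\|_{L^2_k(\X)} \|\phi_2\|_{L^2_k(\X)}
\]
for some $\gamma > 0$ and some fixed Sobolev exponent $k$. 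A Sobolev embedding $\|\cdot\|_{L^2_k} \ll \|\cdot\|_{C^k}$ (at a possibly enlarged $k$) then converts this into the stated $C^k$-bound.

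The main obstacle is the effective decay of matrix coefficients for \emph{general} smooth (non-$K$-finite) vectors; this is where one pays with the Sobolev exponent $k$ and where the appeal to \cite{KM2}-type estimates is essential. Since the needed package is by now standard and has been packaged in precisely this form in \cite{BG3}, I would simply cite that reference for the final step rather than reprove the effective mixing estimate from scratch.
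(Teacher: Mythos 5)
Your proposal is correct and ultimately does exactly what the paper does: the paper does not prove this theorem but simply cites it as Theorem 1.1 of \cite{BG3} (noting in the subsequent remark that the $2$-mixing case also appears in \cite{KS}), and your argument likewise concludes by deferring to \cite{BG3}. The additional outline you give --- reduce to $\Gamma(N)$ via the finite cover, exploit Kazhdan's property (T) for $SL_d(\R)$ with $d = m+n \ge 3$ to get a uniform spectral gap on $L^2_0$, invoke effective Howe--Moore decay of matrix coefficients for smooth vectors in the Kleinbock--Margulis / Cowling--Haagerup--Howe form, and convert $L^2_k$ to $C^k$ norms via the probability measure --- is a faithful sketch of the machinery behind the cited result and is a helpful gloss on a step the paper treats as a black box.
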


\begin{rem}
    In [\cite{BG3}, Thm. 1.1], a more general result, namely quantitative multiple mixing for the action of a connected semi-simple Lie group $H$ with finite center on a homogeneous space $L/L^*$ is proved, under a suitable spectral gap hypothesis, where $L$ is a connected Lie group containing $H$ and $L^*$ is a lattice in $L$. The above statement in the case of 2-mixing has been proved much earlier and can also be found in [\cite{KS}, Cor. 3.2]. 
\end{rem}

\textit{From now on we fix $k \geq 1$   so  that  Theorem   \ref{Con Mixing} holds.}

We fix a right invariant metric $d$ on $\X \simeq G/ \Gamma$ induced from a right-invariant Riemannian metric on G. We denote by $B_{{G}}(\rho)$ the ball of radius $\rho$ centered at identity in ${G}$. As before, for a point $x \in {\X}$, we let ${i}(x)$ denote the injectivity radius at $x$. Clearly, the metric $d$ descends to give a right invariant metric on $\X_d$ as well. Similarly for $x \in \X_d$, we define $i_d(x)$ to be the supremum over $\rho > 0$ such that the map $B_{{G}}(\rho) \rightarrow B_{{G}}(\rho)x : g \mapsto gx$ is injective.  Also, for  $\epsilon > 0$ define
\begin{equation*}
    {\K}_{\epsilon,d} =\{ \Lambda \in {\X_d}: \|v\| > \epsilon, \text{   for all } v \in \Lambda \setminus \{0\} \},
\end{equation*}
and define $\K_\e = \pi^{-1}(\K_{\e,d})$. Note that $\K_{\e,d}$ is compact by Mahler's criterion. Since $\pi$ is a proper map, we have that $\K_\e$ is a compact subset of $\X$. For all $x \in \X$, it is clear that 
$$
i(x) \geq i_d(\pi(x)).
$$
So, using Proposition \ref{inj r hom} we may conclude the following Proposition.
\begin{prop}
\label{inj r con}
$i(x) \gg \epsilon^{d}$ for all $x \in \K_{\epsilon}$.
\end{prop}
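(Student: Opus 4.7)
The strategy is entirely parallel to the affine case of Proposition~\ref{inj r aff}: combine the trivial comparison of injectivity radii $i(x)\ge i_d(\pi(x))$ with the classical quantitative non-escape bound of Kleinbock--Margulis applied in dimension $d$. Since the hypothesis $x\in\K_\epsilon$ is exactly equivalent to $\pi(x)\in\K_{\epsilon,d}$, the conclusion will follow by transferring the known lower bound on $i_d$ upstairs through $\pi$.

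First, I would justify the inequality $i(x)\ge i_d(\pi(x))$ that is asserted just before the statement. The covering map $\pi:\X=G/\Gamma\to\X_d=G/\SL_d(\Z)$ is induced by the inclusion $\Gamma\hookrightarrow \SL_d(\Z)$ of a finite-index subgroup, and the right-invariant Riemannian metric on $G$ descends simultaneously to both quotients so that $\pi$ is a local isometry. Concretely, if $g_1x=g_2x$ for some $g_1,g_2\in B_G(\rho)$, then $g_1\pi(x)=g_2\pi(x)$, so injectivity of the orbit map $B_G(\rho)\to B_G(\rho)\pi(x)$ in $\X_d$ forces injectivity of $B_G(\rho)\to B_G(\rho)x$ in $\X$. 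Taking the supremum over admissible $\rho$ gives the claimed inequality.

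Next, I would invoke the analogue of Proposition~\ref{inj r hom} in dimension $d$, which is exactly the content of \cite{KM1}, Prop.~3.5, applied to $\SL_d(\R)/\SL_d(\Z)$: there is a constant depending only on $d$ such that
\[
i_d(y)\gg \epsilon^{d}\qquad\text{for every }y\in\K_{\epsilon,d}.
\]
For $x\in\K_\epsilon$ we have $\pi(x)\in\K_{\epsilon,d}$ by definition, so this bound gives $i_d(\pi(x))\gg\epsilon^d$, and combining with the previous paragraph yields $i(x)\gg\epsilon^d$.

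There is essentially no technical obstacle here; the only point requiring any care is checking that the right-invariant metric chosen on $\X$ is compatible with the one used on $\X_d$ so that $\pi$ really is a local isometry (otherwise the injectivity radii could differ by a harmless multiplicative constant, which would still be absorbed into the $\gg$ notation). Once that is in place the proof is a one-line deduction from the KM1 bound, exactly as in the affine case treated in Proposition~\ref{inj r aff}.
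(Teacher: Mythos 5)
Your proposal is correct and follows the same route as the paper: establish $i(x)\geq i_d(\pi(x))$ using that $\Gamma$ is a finite-index subgroup of $\SL_d(\Z)$ (so the stabilizer upstairs is contained in the one downstairs), observe $\K_\epsilon=\pi^{-1}(\K_{\epsilon,d})$, and invoke the dimension-$d$ instance of the Kleinbock--Margulis injectivity-radius bound. The paper does exactly this, albeit in condensed form, citing Proposition~\ref{inj r hom} as the relevant KM1 input.
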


The proof of the following Theorem is identical to the proof of Theorem 2.3 of \cite{KM2}, hence is skipped.
\begin{thm}
\label{con main lemma for mixing}
    There exists $\rho_0 > 0$ and $c,\gamma >0 $ such that for every $\rho \in (0,\rho_0),$ $f \in \C_c^{\infty}(U)$ satisfying $supp(f) \subset B_G(\rho)$, $x \in \X$ with $i(x) > 2\rho$, $\phi \in \C_c^{\infty}(\X)$, and $t \geq 0$, 

    \begin{equation}
        \int_U f(u) \phi(b_t u x ) \, du = \left( \int_U f(u) \,du \right) \left( \int_{\X} \phi \, d\mu_{\X} \right) + \mathcal{O} \left( \rho \|f\|_{L^1(U)} \|\phi \|_{Lip}  + \rho^{-c} e^{-\gamma t} \|f\|_{C^k} \|\phi\|_{\C^k} \right)
    \end{equation}
\end{thm}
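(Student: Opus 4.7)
The plan is to mirror the argument for Theorem \ref{aff main lemma for mixing}, replacing the use of Theorem \ref{Aff Mixing} with Theorem \ref{Con Mixing} (2-mixing on $\X = G/\Gamma$), and replacing the affine group $G = SL_{m+n}(\R) \ltimes \R^{m+n}$ with the linear group $G = SL_{m+n}(\R)$ acting on the congruence quotient $\X$. This is the standard Kleinbock--Margulis thickening trick: to estimate the integral of $\phi \circ b_t$ along the orbit $Ux$, we smear $f$ across the complementary directions in $G$, apply $2$-mixing to the resulting smooth function on $\X$, and control the smearing error using the contracting/neutral behaviour of the complement under conjugation by $b_t$.

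Concretely, I would first set up the local product decomposition. Let
\begin{equation*}
U^- = \left\{\begin{pmatrix} I_m & 0 \\ \alpha & I_n \end{pmatrix} : \alpha \in M_{n\times m}(\R)\right\}, \qquad U^0 = \left\{\begin{pmatrix} A & 0 \\ 0 & B \end{pmatrix} : \det(A)\det(B)=1\right\},
\end{equation*}
so that $U^-$ is the expanding horospherical for $b_{-t}$, $U^0$ is centralized by $\{b_t\}$, and the product map $U^- \times U^0 \times U \to G$ is a diffeomorphism onto a neighbourhood of the identity. Choose $\rho_0$ so that the inverse of this map is defined on $B_G(2\rho_0)$. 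The Haar measure $m_G$ factorises locally through the modular function $\Delta$ of $\widetilde{U} := U^-U^0$ via the formula in \cite{EW}, Lem.~11.31. The key contraction property
\begin{equation*}
b_t B_{\widetilde{U}}(r) b_t^{-1} \subset B_{\widetilde{U}}(r), \qquad r,t > 0,
\end{equation*}
continues to hold in the linear setting.

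Next, using Lemma \ref{lem_KM}, I would build non-negative bump functions $\theta^- \in \C_c^\infty(U^-)$ and $\theta^0 \in \C_c^\infty(U^0)$ with integral $1$, with supports multiplying into $B_{\widetilde{U}}(\rho)$, and satisfying a $C^k$ bound of the form $\rho^{-(2k+2N')}$ for an appropriate integer $N'$ (the dimension constant will be slightly different here because $U$ is $mn$-dimensional instead of $m(n+1)$-dimensional, but this only changes the exponent $c$ and not the structure of the argument). Form $\widetilde{\theta}(u^-u^0) := \theta^-(u^-)\theta^0(u^0)\Delta(u^0)^{-1}$, and define $\varphi \in \C_c^\infty(\X)$ by $\varphi(u^-u^0 ux) = \widetilde{\theta}(u^-u^0)f(u)$, which is well-defined because $i(x) > 2\rho$ ensures injectivity of the map $g \mapsto gx$ on $B_G(2\rho)$.

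With $\varphi$ in hand, the proof splits into two estimates. On the one hand, assuming $\int_\X \phi\, d\mu_\X = 0$ (after subtracting its integral), Theorem \ref{Con Mixing} applied to $\varphi$ and $\phi$ gives
\begin{equation*}
\left|\int_\X \varphi(y)\phi(b_t y)\, d\mu_\X(y)\right| \ll e^{-\gamma t} \|\varphi\|_{\C^k}\|\phi\|_{\C^k} \ll \rho^{-(2k+2N')} e^{-\gamma t} \|f\|_{\C^k}\|\phi\|_{\C^k}.
\end{equation*}
On the other hand, unfolding $\int_\X \varphi(y)\phi(b_t y)\, d\mu_\X$ through the local product decomposition and the injectivity at $x$, using $b_t(u^-u^0)b_t^{-1} \in B_{\widetilde{U}}(\rho)$, the difference between this integral and $\int_U f(u)\phi(b_t ux)\, du$ is bounded by $\rho \|\phi\|_{\text{Lip}}\|f\|_{L^1(U)}$. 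Combining gives the claimed estimate with $c = 2k + 2N'$.

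The main obstacle, and really the only place where the congruence case differs substantively from the affine case in Theorem \ref{aff main lemma for mixing}, is guaranteeing the injectivity of $g \mapsto gx$ on $B_G(2\rho)$: we use here that by Proposition \ref{inj r con}, $i(x)$ is controlled through $i_d(\pi(x))$, and $\K_\epsilon$ is the preimage of $\K_{\epsilon,d}$ under the proper covering map $\pi$. Since the bump supports are built from balls of radius $\rho$ in the three complementary subgroups and their product sits inside $B_G(2\rho)$, the standing hypothesis $i(x) > 2\rho$ is exactly what is needed to make the unfolding step valid. Everything else is routine rewriting of the KM argument in our setting, which is why the authors indicate the proof may be skipped.
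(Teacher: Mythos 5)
Your proposal is correct and follows exactly the Kleinbock--Margulis thickening argument that the paper itself invokes (the paper explicitly skips this proof as identical to Theorem 2.3 of \cite{KM2}, and the argument you outline — local product decomposition $U^-\times U^0\times U$ in the linear group, bump functions via Lemma \ref{lem_KM}, the contraction property of $\widetilde U$ under conjugation by $b_t$, unfolding plus Theorem \ref{Con Mixing} — matches it step for step, with the same dimension adjustment you note). The only small slip is framing the injectivity of $g\mapsto gx$ as a place where the proof ``differs substantively''; in fact $i(x)>2\rho$ is a standing hypothesis of the theorem, so nothing needs to be proved there, and Proposition \ref{inj r con} is invoked only later when the theorem is applied.
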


As in Section \ref{Aff Mix Property}, we now embed the flow $(a_t)$ in a multi-parameter flow as follows. For $\bar{s}= (s_1, \ldots,s_{m+n}) \in \R^{m+n},$ we set  
\begin{equation*}
    a(\bar{s}) := diag(e^{s_1}, \ldots, e^{s_{m}}, e^{-s_{m+1}}, \ldots, e^{-s_{m+n}}).
\end{equation*}
Denote by $S^+$ the cone in $\R^{m+n}$ consisting of those $\bar{s}= (s_1,\ldots, s_{m+n})$ which satisfy
\begin{equation*}
    s_1,\ldots, s_{m+n} >0 \text{   and    } \sum_{i=1}^m s_i = \sum_{i= m+1}^{m+n} s_i.
\end{equation*}
For $\bar{s}= (s_1, \ldots, s_{m+n}) \in \R^{m+n}$, we set 
\begin{equation*}
    \lfloor \bar{s} \rfloor := \min(s_1, \ldots, s_{m+n}),
\end{equation*}
and, with $\bar{s}_t:= (w_1t,\ldots, w_{m+n}t)$, we see that $a_t = a(\bar{s_t})$. We have the following quantitative non-divergence estimate for unipotent flows.

\begin{thm} 
\label{con lemma 2}
    There exists $\theta= \theta(m,n) >0$ such that for every compact $L \subset \X$ and a Euclidean ball $ B \subset U$ centered at the identity, there exists $T_0 >0$ such that for every $\epsilon \in (0,1)$, $x \in L$, and $\bar{s} \in S^+$ satisfying $\lfloor \bar{s}\rfloor \geq T_0$, one has 
\begin{equation*}
    |\{ u \in B: a(\bar{s})ux \notin \K_{\epsilon}\}| \ll \epsilon^{\theta} |B|
\end{equation*}
\end{thm}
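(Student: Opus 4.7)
The plan is to reduce the statement directly to its counterpart on $\X_d = G/\SL_d(\Z)$ via the projection $\pi: \X \to \X_d$, which was described in Section \ref{Con Intro} and is a $G$-equivariant measurable covering. Because $\Gamma$ is a finite-index subgroup of $\SL_d(\Z)$, the map $\pi$ is proper; combined with the defining property $\K_\e = \pi^{-1}(\K_{\e,d})$ this gives the key equivalence
\begin{equation*}
a(\bar s)ux \notin \K_\e \iff \pi(a(\bar s)ux) \notin \K_{\e,d}.
\end{equation*}
The crucial (and easy) observation is that, unlike the affine setting of Theorem \ref{aff lemma 2}, here the unipotent subgroup $U$ coincides with $U_d$; we do not need to split off any translation factor. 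Therefore $G$-equivariance of $\pi$ yields
\begin{equation*}
\pi(a(\bar s)u x) = a(\bar s) u\, \pi(x).
\end{equation*}

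Given a compact set $L \subset \X$, set $L' := \pi(L)$; since $\pi$ is continuous, $L'$ is a compact subset of $\X_d$. Then for any $x \in L$ and $\bar s \in S^+$,
\begin{equation*}
|\{u \in B : a(\bar s)ux \notin \K_\e\}| = |\{u \in B : a(\bar s)u\,\pi(x) \notin \K_{\e,d}\}|,
\end{equation*}
and the right-hand side is handled by the quantitative non-divergence theorem on $\X_d$, namely Theorem \ref{hom lemma 2} (i.e.\ \cite{KM1}, Cor.\ 3.4), applied to the compact set $L'$, the ball $B$, the base point $\pi(x) \in L'$, and the same $\bar s$. This furnishes the required exponent $\theta = \theta(m,n)>0$ and the threshold $T_0 = T_0(L',B)$, and yields the bound $\ll \e^\theta |B|$.

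There is essentially no obstacle beyond verifying that $\K_\e$, $\K_{\e,d}$, and the projection $\pi$ behave compatibly in the sense above — this was already set up in Section \ref{Con Intro}. In particular, because $U = U_d$ in the congruence case, no integration over an extra fiber direction is needed (contrast with the proof of Theorem \ref{aff lemma 2}, where a factor $|B''|$ had to be pulled out). So the entire argument is a direct transfer from the $\X_d$-statement through the finite covering $\pi: \X \to \X_d$.
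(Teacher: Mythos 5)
Your proposal is correct and is essentially identical to the paper's proof: both exploit $G$-equivariance of the finite covering $\pi:\X\to\X_d$ and the defining relation $\K_\e=\pi^{-1}(\K_{\e,d})$ to rewrite the bad set as $\{u\in B: a(\bar s)u\,\pi(x)\notin\K_{\e,d}\}$ and then invoke Theorem~\ref{hom lemma 2}. Your remark that $U=U_d$ (so no fiber/translation factor needs to be split off, in contrast with Theorem~\ref{aff lemma 2}) is exactly the reason the reduction here is a one-line transfer.
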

\begin{proof}
Note that 
\begin{align*}
    |\{ u \in B: a(\bar{s})ux \notin \K_{\epsilon}\}| &= |\{u \in B: \pi(a(\bar{s})ux) \notin \pi^{-1}({\K}_{\epsilon,d}) \}| \\
    &=|\{u \in B: \pi(a(\bar{s})ux) \notin {\K}_{\epsilon,d} \}| \\
    &= |\{u \in B: a(\bar{s})u\pi(x) \notin {\K}_{\epsilon,d} \}|.
\end{align*}
The theorem now  follows from the Theorem \ref{hom lemma 2}.
\end{proof}

The following theorem now follows directly from proof of corresponding theorems in \cite{BG}.  
\begin{thm}
\label{main prethm mixing con}
    There exists  $\delta > 0$ such that for every compact $\Omega \subset U$, $f \in \C_c^{\infty}(U)$ with $supp(f) \subset \Omega$, $\phi_1, \ldots, \phi_r \in \C_c^{\infty}(\X)$, $x_0 \in \X$, and $t_1, \ldots,t_r >0 $, we have 
    \begin{align*}
        \int_U f(u) \left( \prod_{i=1}^r \phi(a_{t_i} u x_0)\right) \, du &= \left(\int_U f(u) \, du  \right) \prod_{i=1}^r \left( \int_{\X} \phi(a_{t_i} u x_0) \,d\mu_{\X} \right)\\ &+ \mathcal{O}_{x_0, \Omega, r} \left(e^{-\delta D(t_1, \ldots, t_r)} \|f\|_{C^k}  \prod_{i=1}^r \| \phi_i\|_{\C^k} \right)
    \end{align*}
 where
 \begin{equation}
     D(t_1, \ldots, t_r) := \min\{t_i, |t_i - t_j|: 1 \leq i \neq j \leq r\}.
 \end{equation}
\end{thm}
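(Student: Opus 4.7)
The plan is to follow the argument for the analogous statement in \cite{BG} (their Theorem~3.1), adapting it to the congruence setting. The adaptation is straightforward because we have already established the two main technical inputs in the congruence setting: the effective equidistribution of a single $a_t U$-orbit in Theorem \ref{con main lemma for mixing}, and the quantitative nondivergence of unipotent flows in Theorem \ref{con lemma 2}. The proof is by induction on $r$, the base case $r=1$ being a direct application of Theorem \ref{con main lemma for mixing} (with $\rho$ chosen small enough that $i(x_0) > 2\rho$, using Proposition \ref{inj r con}).

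For the inductive step, assume the theorem for $r-1$ factors and reorder the times so that $t_1 \leq t_2 \leq \cdots \leq t_r$. Let $D := D(t_1,\ldots,t_r)$, and set $F(u) := f(u) \prod_{i=1}^{r-1} \phi_i(a_{t_i}ux_0)$, a compactly supported smooth function on $U$. Then
$$\int_U f(u) \prod_{i=1}^r \phi_i(a_{t_i}ux_0)\,du \;=\; \int_U F(u)\,\phi_r(a_{t_r}ux_0)\,du,$$
and applying Theorem \ref{con main lemma for mixing} with test function $\phi_r$ and flow time $t_r$ (at a scale $\rho$ to be chosen) produces the factored expression $\bigl(\int_U F\,du\bigr)\bigl(\int_\X \phi_r\,d\mu_\X\bigr)$ plus an error. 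Applying the inductive hypothesis to $\int_U F\,du$ and using that $D(t_1,\ldots,t_{r-1}) \geq D$, this leading term yields the desired main term up to an acceptable $O(e^{-\delta D})$ correction.

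The main obstacle is the error term $\rho \|F\|_{L^1}\|\phi_r\|_{\mathrm{Lip}} + \rho^{-c} e^{-\gamma t_r}\|F\|_{C^k}\|\phi_r\|_{C^k}$ coming from Theorem \ref{con main lemma for mixing}: the quantity $\|F\|_{C^k(U)}$ is not bounded by $\prod_{i<r}\|\phi_i\|_{C^k}$, because differentiating $\phi_i(a_{t_i}ux_0)$ in the $U$-directions introduces factors of $\mathrm{Ad}(a_{t_i})|_{\mathrm{Lie}(U)}$, whose operator norm grows like $e^{Ct_i}$ for a constant $C>0$ determined by the weights, giving
$$\|F\|_{C^k(U)} \ll e^{Ckt_{r-1}}\,\|f\|_{C^k}\prod_{i<r}\|\phi_i\|_{C^k}.$$
The crucial observation, exploited in \cite{BG}, is that $t_r - t_{r-1} \geq D$ by the very definition of $D$; choosing $\rho = e^{-\alpha t_r}$ for a suitable $\alpha>0$, and taking $k$ large enough in Theorem \ref{con main lemma for mixing} so that $\gamma > Ck$, one arranges that the factor $e^{-\gamma t_r}$ dominates $e^{Ckt_{r-1}}$ by a margin of at least $\gamma D$, making both error contributions $O(e^{-\delta D}\|f\|_{C^k}\prod_i\|\phi_i\|_{C^k})$ for some uniform $\delta>0$. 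The injectivity-radius condition $i(x_0)>2\rho$ is built into the $x_0$-dependence of the implied constants, while non-divergence (Theorem \ref{con lemma 2}) is invoked in the companion Corollary where the basepoint ranges over $\Y$. This closes the induction.
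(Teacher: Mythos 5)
Your inductive argument does not close, and the step you use to repair it --- ``taking $k$ large enough in Theorem~\ref{con main lemma for mixing} so that $\gamma > Ck$'' --- is not available. Both $k$ and $\gamma$ are \emph{fixed} once Theorem~\ref{Con Mixing} is quoted (the paper explicitly declares ``from now on we fix $k\geq 1$ so that Theorem~\ref{Con Mixing} holds''), so neither is a tunable parameter inside this proof; and in any case the inequality runs the wrong way, since enlarging $k$ enlarges $Ck$ and therefore makes $\gamma > Ck$ \emph{harder}, not easier, to satisfy. In the typical regime $Ck>\gamma$ (small spectral--gap constant, $k\geq 1$ and $C$ of order the largest weight), the inductive step produces an error of size
\begin{equation*}
  \rho^{-c}\,e^{-\gamma t_r}\,\|F\|_{C^k}\;\asymp\;\rho^{-c}\,e^{(Ck-\gamma)t_{r-1}}\,e^{-\gamma(t_r-t_{r-1})},
\end{equation*}
and the factor $e^{(Ck-\gamma)t_{r-1}}$ diverges as $t_{r-1}\to\infty$ even when $t_r-t_{r-1}=D$ is held fixed, so no choice of $\rho$ salvages a bound of the form $\mathcal{O}(e^{-\delta D})$. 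Two-point mixing plus induction on $r$ is genuinely insufficient here.

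The intended proof (the one the paper outsources to \cite{BG}) avoids the loss entirely by invoking a \emph{multi-point} mixing input. One thickens the orbital integral $\int_U f(u)\prod_i\phi_i(a_{t_i}ux_0)\,du$ in the transverse $U^-U^0$ directions exactly as in the proof of Theorem~\ref{con main lemma for mixing}; since each forward flow $a_{t_i}$ is non-expanding on $U^-U^0$, the thickening costs a Lipschitz error $\mathcal{O}\bigl(r\rho\prod_i\|\phi_i\|_{\mathrm{Lip}}\bigr)$ \emph{uniformly over all the $t_i$}, and one obtains a bona fide multi-correlation $\int_{\X}\varphi\prod_i(\phi_i\circ a_{t_i})\,d\mu_{\X}$. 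To this one applies the quantitative higher-order mixing estimate of \cite{BG3}, Theorem~1.1 --- of which Theorem~\ref{Con Mixing} is only the $2$-point instance, as the remark following it points out, and which the introduction flags as one of the main tools. That estimate returns the product of means plus $\mathcal{O}\bigl(e^{-\gamma D(t_1,\ldots,t_r)}\|\varphi\|_{C^k}\prod_i\|\phi_i\|_{C^k}\bigr)$ in terms of the $C^k$-norms of the \emph{original} $\phi_i$, so the ruinous $e^{Ckt_{r-1}}$ growth never appears; balancing $\rho$ against $e^{-\gamma D}$ then finishes. The $r$-point mixing bound of \cite{BG3} is the ingredient missing from your proposal.
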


\begin{cor}
\label{con aff}
    There exists  $\delta' > 0$ such that for $\phi_0 \in \C_c^{\infty}(\Y )$, $\phi_1, \ldots, \phi_r \in \C_c^{\infty}(\X)$ and $t_1, \ldots,t_r >0 $, we have 
    \begin{align*}
        \int_{\Y} \phi_0(y) \left( \prod_{i=1}^r \phi(a_{t_i} y)\right) \, d\mu_{\Y} &= \left(\int_{\Y} \phi_0 \, d\mu_{\Y}  \right) \prod_{i=1}^r \left( \int_{\X} \phi(a_{t_i} u x_0) \,d\mu_{\X} \right)\\ &+ \mathcal{O}_{x_0, \Omega, r} \left(e^{-\delta D(t_1, \ldots, t_r)} \|\phi_0\|_{C^k}  \prod_{i=1}^r \|\phi_i\|_{\C_k} \right).
    \end{align*}
\end{cor}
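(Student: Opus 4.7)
The plan is to reduce to Theorem \ref{main prethm mixing con} by means of a smooth partition of unity on $\Y$. Set $x_0 := \Z^{m+n}+\V/N \in \Y$. Since $\Y \simeq U/(U\cap \Gamma)$ is a compact quotient, the covering map $U\to\Y,\ u\mapsto u\cdot x_0$ has a positive lower bound on its injectivity radius at every point; fix $\rho>0$ strictly smaller than this bound. Cover $\Y$ by finitely many balls $B_\Y(y_j,\rho)$, $j=1,\ldots,J$, where $y_j=u_j\cdot x_0$ for chosen $u_j\in U$, and fix a smooth partition of unity $\{\psi_j\}$ subordinate to this cover. These data depend only on $\Y$, not on $\phi_0$ or the $\phi_i$.

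For each $j$, I would define $f_j\in\C_c^\infty(U)$ supported in $B_G(\rho)$ by
\[
f_j(u) := (\phi_0\,\psi_j)(u\cdot y_j),
\]
using that $u\mapsto u\cdot y_j$ is a diffeomorphism from $B_U(e,\rho)$ onto $B_\Y(y_j,\rho)$. Because the $\psi_j$ are fixed, one has $\|f_j\|_{C^k}\ll \|\phi_0\|_{C^k}$ with implied constant depending only on $\Y$. Next I would apply Theorem \ref{main prethm mixing con} to the triple $(f_j,\phi_1,\ldots,\phi_r)$ with basepoint $y_j$. The injectivity-radius hypothesis $i(y_j)>2\rho$ is satisfied by construction, and $\Y$ is compact so the implicit constants are uniform in $j$. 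This yields, for each $j$,
\[
\int_U f_j(u)\prod_{i=1}^r \phi_i(a_{t_i}uy_j)\,du
=\Bigl(\int_U f_j\Bigr)\prod_{i=1}^r\int_\X \phi_i\,d\mu_\X
+O\Bigl(e^{-\delta D(t_1,\ldots,t_r)}\|\phi_0\|_{C^k}\prod_i\|\phi_i\|_{C^k}\Bigr),
\]
where I used the $G$-invariance of $\mu_\X$ to replace $\int_\X\phi_i(a_{t_i}\cdot)\,d\mu_\X$ by $\int_\X\phi_i\,d\mu_\X$.

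The last step is to reassemble. Changing variables $y=u\cdot y_j$ back to $\Y$ converts each integral on the left into an integral against $\mu_\Y$ (up to a single normalization constant independent of $j$, coming from the proportionality between $m_U$ and $\mu_\Y$ as in \eqref{con covering preserves measure}), and similarly $\int_U f_j$ corresponds to $\int_\Y \phi_0\psi_j\, d\mu_\Y$. Summing the identities over $j$ and using $\sum_j\psi_j\equiv 1$ produces exactly the statement of the corollary on the main term side, while the $J$ error terms combine to an error of the same form (with possibly enlarged but still absolute implied constant) since $J$ and the $C^k$ norms of the $\psi_j$ are fixed by $\Y$.

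The only real obstacle is bookkeeping: one must verify that the change of variables $y=uy_j$ is an isometry onto $B_\Y(y_j,\rho)$ so that the $C^k$ norms of $f_j$ match those of $\phi_0\psi_j$ up to absolute constants, and that the Jacobian/normalization factor between $m_U$ and $\mu_\Y$ cancels uniformly across the $J$ terms. Both hold because the metric on $\Y$ is the pushforward of a right-invariant metric on $U$, and because $\mu_\Y$ restricted to any ball of radius $<\rho$ agrees, up to the fixed normalizing constant, with the Haar measure lifted from $U$.
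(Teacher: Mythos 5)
Your proof is correct, and it takes a genuinely different route from the paper's. The paper lifts $\phi_0$ to a $(U\cap\Gamma)$-periodic function $\Tilde{\phi}_0$ on $U$, multiplies by the characteristic function $\chi$ of the fundamental box $\M_{m\times n}([0,N])$, then replaces $\chi$ by a smooth approximation $\chi_\e$ satisfying $\chi\le\chi_\e\le 1$, $\|\chi-\chi_\e\|_{L^1}\ll\e$, $\|\chi_\e\|_{C^k}\ll\e^{-k}$; applying Theorem \ref{main prethm mixing con} to $f_\e=\Tilde{\phi}_0\chi_\e$ produces two competing error terms ($\e$ from the smoothing and $\e^{-k}e^{-\delta D}$ from the multiple-mixing estimate), and optimizing $\e=e^{-\delta D/(k+1)}$ gives $\delta'=\delta/(k+1)$. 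Your partition-of-unity decomposition avoids the sharp cutoff entirely: each $f_j=(\phi_0\psi_j)(\,\cdot\,y_j)$ is already smooth with $\|f_j\|_{C^k}\ll\|\phi_0\|_{C^k}$ (implied constant depending only on the fixed cover), so no optimization over an auxiliary $\e$ is needed and you get $\delta'=\delta$. Both arguments rely on the same identification of $\mu_\Y$ with a normalized pushforward of $m_U$ (equation \eqref{con covering preserves measure}) to convert the $U$-integral into a $\Y$-integral, and the finitely many base points $y_j$ cause no trouble since the implied constant in Theorem \ref{main prethm mixing con} can simply be maximized over the finite index set.

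Two small remarks. First, Theorem \ref{main prethm mixing con} as stated carries no injectivity-radius hypothesis (that hypothesis belongs to Theorem \ref{con main lemma for mixing}), so the sentence about $i(y_j)>2\rho$ is moot; in any case, to actually ensure $i(y_j)>2\rho$ you would want $\rho$ less than half the injectivity-radius lower bound, not merely less than the bound itself. Second, make sure the support of each $\psi_j$ is taken small enough that the map $u\mapsto u\cdot y_j$ is a diffeomorphism onto a neighborhood of $\operatorname{supp}(\psi_j)$ and that the $C^k$ norm comparison $\|f_j\|_{C^k}\ll\|\phi_0\psi_j\|_{C^k}\ll\|\psi_j\|_{C^k}\|\phi_0\|_{C^k}$ holds uniformly; this is harmless since the cover is fixed once and for all.
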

\begin{proof}

Let $x_0$ denote the identity coset in $\X \simeq G/ \Gamma$, which corresponds to the lattice $\Z^{m+n} + \V/N$, and recall that $$\Y= Ux_0 \simeq U/ (U \cap \Gamma).$$ Let $\Tilde{\phi}_0 \in \C^{\infty}$ denote the lift of the function $\phi_0$ to $U$, and $\chi$ the characteristic function of the subset  $$U_0 := \left\{ \begin{pmatrix}
    I_m & u \\ 0 & I_n
\end{pmatrix} : u  \in \M_{m \times n}([0,N]) \right\}.$$ Given $\epsilon > 0$, let $\chi_{\epsilon} \in \Cc(U)$ be a smooth approximation of $\chi$ with uniformly bounded support which satisfies $$\chi \leq \chi_{\epsilon} \leq 1, \ \|\chi - \chi_{\epsilon}\|_{L^1(U)} \ll \epsilon, \ \|\chi_{\epsilon}\|_{c^k} \ll \epsilon^k.$$
We observe that if $f_{\epsilon}:= \Tilde{\phi}_0 \chi_{\epsilon}$ and $f_0:= \Tilde{\phi}_0 \chi$, then $$\|f_0 - f_{\epsilon}\|_{L^1(U)} \ll \epsilon \|\phi_0\|_{C^0}$$ and $$\|f_{\epsilon}\|_{C^k} \ll \|\Tilde{\phi}_0\|_{C^k} \|\chi_{\e}\|_{C^k}, $$ which implies using equation \eqref{con covering preserves measure} that
\begin{align*}
     \int_{\Y} \phi_0(y) \left( \prod_{i=1}^r \phi(a_{t_i} y)\right) \, d\mu_{\Y} &=  \frac{1}{N^{nm}} \int_{U} f_0(u) \left( \prod_{i=1}^r \phi(a_{t_i} u x_0)\right) \, du \\
     &= \frac{1}{N^{nm}} \int_{U} f_{\e}(u) \left( \prod_{i=1}^r \phi(a_{t_i} u x_0)\right) \, du \\
     &\  + \Oo \left( \e \prod_{i=0}^{r} \|\phi_i\|_{C^0}\right), 
\end{align*}
 and $$\int_{\Y} \phi_0 \, d\mu_{\Y}= \frac{1}{N^{nm}} \int_{U} f_0(u)\, du = \frac{1}{N^{nm}} \int_{U} f_{\e}(u)\, du + \Oo(\e \|\phi_0\|_{C^0}).$$ Therefore, Theorem \ref{main prethm mixing con} implies that 
 \begin{align*}
    \int_{\Y} \phi_0(y) \left( \prod_{i=1}^r \phi(a_{t_i} y)\right) \, d\mu_{\Y} &=  \frac{1}{N^{nm}}\left(\int_U f_{\e}(u) \, du  \right) \prod_{i=1}^r \left( \int_{\X} \phi \,d\mu_{\X} \right)  \\ 
    &+ \mathcal{O}_{r} \left(\e \prod_{i=1}^r \|\phi_i\|_{C^0} + e^{-\delta D(t_1, \ldots, t_r)} \|f_{\e}\|_{C^k}  \prod_{i=1}^r \| \phi_i\|_{\C^k} \right) \\
    &= \left(\int_{\Y} \phi_0 \, d\mu_{\Y}  \right) \prod_{i=1}^r \left( \int_{\X} \phi\,d\mu_{\X} \right) \\ 
    &+ \mathcal{O}_{r} \left( \left(\e  + \e^{-k} e^{-\delta D(t_1, \ldots, t_r)} \right) \|\phi_0\|_{C^k}  \prod_{i=1}^r \| \phi_i\|_{\C^k} \right).
 \end{align*}
 The corollary (with $\delta' = \delta/(k+1)$) follows by choosing $\e= e^{-\delta D(t_1, \ldots, t_r)/(k+1)}.$ 
\end{proof}

Finally, we have the following

\begin{thm}
    Let $a= diag(a_1, \ldots, a_{m+n})$ where $a_1,\ldots, a_m >1$, $0< a_{m+1}, \ldots , a_{m+n}<1 $, and $a_1 \ldots a_{m+n}=1$. Then $a$ defines a continuous self-map of the space $\X$, which preserves $\mu_{\X}$. Then for $\phi \in \C_c^{\infty}(\X)$ and $\xi \in \R$, we have
    \begin{equation*}
        \mu_{\Y}(\{y \in \Y: \frac{1}{\sqrt{N}} \sum_{s=0}^{N-1}(\phi \circ a^s - \mu_{\Y}(\phi \circ a^s) < \xi\}) \rightarrow Norm_{\sigma_{\phi}(\xi)}
    \end{equation*}
    as $N \rightarrow \infty$, where
    \begin{equation*}
    \sigma_{\phi}^2 := \sum_{s= -\infty}^{\infty} \left( \int_{\X} (\phi \circ a^s)\phi d\mu_{\X} - \mu_{\X}(\phi)^2\right)
    \end{equation*}
\end{thm}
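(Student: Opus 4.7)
The plan is to adapt the cumulant method of Bj\"orklund--Gorodnik (carried out for the affine setting in Section \ref{Affine} of this paper) to the congruence setting, using Corollary \ref{con aff} as the essential multiple-mixing input. Set
\begin{equation*}
\Phi_N(y) := \frac{1}{\sqrt{N}}\sum_{s=0}^{N-1}\bigl(\phi\circ a^s - \mu_\Y(\phi\circ a^s)\bigr)(y),\qquad y\in\Y.
\end{equation*}
By the classical Fr\'echet--Shohat theorem it suffices to show that (i) the second moment $\int_\Y \Phi_N^2\,d\mu_\Y$ converges to $\sigma_\phi^2$ and (ii) for every $r\ge 3$ the joint cumulant $\mathrm{Cum}_r^{\mu_\Y}(\Phi_N,\ldots,\Phi_N)$ tends to $0$ as $N\to\infty$. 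Since $\phi\in\C_c^\infty(\X)$ is bounded and smooth, all required moments exist.

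The key step is the cumulant estimate. Expanding multilinearly one has
\begin{equation*}
\mathrm{Cum}_r^{\mu_\Y}(\Phi_N,\ldots,\Phi_N) = N^{-r/2}\sum_{0\le s_1,\ldots,s_r<N} \mathrm{Cum}_r^{\mu_\Y}\bigl(\phi\circ a^{s_1},\ldots,\phi\circ a^{s_r}\bigr).
\end{equation*}
Writing the joint cumulant as the standard alternating sum over set partitions of $\{1,\ldots,r\}$, each partition contributes a product $\prod_{B} \int_\Y \prod_{i\in B}(\phi\circ a^{s_i} - \mu_\Y(\phi\circ a^{s_i}))\,d\mu_\Y$. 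Applying Corollary \ref{con aff} (with $\phi_0\equiv 1$ or, after a standard reduction, a smooth cutoff on $\Y$) to each such block shows that each of these expectations is $\mathcal O(e^{-\delta' D(s_1,\ldots,s_r)}\,\|\phi\|_{\C^k}^{|B|})$ up to the disconnected main terms, and the exact inclusion--exclusion identity satisfied by cumulants forces all ``disconnected'' contributions to cancel. Consequently
\begin{equation*}
\bigl|\mathrm{Cum}_r^{\mu_\Y}(\phi\circ a^{s_1},\ldots,\phi\circ a^{s_r})\bigr| \ll_{r,\phi} e^{-\delta' D(s_1,\ldots,s_r)}.
\end{equation*}
The combinatorial lemma of Bj\"orklund--Gorodnik (the argument in the proof of [Thm.~5.1, \cite{BG}]) bounds $\sum_{s_1,\ldots,s_r} e^{-\delta' D(s_1,\ldots,s_r)}$ by $\mathcal O_r(N^{\lceil r/2\rceil})$, so for $r\ge 3$ the normalised sum is $\mathcal O(N^{\lceil r/2\rceil - r/2})=o(1)$, which is the required vanishing of higher cumulants.

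For the variance, a direct expansion gives
\begin{equation*}
\int_\Y \Phi_N^2\,d\mu_\Y = \frac{1}{N}\sum_{s_1,s_2=0}^{N-1}\left(\int_\Y (\phi\circ a^{s_1})(\phi\circ a^{s_2})\,d\mu_\Y - \mu_\Y(\phi\circ a^{s_1})\mu_\Y(\phi\circ a^{s_2})\right).
\end{equation*}
Applying Corollary \ref{con aff} with $r=2$ replaces each inner product by $\int_\X(\phi\circ a^{s_1-s_2})\phi\,d\mu_\X - \mu_\X(\phi)^2$ up to an $\mathcal O(e^{-\delta' \min(s_1,s_2,|s_1-s_2|)})$ error; by Theorem \ref{Con Mixing} (applied to the commuting diagonal conjugation) these correlations decay exponentially in $|s_1-s_2|$. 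A Ces\`aro/tail argument then yields $\int_\Y\Phi_N^2\,d\mu_\Y\to \sigma_\phi^2$, and the same exponential decay guarantees that $\sigma_\phi^2$ is a finite, absolutely convergent series. Combining (i) and (ii), $\Phi_N$ converges in distribution under $\mu_\Y$ to $\mathrm{Norm}_{\sigma_\phi}$.

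The main obstacle is the combinatorial bookkeeping in the cumulant argument: one must verify that the inclusion--exclusion cancellations in the definition of $\mathrm{Cum}_r$ are compatible with Corollary \ref{con aff}, which only supplies error terms that are exponentially small in the gap function $D$ rather than factoring uniformly across partitions. This is precisely the content of the cumulant-method framework in \cite{BG}, and since Corollary \ref{con aff} has the same shape as its counterpart in \cite{BG}, the argument there transfers verbatim. The remaining verifications (truncating by $\K_\e$ using Proposition \ref{inj r con} and Theorem \ref{con lemma 2} to control the $\phi\circ a^s$ away from the cusp, then smoothing via Lemma \ref{lem_KM}) are routine once the analogous facts of Section \ref{Affine} are in hand.
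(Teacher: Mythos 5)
The overall plan---apply Bj\"orklund--Gorodnik's cumulant method, with Corollary \ref{con aff} playing the role of the multiple-mixing input, and reduce to (i) convergence of the second moment to $\sigma_\phi^2$ and (ii) vanishing of cumulants of order $r\ge 3$---is exactly what the paper does: it simply states that the proof ``is the same as the proof of Theorem 3.1 of \cite{BG}.'' So as a road map the proposal is correct and matches the paper.

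However, the quantitative summary of the cumulant step contains a genuine slip. You claim that the block-wise application of Corollary \ref{con aff} gives $\bigl|\mathrm{Cum}_r^{\mu_\Y}(\phi\circ a^{s_1},\ldots,\phi\circ a^{s_r})\bigr|\ll e^{-\delta' D(s_1,\ldots,s_r)}$ and then that $\sum_{0\le s_1,\ldots,s_r<N}e^{-\delta' D(s_1,\ldots,s_r)}=\mathcal O_r(N^{\lceil r/2\rceil})$, so the normalised cumulant is $\mathcal O(N^{\lceil r/2\rceil-r/2})=o(1)$. Neither half of this is right. First, for $r=3$ the exponent is $\lceil 3/2\rceil -3/2 = 1/2>0$ and for $r=4$ it is $0$, so even if the sum bound held you would not get $o(1)$. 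Second, the raw sum $\sum_{\bar s}e^{-\delta' D(\bar s)}$ is in fact of order $N^{r-1}$, not $N^{\lceil r/2\rceil}$: since $e^{-\delta' D(\bar s)}=\max\bigl(e^{-\delta' s_i},\,e^{-\delta'|s_i-s_j|}\bigr)$, bounding the max by a sum gives a contribution $\mathcal O(N^{r-1})$ from each of the $\mathcal O(r^2)$ terms. Dividing by $N^{r/2}$ then gives $N^{r/2-1}$, which grows for $r\ge 3$. The point is that the naive bound $|\mathrm{Cum}_r|\ll e^{-\delta' D}$, although true, is far too weak: $D$ is a \emph{minimum} over all coordinates and pairwise gaps, so it fails to capture the fact that the cumulant is already exponentially small as soon as the ordered tuple $s_{(1)}\le\cdots\le s_{(r)}$ has \emph{one} large gap (including the initial gap from $0$), because cumulants of block-independent families vanish. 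The actual argument in \cite{BG} decomposes $[0,N)^r$ according to the largest gap in the ordered configuration: tuples whose gaps all lie below a threshold $M$ number only $\mathcal O_r(N M^{r-1})$, and the remaining tuples contribute $\mathcal O(e^{-\delta M})$ each via the block-cancellation structure of cumulants. Optimising $M\sim\log N$ gives $\mathrm{Cum}_r(\Phi_N)=\mathcal O_r\bigl(N^{1-r/2}(\log N)^{r-1}\bigr)\to 0$ for $r\ge 3$. So the proposal as written would not compile into a proof; you need the sharper, gap-based cumulant estimate from \cite{BG} rather than the $e^{-\delta' D}$ bound plus a (false) $N^{\lceil r/2\rceil}$ count. (One further small inaccuracy: the set-partition expansion of $\mathrm{Cum}_r$ is over products of \emph{uncentered} joint moments $\prod_B\int_\Y\prod_{i\in B}\phi\circ a^{s_i}\,d\mu_\Y$, with M\"obius-type coefficients, not over centered blocks. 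Also, the truncation step with $\K_\e$ is irrelevant here since $\phi\in\C_c^\infty(\X)$ is already compactly supported; it only becomes necessary for the later Siegel-transform theorems.)
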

The proof of above theorem is the same as the proof of Theorem 3.1 of \cite{BG}, hence we omit it.

\subsection{Siegel Transforms} \label{Con Siegel} \vspace{0.3in} \hfill \\
\subsubsection{Properties of Siegel Transforms} \hfill \\

Given $f: \R^{m+n} \rightarrow \R$, we define its \textit{Siegel transform} $\hat{f}: \X \rightarrow \R$ by $$ \hat{f}(\Lambda):= \sum_{z \in \Lambda \setminus \{0\}} f(z), \text{   for } \Lambda \in \X.$$

Let us start with the definition of function $\alpha : \X  \rightarrow \R$ as 
 \begin{align}
  \alpha(x)= \alpha_{d}(\pi(x)),
\end{align}
where $\alpha_s$ is defined in \eqref{def alpha_d}. Now, we have the following results:

\begin{prop} 
\label{con alpha 1}
    If $f: \R^{m+n} \rightarrow \R$ is a bounded function with compact support, then 
    \begin{equation*}
        |\hat{f}| \ll_{supp(f)} \|f\|_{C^0} \alpha(\Lambda) \text{    for all } \Lambda \in \X
    \end{equation*}
\end{prop}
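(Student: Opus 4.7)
The plan is to reduce the statement to the classical bound on $\X_d$ recorded in Proposition \ref{alpha 1}, exploiting the fact that, unlike the truly affine situation of Proposition \ref{aff alpha 1 }, the shift $v/N$ is rational, so every point of a lattice $\Lambda \in \X$ lies in the rescaled lattice $(1/N)\pi(\Lambda)$. As usual one may assume $f \geq 0$ by replacing $f$ with $|f|$.

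The key observation is the following: write $\Lambda = A(\Z^{m+n}+ v/N)$ with $A \in G$, so that $\pi(\Lambda) = A\Z^{m+n}=: \Lambda_0 \in \X_d$ is the underlying unimodular lattice. Every $z \in \Lambda$ has the form $z = A(q+v/N) = \tfrac{1}{N}A(Nq+v)$ for some $q \in \Z^{m+n}$, and since $Nq+v \in \Z^{m+n}$ we obtain the inclusion
\begin{equation*}
    \Lambda \;\subset\; \tfrac{1}{N}\Lambda_0.
\end{equation*}
Consequently, for $f \geq 0$,
\begin{equation*}
    \hat f(\Lambda) \;=\; \sum_{z \in \Lambda \setminus \{0\}} f(z) \;\leq\; \sum_{z \in \frac{1}{N}\Lambda_0 \setminus \{0\}} f(z).
\end{equation*}

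Next, I would rescale to land in the unimodular setting. Define $g : \R^{m+n} \to \R$ by $g(y) := f(y/N)$; then $g$ is bounded with compact support contained in $N \cdot \mathrm{supp}(f)$, and $\|g\|_{C^0} = \|f\|_{C^0}$. Reparametrising $z = y/N$ in the sum above gives
\begin{equation*}
    \sum_{z \in \frac{1}{N}\Lambda_0 \setminus \{0\}} f(z) \;=\; \sum_{y \in \Lambda_0 \setminus \{0\}} g(y) \;=\; \hat g(\Lambda_0).
\end{equation*}

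Finally, apply the classical Siegel estimate (Proposition \ref{alpha 1}) to the function $g$ on $\X_d$ to conclude
\begin{equation*}
    \hat g(\Lambda_0) \;\ll_{\mathrm{supp}(g)}\; \|g\|_{C^0}\, \alpha_d(\Lambda_0) \;=\; \|f\|_{C^0}\, \alpha(\Lambda),
\end{equation*}
using the definition $\alpha(\Lambda) = \alpha_d(\pi(\Lambda))$. Since $N$ is fixed throughout this section, $\mathrm{supp}(g) = N \cdot \mathrm{supp}(f)$ is controlled by $\mathrm{supp}(f)$ alone, and the implied constant depends only on $\mathrm{supp}(f)$ (and the fixed $N$), as required. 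There is no serious obstacle here; the proof is essentially a two-line rescaling argument, which is what one should expect since the congruence setting is ``morally'' a rational shift and so much milder than the genuinely inhomogeneous case treated in Proposition \ref{aff alpha 1 }.
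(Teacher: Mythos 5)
Your proof is correct, but it takes a genuinely different route from the paper's. The paper disposes of this in one line by observing that the congruence space $\X$ here sits inside the space of all affine unimodular lattices treated in Section~\ref{Aff Intro}, so Proposition~\ref{con alpha 1} is literally a special case of Proposition~\ref{aff alpha 1 } (whose proof in turn goes through the embedding $h$ into $\X_{d+1}$ and applies Schmidt's estimate in dimension $d+1$). You instead exploit the rationality of the shift directly: the inclusion $\Lambda = A(\Z^{m+n}+v/N) \subset \tfrac{1}{N}A\Z^{m+n} = \tfrac{1}{N}\pi(\Lambda)$ lets you rescale by $N$ and apply Schmidt's estimate to $g(y)=f(y/N)$ on the genuinely unimodular lattice $\pi(\Lambda) \in \X_d$, with no detour through dimension $d+1$. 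This is more elementary and makes transparent exactly why the congruence case is milder than the truly inhomogeneous one — the shift disappears after a bounded dilation, which fails for irrational shifts. What the paper's route buys is uniformity: citing the already-proven affine proposition avoids re-deriving anything and keeps the two sections structurally parallel. One small precision issue in your write-up: Proposition~\ref{alpha 1} as quoted in the paper is stated on $\X_{d+1}$, whereas you need its $\X_d$ analogue; the result of \cite{SW} holds in every dimension, so this is harmless, but you should either cite \cite{SW} directly in dimension $d$ or note that the dimension is immaterial.
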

The Proposition \ref{con alpha 1} follows directly from Proposition \ref{aff alpha 1 }.

\begin{prop}
\label{con alpha 2}
    $\alpha \in L^p(\X)$ for $1 \leq p < m+n$. In particular, $\mu_{\X}(\{\alpha \geq L\}) \ll_p L^{-p}$ for all $p < m+n$. 
\end{prop}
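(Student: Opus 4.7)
The plan is to reduce the statement directly to the corresponding classical estimate for $\alpha_d$ on $\X_d$, which is exactly Proposition \ref{alpha 2}. By the definition given just before the proposition, we have the identity $\alpha = \alpha_d \circ \pi$, where $\pi : \X \to \X_d$ is the finite-index covering map introduced in Section \ref{Con Intro}. The essential point is that $\pi$ was shown there to send $\mu_\X$ to $\mu_{\X_d}$, i.e.\ $\pi : (\X, \mu_\X) \to (\X_d, \mu_{\X_d})$ is measure preserving.

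Given this, the change-of-variables formula yields, for every $p \geq 1$,
\begin{equation*}
\int_\X \alpha^p \, d\mu_\X \;=\; \int_\X (\alpha_d \circ \pi)^p \, d\mu_\X \;=\; \int_{\X_d} \alpha_d^p \, d\mu_{\X_d},
\end{equation*}
so $\alpha \in L^p(\X)$ if and only if $\alpha_d \in L^p(\X_d)$. By Proposition \ref{alpha 2}, the latter holds for all $1 \leq p < m+n$, proving the first assertion. For the tail bound, the same measure-preserving property gives
\begin{equation*}
\mu_\X(\{\alpha \geq L\}) \;=\; \mu_\X(\pi^{-1}(\{\alpha_d \geq L\})) \;=\; \mu_{\X_d}(\{\alpha_d \geq L\}) \;\ll_p\; L^{-p},
\end{equation*}
again by Proposition \ref{alpha 2}. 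This is in fact exactly the argument used for Proposition \ref{aff alpha 2} in the inhomogeneous setting, and it transfers verbatim because the only inputs are the identity $\alpha = \alpha_d \circ \pi$ and the fact that $\pi$ is measure preserving; neither requires anything special about the congruence setting beyond what is already established. There is no real obstacle: the work was already done when setting up $\pi$ and $\mu_\X$ in Section \ref{Con Intro}.
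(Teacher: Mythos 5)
Your proof is correct and follows essentially the same argument as the paper: reduce to the classical estimate for $\alpha_d$ on $\X_d$ via the identity $\alpha = \alpha_d \circ \pi$ and the measure-preserving property of $\pi$. The only cosmetic difference is that the paper obtains the tail bound from the $L^p$ bound (presumably via Markov's inequality), whereas you derive it directly from the measure-preserving property; both are valid and equivalent.
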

\begin{proof}
    Since, the map $\pi: \X \rightarrow \X_d$ is measure preserving and $\alpha = \alpha_{d} \circ \pi$, we get that $\alpha \in L^p(\X)$ if and only if $\alpha_{d} \in L^p(\X_d)$. But, the latter follows from Proposition \ref{alpha 2}. The second part follows from the first one. 
\end{proof}

We now recall an analogue of Rogers's formula for the second moment of the Siegel transform in the congruence setting. This was established by the second named author, Kelmer and Yu, in \cite{GKY}, and has subsequently already found use in proving central limit theorems in lattice point counting \cite{AGH}.

\begin{prop} [\cite{GKY}  Thm 3.2]
\label{con S R}
Let $m>1$. Let $f: \R^{m+n} \rightarrow \R$ is a bounded, compactly supported function, then 

(a)\begin{equation}
\label{Se2}
    \int_{\X} \hat{f}(\Lambda) \, d\mu_{\X}(\Lambda) = \int_{\R^{m+n}}f
\end{equation}

(b) \begin{equation}
 \label{Rog2}
    \int_{\X} |\hat{f}(\Lambda)|^2 \, d\mu_{\X}(\Lambda)  =  \left(\int_{\R^{m+n}}f \right)^2 + \frac{1}{\zeta_N(m+n)} \sum_{\substack{k_1 \geq 1 \\ gcd(k_1,q)=1 }} \sum_{\substack{k_2 \in \Z \setminus \{0\} \\ k_2 = k_1 (mod \ {q})}} \int_{\R^{m+n}} f(k_1 x) f(k_2 x) \, dx.
\end{equation}
\end{prop}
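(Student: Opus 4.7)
The plan is to adapt the classical Siegel--Rogers unfolding procedure to the congruence setting, following \cite{GKY}. Using the identification $\X\simeq G/\Gamma$ from Section \ref{Con Intro}, with $G=\SL_{m+n}(\R)$ and $\Gamma=\{A\in\SL_{m+n}(\Z):A\V\equiv\V\pmod N\}$, every $\Lambda\in\X$ has the form $\Lambda=A(\Z^{m+n}+\V/N)$. After the substitution $y=Nz+\V$ the Siegel transform unfolds as
\begin{equation*}
\hat f(\Lambda)=\sum_{\substack{y\in\Z^{m+n},\, y\neq 0\\ y\equiv \V\,(\mathrm{mod}\,N)}} f(Ay/N),
\end{equation*}
so both parts of the proposition reduce to analysing the $\Gamma$-orbit structure of the admissible set of $y$'s (for (a)) or pairs (for (b)).

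For part (a), I decompose each admissible $y$ uniquely as $y=ky_p$ with $k\ge 1$ and $y_p$ primitive. Since $\gcd(\V,N)=1$, any prime dividing $\gcd(k,N)$ would divide every coordinate of $\V\bmod N$, forcing $\gcd(k,N)=1$ and $y_p\equiv k^{-1}\V\pmod N$. Because $\Gamma(N)\subseteq\Gamma$, strong approximation for $\SL_{m+n}$ yields that $\Gamma$ acts transitively on primitive integer vectors of any fixed primitive residue class mod $N$; hence the $\Gamma$-orbits are parametrised exactly by $k\ge 1$ coprime to $N$. Unfolding each orbit through the stabiliser of a representative, passing from $G/\mathrm{Stab}$ to $\R^{m+n}\setminus\{0\}$ via the transitive $G$-action on non-zero vectors, and making the change of variables $x=kAy_p/N$, converts the orbital integrals into $\int_{\R^{m+n}} f$; the weight $\zeta_N(m+n)$ arising from the sum over $k$ coprime to $N$ cancels against the normalising constants attached to $\mu_\X$ and the chosen fundamental domain.

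For part (b), I expand $|\hat f(\Lambda)|^2$ as a double sum over admissible pairs $(y,y')$ and split into $\Gamma$-orbits on the product. Pairs linearly independent over $\Q$ form a single orbit-family once residues are fixed; an analogous strong-approximation argument for pairs of linearly independent primitive vectors lets these pairs unfold into the main term $\bigl(\int_{\R^{m+n}} f\bigr)^2$, exactly as in Rogers's classical derivation. The linearly dependent pairs have the form $(k_1y_p,k_2y_p)$ with $y_p$ primitive, $k_1\ge 1$, $k_2\in\Z\setminus\{0\}$; the constraints $k_1y_p\equiv\V\pmod N$ and $k_2y_p\equiv\V\pmod N$ together with $\gcd(\V,N)=1$ force $\gcd(k_1,N)=1$ and $k_2\equiv k_1\pmod N$, matching the indexing in the statement. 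Each such orbit contributes $\int_{\R^{m+n}} f(k_1x)f(k_2x)\,dx$, while the density of primitive integer vectors in one fixed primitive residue class mod $N$ supplies exactly the prefactor $1/\zeta_N(m+n)$, yielding the claimed formula.

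The main obstacle is the orbit-and-stabiliser bookkeeping: one must verify the transitivity of $\Gamma$ on pairs of linearly independent primitive vectors with prescribed residues mod $N$ (relying crucially on strong approximation) and carefully reconcile three competing normalising factors — the index $[\SL_{m+n}(\Z):\Gamma]$ entering $\mu_\X$, the number of admissible primitive residue classes, and the $k$-summation — so that they combine to produce exactly the prefactor $1/\zeta_N(m+n)$. The hypothesis $m\ge 2$ (so that $m+n\ge 3$) is needed at the final step to ensure absolute convergence of the double sum over $(k_1,k_2)$ for $f$ of compact support, in analogy with the classical hypothesis for Rogers's second moment formula.
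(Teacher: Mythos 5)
The paper does not prove Proposition \ref{con S R}; it cites it directly from \cite{GKY}, Thm.\ 3.2, so there is no in-paper proof to compare against. Your blind reconstruction follows the natural adaptation of the Siegel--Rogers unfolding argument to the congruence setting, which is in the spirit of \cite{GKY}. Most of the key steps are handled correctly: the unfolding $\hat f(\Lambda)=\sum_{y\equiv\V\,(\mathrm{mod}\,N)}f(Ay/N)$, the factorisation $y=ky_p$ with $\gcd(k,N)=1$ forced by $\gcd(\V,N)=1$, the strong-approximation transitivity of $\Gamma\supset\Gamma(N)$ on primitive vectors in a fixed primitive residue class mod $N$, and the derivation of the constraints $\gcd(k_1,N)=1$, $k_2\equiv k_1\pmod N$ for linearly dependent pairs (which, incidentally, confirms that the symbol $q$ in the displayed statement is a typo for $N$).

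Two steps need more care. First, the claim that "pairs linearly independent over $\Q$ form a single orbit-family once residues are fixed" is not correct as stated: $\SL_{m+n}(\Z)$, and a fortiori $\Gamma$, does \emph{not} act transitively on linearly independent integer pairs — there are infinitely many orbits, indexed by the Hermite normal form (equivalently, by the index of $\Z v_1+\Z v_2$ in its rational saturation). That all these orbits combine, after the congruence bookkeeping, to produce exactly $\bigl(\int_{\R^{m+n}}f\bigr)^2$ is the content of the Rogers/Siegel mean value formula for pairs and cannot be reduced to a transitivity assertion; you must either invoke that theorem in a form that survives passage to the finite-index subgroup $\Gamma$ or carry out the index summation explicitly. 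Second, the inference "$(k_1-k_2)y_p\equiv 0\pmod N$ and $\gcd(y_p,N)=1$ forces $k_1\equiv k_2\pmod N$" requires a per-prime argument: $y_p$ primitive does not guarantee a single coordinate coprime to $N$, but for each prime $p\mid N$ the condition $k_1y_p\equiv\V$ together with $p\nmid\gcd(\V)$ produces a coordinate $i(p)$ with $p\nmid y_{p,i(p)}$, and working prime by prime yields $N\mid k_1-k_2$. With these two points repaired, the outline is a correct proof strategy.
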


\subsubsection{Non-divergence estimates of Siegel Transforms} \hfill \\

We retain the notation from Section \ref{Aff Intro}. Given 
$$
0<w_1,\ldots, w_m<n \text{ and }  w_1+\ldots+w_m=n,
$$
we denote by $a$ the self-map on $\X$ induced by
\begin{equation}
a={diag}(e^{w_1},\ldots,e^{w_m},e^{-1},\ldots,e^{-1}).
\end{equation}
Our goal in this subsection is to analyze the escape of mass for the submanifolds $a^s\Y$ and 
bound the Siegel transforms $\hat f(a^s y)$ for $y\in \Y$. We have the following propositions.

\begin{prop}
\label{Siegel COn 1}
    There exists $\kappa>0$ such that for every $L \geq 1$ and $s \geq \kappa \log L$,
    \begin{equation*}
        \mu_{\Y}(\{y \in \Y : \alpha(a^sy) \geq L\}) \ll_p L^{-p} \text{    for all } p< m+n.
    \end{equation*}
\end{prop}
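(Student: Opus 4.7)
The plan is to mimic the strategy used for the inhomogeneous version (Proposition \ref{Siegel Aff 1}): push the whole problem down to the non-congruence homogeneous space $\X_d$ via the projection $\pi:\X\to\X_d$, and then quote the known estimate of \cite{BG} (Proposition \ref{Siegel Hom 1}) verbatim.

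More precisely, first I would use the definition $\alpha=\alpha_d\circ \pi$ together with the fact that the diagonal action commutes with $\pi$, i.e.\ $\pi(a^sy)=a^s\pi(y)$, to obtain
\[
\{y\in\Y:\alpha(a^sy)\geq L\}=\pi_{|_\Y}^{-1}\bigl(\{z\in\Y_d:\alpha_d(a^sz)\geq L\}\bigr).
\]
Since the restriction $\pi_{|_\Y}:(\Y,\mu_\Y)\to(\Y_d,\mu_{\Y_d})$ is measure preserving (this was already recorded in Section \ref{Con Intro}), we conclude
\[
\mu_\Y(\{y\in\Y:\alpha(a^sy)\geq L\})=\mu_{\Y_d}(\{z\in\Y_d:\alpha_d(a^sz)\geq L\}).
\]

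Then I would invoke Proposition \ref{Siegel Hom 1} (the statement quoted from \cite{BG}) directly, which gives the desired bound $\ll_p L^{-p}$ for every $p<m+n$ as soon as $s\geq \kappa\log L$ for a suitable $\kappa>0$.

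I expect no real obstacle: the reduction to $\Y_d$ is entirely formal and identical to the reduction carried out in the proof of Proposition \ref{Siegel Aff 1}. The only thing worth double-checking is that the two measure-preservation statements $\pi_*\mu_\X=\mu_{\X_d}$ and $\pi_*\mu_\Y=\mu_{\Y_d}$, which are both established in Section \ref{Con Intro}, suffice — and they do, because the event we are measuring is $\pi$-saturated. In particular no new non-divergence estimate has to be proved in the congruence setting; the congruence structure is absorbed entirely by passing through the finite-index cover $\pi$.
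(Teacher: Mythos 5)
Your proposal is correct and follows essentially the same route as the paper's own proof: both reduce the problem to $\Y_d$ via $\alpha=\alpha_d\circ\pi$, the equivariance $\pi(a^sy)=a^s\pi(y)$, and the fact that $\pi_{|_\Y}:(\Y,\mu_\Y)\to(\Y_d,\mu_{\Y_d})$ is measure preserving, then cite Proposition \ref{Siegel Hom 1}. No substantive difference.
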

\begin{proof}
    It is clear that
    \begin{align*}
       \mu_{\Y}(\{y \in \Y : \alpha(a^sy) \geq L\}) &= \mu_{\Y}(\{y \in \Y : \alpha_d(\pi(a^sy)) \geq L\}) \\
        &= \mu_{\Y}(\{y \in \Y : \alpha_{d}(a^s \pi(y)) \geq L\}) \\
        &=\mu_{\Y}(\pi^{-1}(\{z \in \Y_d : \alpha_{d}(a^s z) \geq L\})) \\
        &= \mu_{\Y_d}(\{z \in \Y_d : \alpha_{d}(a^s z) \geq L\}). 
    \end{align*}
   Now the proof follows from Proposition \ref{Siegel Hom 1}.
\end{proof}

\begin{prop} 
\label{siegel main con}
    Let $f$ be a bounded measurable function on $\R^{m+n}$ with compact support contained in the open set $\{(x_{m+1}, \ldots, x_{m+n}) \neq 0\}$. Then for $i=1,2$, we have
    \begin{equation}
       \underset{s \geq 0}{\sup} \| \hat{f} \circ a^s\|_{L^i(\Y)} < \infty.
    \end{equation}
\end{prop}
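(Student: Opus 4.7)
The plan is to mirror the proof of Proposition \ref{siegel main aff}, using the measure-preserving identification in \eqref{con covering preserves measure} of $(\M_{m\times n}([0,N)),\,N^{-mn}dm_U)$ with $(\Y,\mu_\Y)$ in place of the unit-cube parametrisation used there. After reducing to the case where $f$ is the characteristic function of a set of the form $B=\{(\bar x,\bar y)\in\R^{m+n}:\upsilon_1\leq\|\bar y\|\leq\upsilon_2,\ |x_i|\leq\vartheta\|\bar y\|^{-w_i}\}$, the explicit description \eqref{defLambda3} of $\Lambda_u$ together with the substitution $p_i'=Np_i+v_i$, $\bar q'=N\bar q+v''$ rewrites the Siegel transform as
\begin{equation*}
\hat f(a^s\Lambda_u)=\sum_{\substack{\bar q'\equiv v''\ (N)\\ \upsilon_1Ne^s\leq\|\bar q'\|\leq\upsilon_2Ne^s}}\ \prod_{i=1}^m\ \sum_{p_i'\equiv v_i\ (N)}\chi'_{\vartheta N^{1+w_i}\|\bar q'\|^{-w_i}}\bigl(p_i'+\langle\bar u_i,\bar q'\rangle\bigr),
\end{equation*}
where $\chi'_M$ denotes the indicator of $[-M,M]$, $\bar u_i$ is the $i$-th row of $u$, and $\equiv\ (N)$ abbreviates congruence modulo $N$.

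For the $L^1$ bound I would interchange sum and integral over $[0,N)^{mn}$ and carry out the inner integration explicitly. For any fixed nonzero $\bar q'$, pick an index $j$ with $q_j'\neq 0$; a linear change of variable in $u_{i,j}$ combined with unfolding along the arithmetic progression $v_i+N\Z$ then yields
\begin{equation*}
\int_{[0,N)^n}\sum_{p_i'\equiv v_i\ (N)}\chi'_{c_i}\bigl(p_i'+\langle\bar u_i,\bar q'\rangle\bigr)\,d\bar u_i=2c_iN^{n-1},\qquad c_i:=\vartheta N^{1+w_i}\|\bar q'\|^{-w_i},
\end{equation*}
independently of $\bar q'$. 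Substituting back and using the standard volume estimate that $\sum\{\|\bar q'\|^{-n}:\bar q'\equiv v''\ (N),\ R_1\leq\|\bar q'\|\leq R_2\}\ll N^{-n}\log(R_2/R_1)$ with $R_2/R_1=\upsilon_2/\upsilon_1$ bounded, one obtains $\|\hat f\circ a^s\|_{L^1(\Y)}\ll(2\vartheta)^m$, uniformly in $s$.

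For the $L^2$ bound the same substitution produces a double sum over pairs $(\bar q_1',\bar q_2')$ whose inner integral is
\begin{equation*}
\prod_{i=1}^m\int_{[0,N)^n}\sum_{\substack{p_{1,i}',p_{2,i}'\\ \equiv v_i\ (N)}}\chi'_{c_{1,i}}(p_{1,i}'+\langle\bar u_i,\bar q_1'\rangle)\,\chi'_{c_{2,i}}(p_{2,i}'+\langle\bar u_i,\bar q_2'\rangle)\,d\bar u_i,
\end{equation*}
which is the congruence analogue of the expression handled in \cite[Prop.~4.8]{BG}. I would split according to whether $\bar q_1'$ and $\bar q_2'$ are proportional: the parallel case reduces to the $L^1$-type computation above with an extra decay factor, while in the linearly independent case a two-dimensional change of variables in two of the coordinates of $\bar u_i$, together with the arithmetic-progression unfolding, produces an integrand of size $(2c_{1,i})(2c_{2,i})N^{n-2}/|D|$ where $D$ is a nonzero $2\times 2$ determinant built from coordinates of $(\bar q_1',\bar q_2')$. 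The main obstacle is the bookkeeping of the $N$-factors: one has to verify that, after incorporating the normalisation $N^{-mn}$ from \eqref{con covering preserves measure}, the resulting double sum differs from the one bounded in \cite[Prop.~4.8]{BG} only by a constant depending on $N$ and the fixed parameters, at which point the uniform boundedness in $s$ follows directly from that proposition.
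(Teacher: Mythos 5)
Your framework is the same as the paper's: both use the covering-map identification \eqref{con covering preserves measure} to replace the integral over $\Y$ by $N^{-mn}$ times an integral over $\M_{m\times n}([0,N))$, expand the Siegel transform using \eqref{defLambda3}, and reduce to Propositions 4.6 and 4.8 of \cite{BG}. The difference is in how the congruence constraints are handled. You keep them: the substitution $p_i'=Np_i+v_i$, $\bar q'=N\bar q+v''$ turns the sum into one over arithmetic progressions modulo $N$, and for $L^1$ you compute the inner $\bar u_i$-integral in closed form via the periodicity/unfolding argument, obtaining $2c_iN^{n-1}$; this is correct, since for $q_j'\neq0$ the variable $u_{i,j}$ sweeps the argument over an interval of length $N|q_j'|$, an integer multiple of the period $N$, giving $\int_{\R}\chi'_{c_i}=2c_i$ for that variable and a factor $N^{n-1}$ from the remaining ones. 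The paper instead discards the congruence constraints at the outset: since all summands are non-negative, the expression in \eqref{sum2} is majorized by the corresponding sum over all of $\Z^{m+n}$ (replacing $N\bar q+v''$ by $\bar q$ and $Np_i+v_i$ by $p_i$), and after using $1$-periodicity of the integrand in $\bar u_i$ to convert $N^{-mn}\int_{[0,N]^{mn}}$ into $\int_{[0,1]^{mn}}$, what remains is \emph{literally} the expression controlled by Propositions 4.6 and 4.8 of \cite{BG}, with no residual $N$-dependence. That route sidesteps all the $N$-bookkeeping you flag as the ``main obstacle'' in your $L^2$ step: rather than checking that your congruence-constrained double sum differs from the unconstrained one by an $N$-dependent constant, one simply majorizes by the unconstrained sum directly. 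Your $L^1$ computation is complete and in fact gives a cleaner closed-form constant; your $L^2$ argument as written is only a sketch, and the quickest way to close it is precisely the paper's majorization, which you could just as well adopt from the start.
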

\begin{proof}
    We note that there exists $0< \upsilon_1 < \upsilon_2 $ and $\vartheta >0$ such that the support of $f$ is contained in the set $$ A= \{(\bar{x},\bar{y}) \in \R^{m+n} : \upsilon_1 \leq \|N \bar{y}\| \leq \upsilon_2, |Nx_i| \leq \vartheta\|N \bar{y}\|^{-w_i}, i=1,\ldots, n\},$$
    and without loss of generality we may assume that $f$ is the characteristic function of above set. 
    Clearly for $u \in V$, we have

    \begin{align}
        \hat{f}(a^s\Lambda_u) = \sum_{(\bar{p},\bar{q}) \in \Z^{m+n}} f\left( \frac{Np_1 + v_1+ \langle \bar{u}_1, N\bar{q} + v'' \rangle }{N} ,\ldots, \frac{Np_m + v_m+ \langle \bar{u}_m, N\bar{q} + v'' \rangle }{N} , \frac{N\bar{q} +v'' }{N}  \right).
    \end{align}
    If we  denote by $\chi'_M$ to be characteristic function of interval $[-M,M]$, then we get
    \begin{align}
        \hat{f}(a^s\Lambda_u) &= \sum_{  \upsilon_1 e^s \leq \|N\bar{q} + v''\| \leq \upsilon_2} \sum_{\bar{p} \in \Z^m}  \prod_{i=1}^m \chi'_{\frac{\vartheta}{\|N\bar{q} + v''\|^{w_i}}} \left(Np_i + v_i+ \langle \bar{u}_i, N\bar{q} + v'' \rangle   \right)  \nonumber \\
        \label{sum2}
        &= \sum_{  \upsilon_1 e^s \leq \|N\bar{q} + v''\| \leq \upsilon_2}  \prod_{i=1}^m  \left( \sum_{p_i \in \Z}  \chi'_{\frac{\vartheta}{\|N\bar{q} + v''\|}} \left(Np_i + v_i+ \langle \bar{u}_i, N\bar{q} + v'' \rangle \right)  \right) \\
         &\leq \sum_{  \upsilon_1 e^s \leq \|\bar{q}\| \leq \upsilon_2}  \prod_{i=1}^m  \left( \sum_{p_i \in \Z}  \chi'_{\frac{\vartheta}{\|\bar{q}\|}} \left(p_i + \langle \bar{u}_i, \bar{q}  \rangle \right)  \right). \nonumber
    \end{align}

    Thus by \eqref{con covering preserves measure}, we have  
    \begin{align}
        \int_{\Y} |\hat{f} \circ a^s| \, d\mu_{\Y} &= \frac{1}{N^{nm}} \left( \int_{u \in \M_{m \times n}([0,N])} \hat{f}(a^s\Lambda_u) \, du \right) \nonumber \\
        &\leq \frac{1}{N^{nm}} \left( \sum_{  \upsilon_1 e^s \leq \|\bar{q}\| \leq \upsilon_2}  \prod_{i=1}^m \left(  \sum_{p_i \in \Z}  \int_{[0,N]^n} \chi'_{\frac{\vartheta}{\|\bar{q} \|}} \left(p_i + \langle \bar{u}_i, \bar{q} \rangle  \right)  d\bar{u_i} \right) \right) \nonumber \\
        &= \sum_{  \upsilon_1 e^s \leq \|\bar{q}\| \leq \upsilon_2}  \prod_{i=1}^m \left(  \sum_{p_i \in \Z}  \int_{[0,1]^n} \chi'_{\frac{\vartheta}{\|\bar{q} \|}} \left(p_i + \langle \bar{u}_i, \bar{q} \rangle  \right)  d\bar{u_i} \right), \label{con 1 1}
    \end{align}
    and
    \begin{align}
        \int_{\Y} |\hat{f} \circ a^s|^2 \, d\mu_{\Y} &= \frac{1}{N^{nm}} \left( \int_{u \in \M_{m \times n}([0,N])} \hat{f}(a^s\Lambda_u)^2 \, du \right)  \nonumber\\
        &\leq \frac{1}{N^{nm}} \left( \sum_{  \upsilon_1 e^s \leq \|\bar{q}\|, \|\bar{l}\| \leq \upsilon_2}  \prod_{i=1}^m \left(  \sum_{p_i, r_i \in \Z}  \int_{[0,N]^n} \chi'_{\frac{\vartheta}{\|\bar{q} \|}} \left(p_i + \langle \bar{u}_i, \bar{q} \rangle  \right) \chi'_{\frac{\vartheta}{\|\bar{l} \|}} \left(r_i + \langle \bar{u}_i, \bar{l} \rangle  \right)  d\bar{u_i} \right) \right) \nonumber \\
        &= \sum_{  \upsilon_1 e^s \leq \|\bar{q}\|, \|\bar{l}\| \leq \upsilon_2}  \prod_{i=1}^m \left(  \sum_{p_i, r_i \in \Z}  \int_{[0,1]^n} \chi'_{\frac{\vartheta}{\|\bar{q} \|}} \left(p_i + \langle \bar{u}_i, \bar{q} \rangle  \right) \chi'_{\frac{\vartheta}{\|\bar{l} \|}} \left(r_i + \langle \bar{u}_i, \bar{l} \rangle  \right)  d\bar{u_i} \right). \label{con 1 2}
    \end{align}

   Note that the expression \eqref{con 1 1} is the same as equation (4.3) of \cite{BG}. The boundedness of the latter expression, independent of $s$, was proved in Proposition 4.6 of \cite{BG}. Similarly, the expression \eqref{con 1 2} is the same as the equation obtained in Proposition 4.8 of \cite{BG}, and the boundedness of the  latter expression, independent of $s$, was the content of the same Proposition.
\end{proof}

\subsubsection{Truncated Siegel Transforms} \label{Con Truncated Siegel}\hfill \\

As in Section \ref{Aff Truncated Siegel}, we define for a bounded function $f: \R^{m+n} \rightarrow \R$ with compact support, the \textit{truncated Siegel transform} of $f$ as $$\hat{f}^{(L)} := \hat{f}. \eta_L,$$ where $\eta_L$ is defined below in Lemma \ref{con trun 1}. We also record some basic properties of this transform in Lemma \ref{con trun 2}.
\begin{lem}
\label{con trun 1}
    For every $c>1$, there exists a family $(\eta_L)$ in $\C_c^{\infty}(\X)$ satisying
    \begin{equation*}
        0\leq \eta_L \leq 1,\ \ \eta_L= 1 \text{ on } \{\alpha \leq c^{-1}L\}, \ \ \eta_L =0 \text{ on } \{ \alpha > cL\}, \ \ \|\eta_L\|_{C^k} \ll 1.
    \end{equation*}
\end{lem}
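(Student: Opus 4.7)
My plan is to follow the construction of \cite[Lem.~4.11]{BG}, which adapts essentially unchanged to the congruence setting because our $\alpha$ is defined in exactly the same way, through the projection $\pi$ from the same geometry-of-numbers function $\alpha_d$. The idea is to manufacture $\eta_L$ as the convolution of a rough cut-off of a sublevel set of $\alpha$ with a fixed smooth bump on $G$.

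The key input I would first establish, directly from the definition \eqref{def alpha_d} of $\alpha$ as a supremum of inverse covolumes of rational subspaces, is an almost-invariance estimate
$$
C(r)^{-1}\,\alpha(x)\ \le\ \alpha(gx)\ \le\ C(r)\,\alpha(x)\qquad\forall\,x\in\X,\ g\in B_G(r),
$$
with $C(r)\to 1$ as $r\to 0$ (the action of $G$ on covolumes of rational subspaces is uniformly Lipschitz on a small neighborhood of the identity). Given the parameter $c>1$, I would fix $r=r(c)>0$ once and for all so that $C(r)<c^{1/2}$.

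Next I would pick a non-negative $\rho\in\C_c^\infty(G)$ supported in $B_G(r)$ with $\int_G\rho\,dm_G=1$, and set
$$
\eta_L(x)\ :=\ \int_G \rho(g)\,\chi_L(g^{-1}x)\,dm_G(g),
$$
where $\chi_L$ is the indicator of the compact set $\{\alpha\le L\}\subset\X$ (compactness follows from Mahler's criterion together with Proposition~\ref{con alpha 2}). The oscillation bound then forces $\chi_L(g^{-1}x)=1$ for every $g\in B_G(r)$ when $\alpha(x)\le c^{-1}L$, so $\eta_L=1$ there; symmetrically $\eta_L=0$ on $\{\alpha>cL\}$, and $0\le\eta_L\le 1$ is immediate.

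The uniform estimate $\|\eta_L\|_{C^k}\ll 1$ then follows by the standard device of pushing differentiation off of $\chi_L$ onto the fixed bump $\rho$: after the substitution $g\mapsto \exp(-tY)g$ in the integral defining $\eta_L(\exp(tY)x)$ and using unimodularity of $G$, one obtains $\D_Z\eta_L(x)=\int_G(\D_Z\rho)(g)\,\chi_L(g^{-1}x)\,dm_G(g)$ for every monomial $Z$ of degree at most $k$, whence $\|\eta_L\|_{C^k}\ll_{r}\|\rho\|_{C^k}$. Since $\rho$ depends only on $c$ (not on $L$), this bound is uniform in $L$. The only mild obstacle is calibrating $r$ against $c$ so that the multiplicative oscillation of $\alpha$ on $B_G(r)$ is strictly dominated by the ratio between the thresholds $c^{-1}L$ and $cL$; once that is arranged, the argument is routine.
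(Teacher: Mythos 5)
Your construction is the same one the paper defers to \cite[Lem.~4.11]{BG}: mollify the indicator of the sublevel set $\{\alpha\le L\}$ by a fixed smooth bump on $G$, with the bump radius calibrated against the Lipschitz oscillation of $\alpha$ under small group elements, and push derivatives onto the bump to get the uniform $C^k$ bound. The argument transfers verbatim to the congruence setting because $\alpha=\alpha_d\circ\pi$ and $\pi$ intertwines the $G$-actions, so your proof is correct and essentially identical to the paper's (referenced) one.
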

\begin{lem}
\label{con trun 2}
    For $f \in \C_c^{\infty}(\R^{m+n})$, the truncated Siegel transform $\hat{f}^{(L)}:= \hat{f}. \eta_L$ is in $\C-C^{\infty}(\X)$, and it satisfies
    \begin{align*}
      \|\hat{f}^{(L)}\|_{L^p(\X)} \leq \|\hat{f}\|_{L^p(\X)} &\ll_{supp(f),p} \|f\|_{C^0} \text{    for all } p<m+n  \\
      \|\hat{f}^{(L)}\|_{C^0} &\ll_{supp(f)} L \|f\|_{C^0}  \\
      \|\hat{f}^{(L)}\|_{C^k} &\ll_{supp(f)} L \|f\|_{C^k}  \\
      \|\hat{f}-\hat{f}^{(L)}\|_{L^1(\X)} &\ll_{supp(f),\tau} L^{-\tau}\|f\|_{C^0} \text{    for all } \tau<m+n-1 \\
      \|\hat{f}-\hat{f}^{(L)}\|_{L^2(\X)} &\ll_{supp(f),\tau} L^{-(\tau-1)/2}\|f\|_{C^0} \text{    for all } \tau<m+n-1. 
    \end{align*}
    Moreover, the implied constants are uniform when $supp(f)$ is contained in a fixed compact set.
\end{lem}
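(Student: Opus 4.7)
The plan is to mirror the strategy used in the affine case (which in turn mirrors Lemmas 4.11--4.12 of \cite{BG}), with the pointwise bound $|\hat f| \ll_{supp(f)} \|f\|_{C^0}\,\alpha$ from Proposition \ref{con alpha 1} and the tail estimate $\mu_\X(\{\alpha \geq L\}) \ll_p L^{-p}$ for $p < m+n$ from Proposition \ref{con alpha 2} as the essential ingredients. Everything descends from the standard quotient via $\pi : \X \to \X_d$, so no congruence-specific estimate is required beyond what these two Propositions already package.

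For the first inequality, $|\hat f^{(L)}| \leq |\hat f|$ is immediate from $0 \leq \eta_L \leq 1$, and then Proposition \ref{con alpha 1} combined with $\alpha \in L^p(\X)$ for $p < m+n$ gives the stated $L^p$-bound. For the $C^0$-bound, $\eta_L$ is supported on $\{\alpha \leq cL\}$ by Lemma \ref{con trun 1}, so $|\hat f^{(L)}| \leq |\hat f|\,\mathbf{1}_{\{\alpha \leq cL\}} \ll_{supp(f)} \|f\|_{C^0}\, L$. For the $C^k$-bound I would apply Leibniz to $\hat f \cdot \eta_L$ and use $\|\eta_L\|_{C^k} \ll 1$, observing that for any monomial $Z$ in $\mathrm{Lie}(G)$ the derivative $\D_Z \hat f$ is again the Siegel transform of a bounded, compactly supported function whose $C^0$-norm is bounded by $\|f\|_{C^k}$ times a constant depending only on $\mathrm{supp}(f)$ (because the action of $Y \in \mathrm{Lie}(G)$ brings down a factor linear in $v$, and $v$ ranges over the fixed support of $f$); Proposition \ref{con alpha 1} then applies to each Leibniz summand.

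The $L^1$ and $L^2$ estimates of $\hat f - \hat f^{(L)}$ form the main calculation. Since $\hat f^{(L)} = \hat f$ on $\{\alpha \leq c^{-1}L\}$, the difference is supported on $\{\alpha > c^{-1}L\}$ and there $|\hat f - \hat f^{(L)}| \leq |\hat f| \ll_{supp(f)} \|f\|_{C^0}\,\alpha$. A layer-cake computation
\begin{equation*}
\int_{\{\alpha > L/c\}} \alpha \, d\mu_\X \;\leq\; \tfrac{L}{c}\, \mu_\X(\alpha > L/c) + \int_{L/c}^{\infty} \mu_\X(\alpha > s)\, ds \;\ll_p\; L^{1-p}
\end{equation*}
for any $1 < p < m+n$ yields the $L^1$-estimate with $\tau := p-1$, hence with any $\tau < m+n-1$. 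The same method applied to $\alpha^2$ (integrating $2s\,\mu_\X(\alpha > s)\,ds$ from $L/c$) gives $\int_{\{\alpha > L/c\}} \alpha^2\, d\mu_\X \ll_p L^{2-p}$, and taking square roots produces the claimed $L^{-(\tau-1)/2}$ rate.

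The only obstacle is book-keeping rather than conceptual: one must push $p$ arbitrarily close to $m+n$ in Proposition \ref{con alpha 2} and translate this into the sharp range $\tau < m+n-1$ in both the $L^1$ and $L^2$ statements. Uniformity of implied constants when $\mathrm{supp}(f)$ is confined to a fixed compact set is inherited from Proposition \ref{con alpha 1}. Since the congruence setting alters only the ambient measure, and both the pointwise bound $|\hat f| \ll \|f\|_{C^0}\alpha$ and the tail bound for $\alpha$ descend through the measure-preserving projection $\pi$, nothing beyond the affine proof is needed.
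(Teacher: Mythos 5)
Your proposal is correct and follows exactly the route the paper has in mind: the paper skips the proof with the comment that it is an exact replica of Lemmas 4.11--4.12 of \cite{BG}, and your argument reproduces that BG argument using the pointwise bound from Proposition \ref{con alpha 1}, the tail estimate from Proposition \ref{con alpha 2}, the support properties of $\eta_L$ from Lemma \ref{con trun 1}, Leibniz plus the observation that $\D_Z\hat f$ is again a Siegel transform for the $C^k$ bound, and a layer-cake computation for the $L^1$ and $L^2$ tails. The small notational slip (``a factor linear in $v$'' should read ``linear in $z$'', the lattice-point variable) is immaterial.
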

The proofs of Lemmas \ref{con trun 1}, \ref{con trun 2} are exact replicas of the proofs of Lemmas 4.11 and 4.12 of \cite{BG} respectively, hence are skipped.

\subsection{CLT for Smooth Siegel Transform}
 
\begin{thm}
    Assume $f \in \C_c^{\infty}(\R^{m+n})$ satisfies $f \geq 0$ and $supp(f) \subset \{(x_{m+1}, \ldots, x_{m+n}) \neq 0\}$. Assume  $m \geq 2$. Then, consider the sequence of averages
    \begin{equation*}
        F_M(y):= \frac{1}{\sqrt{M}} \sum_{s=0}^{M-1}(\hat{f}(a^s)-  \mu_{\Y}(\hat{f} \circ a^s)) \text{     with } y \in \Y
    \end{equation*}
    where $a={diag}(e^{w_1},\ldots,e^{w_m},e^{-1},\ldots,e^{-1}) $ . Then the variance
    \begin{equation*}
        \sigma_f^2 := \frac{1}{\zeta_N(m+n)} \sum_{s= -\infty}^{\infty}   \left(\sum_{\substack{k_1 \geq 1 \\ gcd(k_1,q)=1 }} \sum_{\substack{k_2 \in \Z \setminus \{0\} \\ k_2 = k_1 (mod \ {q})}} \int_{\R^{m+n}} \left( (f \circ a^s)(k_1 x) f(k_2 x) \right) \, dx \right)
    \end{equation*}
    is finite, where $\zeta_N(r):= \sum_{\substack{ k \geq 1 \\ \gcd(k,N)=1}} k^{-r}$. Also, for every $\xi \in \R$,
    \begin{equation*}
        \mu_{\Y}(\{ y \in \Y : F_M(y) < \xi\}) \rightarrow Norm_{\sigma_f}(\xi)
    \end{equation*}
    as $M\rightarrow \infty$.
\end{thm}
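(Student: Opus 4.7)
The plan is to follow the cumulant-method argument from \cite{BG} essentially line by line, in the same way the proof in the inhomogeneous case was handled earlier in this paper. All of the abstract ingredients needed for that scheme have now been established in the congruence setting: quantitative 2-mixing (Theorem \ref{Con Mixing}), quantitative multiple mixing for $a_t$-orbits of $\Y$ (Corollary \ref{con aff}), non-divergence of $a^s\Y$ (Theorem \ref{con lemma 2} and Proposition \ref{Siegel COn 1}), uniform $L^1$- and $L^2$-bounds on $\hat f\circ a^s$ (Proposition \ref{siegel main con}), $L^p$-integrability of $\alpha$ (Proposition \ref{con alpha 2}), and a smooth truncation scheme (Lemmas \ref{con trun 1}, \ref{con trun 2}). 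One therefore replaces $\hat f$ by $\hat f^{(L)}$ with $L=L(M)$ slowly growing, controls the truncation error via Lemma \ref{con trun 2} and Proposition \ref{siegel main con}, and uses Corollary \ref{con aff} to show that all joint cumulants of $F_M^{(L)}$ of order $\geq 3$ tend to zero as $M\to\infty$. This reduces the theorem to identifying the limit of $\|F_M^{(L)}\|_{L^2(\Y)}^2$.

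Repeating the reduction from the proof of Theorem \ref{CLT2'}, one obtains
\begin{equation*}
\bigl\|F^{(L)}_M\bigr\|_{L^2(\Y)}^2 = \Theta_\infty(0)+2\sum_{s=1}^{K-1}\Theta_\infty(s)+o(1),
\end{equation*}
where $\Theta_\infty(s):=\int_{\X}(\hat f\circ a^s)\hat f\,d\mu_{\X}-\mu_\X(\hat f)^2$. The only step that is genuinely different from the proof of Theorem \ref{CLT2'} is the evaluation of $\Theta_\infty(s)$, since Proposition \ref{con S R} replaces Proposition \ref{aff S R}. Applying the same polarization identity used before,
\begin{equation*}
\Theta_\infty(s) = \tfrac12\Bigl(\textstyle\int_\X(\hat f\circ a^s+\hat f)^2\,d\mu_\X - 2\int_\X(\hat f)^2\,d\mu_\X - 2\mu_\X(\hat f)^2\Bigr),
\end{equation*}
and using parts (a) and (b) of Proposition \ref{con S R} together with the $a^s$-invariance of Lebesgue measure on $\R^{m+n}$, the contributions of the $(\int f)^2$ terms cancel and one is left with the cross term
\begin{equation*}
\Theta_\infty(s) = \frac{1}{\zeta_N(m+n)}\sum_{\substack{k_1\ge1\\\gcd(k_1,N)=1}}\sum_{\substack{k_2\in\Z\setminus\{0\}\\ k_2\equiv k_1\ (N)}}\int_{\R^{m+n}} (f\circ a^s)(k_1 x)\,f(k_2 x)\,dx,
\end{equation*}
which is exactly the summand in the claimed formula for $\sigma_f^2$.

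Finiteness of $\sigma_f^2$ is then routine: by the hypothesis $\mathrm{supp}(f)\subset\{(x_{m+1},\dots,x_{m+n})\neq 0\}$ combined with the fact that $a^s$ expands the $y$-coordinates by $e^{-s}$ (or contracts them for $s<0$), the support condition forces $(f\circ a^s)(k_1 x)\,f(k_2 x)=0$ outside a bounded range of $s$, and for each fixed $s$ the double sum over $k_1,k_2$ converges because $f$ has compact support, exactly as in the affine case. Combined with the truncation controls already developed for $(\hat f - \hat f^{(L)})$, this yields convergence of $\|F_M^{(L)}\|_{L^2}^2$ to $\sigma_f^2$ and, together with the vanishing of the higher cumulants, the desired CLT.

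The main potential obstacle is in fact not the variance computation — which is a purely algebraic consequence of Proposition \ref{con S R} — but rather making sure that the multiple-mixing bound of Corollary \ref{con aff} is strong enough to kill all higher cumulants uniformly in $L=L(M)$. This is handled exactly as in Section 5 of \cite{BG}: the $C^k$-blowup of $\hat f^{(L)}$ is $O(L)$ (Lemma \ref{con trun 2}), so choosing $L$ slowly and $K$ comparable to $\log M$, the error terms $e^{-\delta D(t_1,\dots,t_r)} L^r$ from Corollary \ref{con aff} remain negligible; no congruence-specific modification is needed beyond what has been established. The rest of the argument then runs verbatim as in \cite{BG}.
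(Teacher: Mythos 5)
Your proposal follows the paper's proof closely and correctly: the same cumulant-method reduction, the same truncation scheme, the same polarization trick applied to the congruence Rogers formula (Proposition \ref{con S R}), and the same identification of $\Theta_\infty(s)$. One step, however, is not quite right as you've phrased it: the finiteness of $\sigma_f^2$ is \emph{not} ``exactly as in the affine case.'' In the paper's affine (Section 3) setting the variance is $\sum_s\int f(a^s x)f(x)\,dx$ with no $k_1,k_2$-sum, and there the support condition does force $\Theta_\infty(s)=0$ for all but finitely many $s$. Here, with the extra sum over $(k_1,k_2)$, the range of $s$ for which $(f\circ a^s)(k_1x)f(k_2x)$ is nonzero is $|s-\log(k_1/k_2)|\ll 1$, which is \emph{unbounded} as $(k_1,k_2)$ varies; so ``bounded range of $s$'' fails, and ``each fixed $s$ yields a convergent $(k_1,k_2)$-sum'' by itself is also insufficient, since the inner sum grows in $|s|$ without the extra decay that comes from coupling $s$ to $\log(k_1/k_2)$. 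The paper sidesteps this entirely: since $f\ge 0$, the constrained $(k_1,k_2)$-sum (with $\gcd(k_1,N)=1$, $k_2\equiv k_1\ (\mathrm{mod}\ N)$) is bounded termwise by the unconstrained sum over all $k_1\ge 1$, $k_2\in\Z\setminus\{0\}$, and the finiteness of the latter triple sum over $s,k_1,k_2$ is exactly the content of Theorem 5.1 of \cite{BG} (the $N=1$ linear-forms case). You should replace your finiteness paragraph with that monotonicity-plus-citation argument; everything else in your outline is the paper's proof.
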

\begin{proof}
    All the steps given in the proof of [Thm 5.1, \cite{BG}] apply here directly to give the result here. The only change required is after [eq (5.36) \cite{BG}], where we are computing the value of variance. More precisely, following the same steps we get
    \begin{equation*}
        \|F_M^{(L)}\|_{L^2(\Y)} \rightarrow \Theta_{\infty}(0) + 2 \sum_{s=1}^{\infty} \Theta_{\infty}(s) = \sum_{s= -\infty}^{\infty} \Theta_{\infty}(s)
    \end{equation*}
    as $M \rightarrow \infty$, where
    \begin{align}
        \Theta_{\infty}(s) &:= \int_{\X}(\hat{f} \circ a^s) \hat{f}\, d\mu_{\X} -\mu_{\X}(\hat{f})^2.
    \end{align}

Using Proposition \ref{con S R}, we get that 
\begin{align*}
    \Theta_\infty (s) &= \int_{\X} (\hat f\circ a^s)\hat f\, d\mu_\X -\mu_\X(\hat f)^2 \\
    &=  \frac{1}{2} \left(\int_{\X}(\hat{f} \circ a^s + \hat{f})^2\, d\mu_{\X} - 2\int_{\X}( \hat{f})^2\, d\mu_{\X}  -2\mu_\X(\hat f)^2 \right) \\
    &= \frac{1}{2} \left(\left( \int_{\R^d}(f \circ a^s + f) \right)^2 - 2  \left( \int_{\R^d} f \right)^2  -2 \left( \int_{\R^d} f \right)^2  \right)\\
    &+\frac{1}{2\zeta_N(m+n)} \sum_{\substack{k_1 \geq 1 \\ gcd(k_1,q)=1 }} \sum_{\substack{k_2 \in \Z \setminus \{0\} \\ k_2 = k_1 (mod \ {q})}} \int_{\R^{m+n}} ( (f \circ a^s +f)(k_1 x) (f \circ a^s +f)(k_2 x)\\ &-2 f(k_1 x) f(k_2 x)) \, dx  \\
    &= \frac{1}{\zeta_N(m+n)} \left(\sum_{\substack{k_1 \geq 1 \\ gcd(k_1,q)=1 }} \sum_{\substack{k_2 \in \Z \setminus \{0\} \\ k_2 = k_1 (mod \ {q})}} \int_{\R^{m+n}} \left( (f \circ a^s)(k_1 x) f(k_2 x) \right) \, dx \right).
\end{align*} 
Hence, we get that
    \begin{align}
    \label{var2}
     \sum_{s= -\infty}^{\infty} \Theta_{\infty}(s) &= \frac{1}{\zeta_N(m+n)} \sum_{s= -\infty}^{\infty}   \left(\sum_{\substack{k_1 \geq 1 \\ gcd(k_1,q)=1 }} \sum_{\substack{k_2 \in \Z \setminus \{0\} \\ k_2 = k_1 (mod \ {q})}} \int_{\R^{m+n}} \left( (f \circ a^s)(k_1 x) f(k_2 x) \right) \, dx \right) \\
     \label{var2'}
     & \leq \frac{1}{\zeta_N(m+n)} \sum_{s= -\infty}^{\infty}   \left(\sum_{k_1 \geq 1} \sum_{k_2 \in \Z \setminus \{0\} } \int_{\R^{m+n}} \left( (f \circ a^s)(k_1 x) f(k_2 x) \right) \, dx \right) .  
    \end{align}
    The finiteness of the sum in \eqref{var2'} is shown in [\cite{BG}, Thm 5.1]. Hence \eqref{var2} is also finite. Then, resuming the steps again from [Section 5.3 in \cite{BG}], we complete the proof.
\end{proof}

\subsection{CLT for counting functions and the proof of Theorem \ref{CLT3}} \hfill \\

Let $$\Omega_T := \{(\bar{x},\bar{y}) \in \R^{m+n}:1\leq \|N \bar{y}\| < T, |Nx_i|< \vartheta_i \|N\bar{y}\|^{-w_i}, i=1,\ldots,m\}.$$ Then, it is clear that $$\Delta_T(u,v)= |\Lambda_{u,v} \cap \Omega_T| + \mathcal{O}(1) \text{,   for } T>0.  $$ With $a= {diag}(e^{w_1},\ldots,e^{w_m},e^{-1},\ldots,e^{-1})$,  we note that for any integer $M \geq 1$, $$\Omega_{e^M} =  \sqcup_{s=0}^{M-1}a^{-s} \Omega_e, $$ and thus $$|\Delta_{u,v} \cap \Omega_{e^M}| = \sum_{s=0}^{M-1} \hat{\chi}(a^s \Lambda_{u,v}),$$ where $\chi$ denotes the characteristic function of the set $\Omega_e$. Define $$F_M:= \frac{1}{\sqrt{M}} \sum_{s=0}^{N-1} \left( \hat{\chi} \circ a^s - \mu_{\Y}(\hat{\chi} \circ a^s) \right).$$

\begin{thm}
\label{CLT3'}
    If $m \geq 2$, then for every $\xi \in \R$, 
    \begin{equation*}
        \mu_{\Y}(\{ y \in \Y : F_M(y) < \xi\}) \rightarrow Norm_{\sigma}(\xi)
    \end{equation*}
    as $M\rightarrow \infty$, where 
    \begin{equation*}
    \sigma^2= 2^m \vartheta_1\vartheta_2 \ldots \vartheta_m \omega_n \text{ with } \omega_n := \int_{S^{n-1}}  d\bar{z}.
\end{equation*}
\end{thm}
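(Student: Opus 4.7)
The plan is to mirror the proof of Theorem \ref{CLT2'} from the affine case, substituting the congruence-setting tools—the integration formula \eqref{con covering preserves measure}, Proposition \ref{con S R} (congruence Rogers), Proposition \ref{siegel main con} (Siegel non-divergence), and Corollary \ref{con aff} (multiple mixing on $\Y$)—for their affine counterparts. First I would approximate the characteristic function $\chi$ of $\Omega_e$ by a family $f_{\epsilon}\in\Cc(\R^{m+n})$ of non-negative smooth functions with supports in an $\epsilon$-neighbourhood of $\Omega_e$, satisfying $\chi\le f_{\epsilon}\le 1$, $\|f_{\epsilon}-\chi\|_{L^1(\R^{m+n})}\ll\epsilon$, $\|f_{\epsilon}-\chi\|_{L^2(\R^{m+n})}\ll\epsilon^{1/2}$, and $\|f_{\epsilon}\|_{C^k}\ll\epsilon^{-k}$. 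Applying the preceding CLT for smooth Siegel transforms to $f_{\epsilon}$ along a suitable diagonal $\epsilon=\epsilon(M)\to 0$ gives a CLT for the corresponding smoothed averages.

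Next I would prove the congruence analog of Proposition \ref{needednext}:
\[
\int_{\Y}\bigl|\hat{f_{\epsilon}}\circ a^s-\hat{\chi}\circ a^s\bigr|\,d\mu_{\Y}\ \ll\ \epsilon+e^{-s}\qquad\text{for all }s\ge 0.
\]
Following that template, dominate $\hat{f_{\epsilon}}-\hat{\chi}$ pointwise by $\hat{\chi}_{1,\epsilon}+\hat{\chi}_{2,\epsilon}+\hat{\chi}_{3,\epsilon}$, where $\chi_{j,\epsilon}$ are characteristic functions of three thin slab regions in the $\epsilon$-thickening of $\Omega_e$ (the two $\|N\bar y\|$-boundary slabs and the $|Nx_i|$-boundary slab). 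Each integral $\int_{\Y}\hat{\chi}_{j,\epsilon}(a^s\cdot)\,d\mu_{\Y}$ is then unfolded via \eqref{con covering preserves measure} and evaluated exactly as in \eqref{con 1 1}. After the change of variables $\bar q\mapsto N\bar q+v''$, the $N$-rescaling built into the definition of $\Omega_e$ is absorbed by the $1/N^{nm}$ factor of \eqref{con covering preserves measure}, so the lattice-point sums in $\bar q$ reduce to the shape already handled in the proof of Proposition \ref{needednext}.

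Finally I would run the cumulant argument of \cite{BG} exactly as in the proof of Theorem \ref{CLT2'}, invoking Corollary \ref{con aff} in place of its affine counterpart, to obtain
\[
\bigl\|\tilde F^{(\epsilon,L)}_M\bigr\|_{L^2(\Y)}^{2}\ \longrightarrow\ \sum_{s\in\Z}\Theta_\infty(s)\qquad\text{as }M\to\infty,
\]
where $\Theta_\infty(s):=\int_{\X}(\hat\chi\circ a^s)\hat\chi\,d\mu_{\X}-\mu_{\X}(\hat\chi)^2$. The main obstacle is the variance calculation. Applying Proposition \ref{con S R}(b) to $\chi+\chi\circ a^s$, together with the polarization identity $2ab=(a+b)^2-a^2-b^2$ and the $a^s$-invariance of $\mu_{\X}$, expresses $\Theta_\infty(s)$ as a double sum over $k_1\ge 1$ with $\gcd(k_1,N)=1$ and $k_2\in\Z\setminus\{0\}$ with $k_2\equiv k_1\pmod N$. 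Because $a^s\Omega_e\cap\Omega_e=\emptyset$ for $s\neq 0$, the sum in $s$ truncates sharply; one must then carefully evaluate the remaining $(s,k_1,k_2)$-sum and verify that it assembles to the claimed value $\sigma^2=2^m\vartheta_1\cdots\vartheta_m\omega_n$. The $L^1$-bound from the second step then upgrades the smoothed CLT to the CLT for $F_M$, completing the proof.
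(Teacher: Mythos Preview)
Your overall plan matches the paper's proof closely: approximate $\chi$ by $f_\epsilon$, reprove the analogue of Proposition~\ref{needednext} in the congruence setting (this is exactly what the paper does as Proposition~\ref{Con approx 1}, via \eqref{con covering preserves measure} and the reduction to the integer-point estimates already used in the affine case), and run the cumulant machinery of \cite{BG} using Corollary~\ref{con aff}. These parts are fine.

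The gap is in the variance step. You write that ``because $a^s\Omega_e\cap\Omega_e=\emptyset$ for $s\neq 0$, the sum in $s$ truncates sharply.'' That reasoning is copied from the affine case, where Proposition~\ref{aff S R} gives $\Theta_\infty(s)=\int_{\R^d}\chi(a^s x)\chi(x)\,dx$ and the truncation is immediate. In the congruence setting, however, Proposition~\ref{con S R}(b) produces
\[
\Theta_\infty(s)=\frac{1}{\zeta_N(m+n)}\sum_{\substack{k_1\ge 1\\ \gcd(k_1,N)=1}}\ \sum_{\substack{k_2\in\Z\setminus\{0\}\\ k_2\equiv k_1\ (\mathrm{mod}\ N)}}\int_{\R^{m+n}}\chi(k_1 a^s x)\,\chi(k_2 x)\,dx,
\]
and for $k_1\neq k_2$ the integral $\int\chi(k_1 a^s x)\chi(k_2 x)\,dx$ is generally \emph{nonzero} for $s\neq 0$ (the supports $k_1^{-1}a^{-s}\Omega_e$ and $k_2^{-1}\Omega_e$ can overlap). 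So there is no sharp truncation in $s$, and the double sum over $(k_1,k_2)$ couples nontrivially with the sum over $s$. The paper deals with this by first summing in $s$ to form $\Xi:=\sum_{s\in\Z}\chi\circ a^s$, the characteristic function of the full cone $\{\|N\bar y\|>0,\ |Nx_i|<\vartheta_i\|N\bar y\|^{-w_i}\}$, and then evaluating $\int_{\R^{m+n}}\Xi(px)\chi(qx)\,dx=\frac{2^m}{N^{m+n}}\bigl(\prod_i\vartheta_i\bigr)\omega_n\,\max(p,q)^{-(m+n)}$ for each pair $(p,q)$, after which the $(p,q)$-sum is computed directly.

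As a consequence, the value $\sigma^2=2^m\vartheta_1\cdots\vartheta_m\omega_n$ that you are aiming for is \emph{not} what the calculation yields: the paper's own computation (in the proof of this very theorem) gives
\[
\sigma^2=\frac{2^{m+1}}{N^{m+n}}\Bigl(\prod_{i=1}^{m}\vartheta_i\Bigr)\omega_n\Bigl(1+\frac{2}{\zeta_N(m+n)}\sum_{r\in S_N}\sum_{q\ge 1}\frac{q-1}{(Nq+r)^{m+n}}\Bigr)=\sigma_{m,n,N},
\]
matching Theorem~\ref{CLT3}. The value stated in Theorem~\ref{CLT3'} is a copy-paste from the affine Theorem~\ref{CLT2'} and is not what the proof actually establishes. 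So even with the correct method you would not have been able to ``verify'' the stated constant.
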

\begin{proof}
    As before, the same steps given in proof of [Thm 6.1, \cite{BG}] work here. The only changes needed include reproving [Prop 6.2 \cite{BG}] in our case, and computation of the variance $\sigma^2$. Hence we show only these steps.
    \begin{prop}
        \label{Con approx 1}
        For every $s \geq 0$, $$\int_{\Y} \left| \hat{f}_{\e} \circ a^s - \hat{\chi} \circ a^s\right| \, d\mu_{\Y} \ll \e + e^{-s}, $$
        where $ f_{\epsilon} \in \Cc(\R^{m+n})$ is a family of non-negative smooth functions approximating $\chi$, whose supports are contained in an $\epsilon$-neighbourhood of the set $\Omega_e$, and $$\chi \leq f_{\epsilon} \leq 1,\  \|f_{\e} - \chi\|_{L^1(\R^{m+n})} \ll \e, \ \| f_{\e}- \chi\|_{L^2(\R^{m+n})} \ll \e^{1/2}, \ \|f_{\e}\|_{C^k} \ll \e^{-k}.$$
    \end{prop}
    \begin{proof}
        We observe that there exists $\vartheta_{i}(\epsilon)> \vartheta_i$ such that $\vartheta_i(\epsilon)= \vartheta_i + \mathcal{O}(\epsilon)$ and $f_{\epsilon} \leq \chi_{\epsilon}$, where $\chi_{\epsilon}$ denotes the characteristic function of the set 
        \begin{equation}
            \{(\bar{x},\bar{y}) \in \R^{m+n}: 1- \epsilon \leq \|N\bar{y}\| \leq e +\epsilon, |Nx_i| < \vartheta_{i}(\epsilon) \|N\bar{y}\|^{-w_i} \text{  for } i= 1,\ldots, m\}.
        \end{equation}

        Then, it follows that
        \begin{equation*}
            |\hat{f_{\epsilon}}(a^s \Lambda) - \hat{\chi}(a^s\Lambda)| = \sum_{v \in \Lambda \setminus \{0\}} (f_{\epsilon}(a^sv)- \chi(a^sv)) \leq \sum_{v \in \Lambda \setminus \{0\}} (\chi_{\epsilon}(a^sv)- \chi(a^sv)) .
        \end{equation*}
        It is clear that $\chi_{\epsilon}- \xi$ is bounded by the sum $\chi_{1,\epsilon} + \chi_{2,\epsilon} + \chi_{3,\epsilon}$ of the characteristic functions of the sets
        \begin{eqnarray*}
        &\{& (\bar{x},\bar{y}) \in \R^{m+n} : 1-\epsilon \leq \|N\bar{y}\| \leq 1,  |Nx_i| < \vartheta_{i}(\epsilon) \|N\bar{y}\|^{-w_i} \text{  for } i= 1,\ldots, m\} \\
        &\{&  (\bar{x},\bar{y}) \in \R^{m+n} : e \leq \|N\bar{y}\| \leq  e+ \epsilon,  |Nx_i| < \vartheta_{i}(\epsilon) \|N\bar{y}\|^{-w_i} \text{  for } i= 1,\ldots, m\} \\
        &\{&  (\bar{x},\bar{y}) \in \R^{m+n} : 1 \leq \|N\bar{y}\| \leq  e,  |Nx_i| < \vartheta_{i}(\epsilon) \|N\bar{y}\|^{-w_i} \text{  for } i= 1,\ldots, m, |x_j|\geq \vartheta_j \|\bar{y}\|^{-w_j} \text{  for some } j\}
        \end{eqnarray*}
        respectively. In particular, we obtain that
        \begin{equation*}
            \hat{f}_{\epsilon}(a^s \Lambda) - \hat{\chi}(a^s\Lambda) \leq \hat{\chi}_{1,\epsilon}(a^s \Lambda) + \hat{\chi}_{2,\epsilon}(a^s \Lambda) + \hat{\chi}_{3,\epsilon}(a^s \Lambda).
        \end{equation*}
        Hence, it remains to show that for $j=1,2,3$, 
        \begin{equation*}
            \int_{\Y} (\hat{\chi}_{j, \epsilon} \circ a^s) \, d\mu_{\Y} \ll \epsilon + e^{-s}.
        \end{equation*}
        As in \eqref{sum2}, we have
        \begin{eqnarray}
            \int_{\Y} \hat{\chi}_{1, \epsilon} \circ a^s  \, d\mu_{\Y} = \frac{1}{N^{nm}} \underset{(1- \epsilon)e^s \leq \|N \bar{q} + v''\| \leq e^s}{ \sum} \prod_{i=1}^{m} \left(  \underset{\bar{p_i} \in \Z}{ \sum} \int_{[0,N]^n}  \chi_{\vartheta_i(\epsilon) \|N\bar{y} + v'' \|^{-w_i}}'(Np_i+ v_i + \langle \bar{u}_i, N\bar{q} + v'' \rangle )   \, d\bar{u_i} \right). \nonumber 
        \end{eqnarray}
        We observe that 
        \begin{equation*}
            \int_{[0,N]^n}  \chi_{\frac{\vartheta_i(\epsilon)}{ \|N\bar{y} + v''\|^{w_i}}}'(Np_i+ v_i + \langle \bar{u}_i, N\bar{q} + v'' \rangle ) \, d\bar{u}_i \ll (\max_k |Nq_k + v_{m+k}|)^{-1} \|N \bar{q} + v''\|^{-w_i} \ll \|N \bar{q} + v''\|^{-1-w_i},
        \end{equation*}
        and moreover this integral is non-zero only when $|p_k|= \mathcal{O}(\|N\bar{q} + v''\|)$. Hence, 
        \begin{equation*}
            \underset{\bar{p_i} \in \Z}{ \sum} \int_{[0,N]^n}  \chi_{\vartheta_i(\epsilon) \|N\bar{y} + v''\|^{-w_i}}'(Np_i+ v_i \langle \bar{u}_i, N\bar{q} + v'' \rangle ) \, dv_i  \, d\bar{u_i} \ll \|N \bar{q} + v''\|^{-w_i}
        \end{equation*}
        and
        \begin{eqnarray*}
            \int_{\Y} (\hat{\chi}_{1, \epsilon} \circ a^s) \, d\mu_{\Y} &\ll& \underset{(1-\epsilon)e^s \leq \|N\bar{q} + v''\| \leq e^s}{ \sum} \prod_{i=1}^{m} \|N\bar{q} + v''\|^{-w_i}  \\
            &\ll& e^{-ns} |\{ \bar{q} \in \Z^n: (1-\epsilon)e^s \leq \|N \bar{q} + v''\| \leq e^s \}| \\
            &\ll& e^{-ns} |\{ \bar{q} \in \Z^n: (1-\epsilon)e^s \leq \| \bar{q} \| \leq e^s \}|.
        \end{eqnarray*}
        The number of integral points in the region $\{ (1-\epsilon)e^s \leq \|\bar{q}\| \leq e^s \}$ can be estimated in terms of its volume. Namely, there exists $r>0$ (depending only on the norm) such that 
        \begin{equation*}
           |\{ \bar{q} \in \Z^n: (1-\epsilon)e^s \leq \|\bar{q}\| \leq e^s \}| \ll  |\{ \bar{y} \in \R^n: (1-\epsilon)e^s -r \leq \|\bar{y}\| \leq e^s +r \}|.
        \end{equation*}
        Hence, 
        \begin{eqnarray*}
            \int_{\Y} (\hat{\chi}_{1, \epsilon} \circ a^s) \, d\mu_{\Y} &\ll& e^{-ns} ((e^s +r)^n- ((1-\epsilon)e^s-r)^n) \\
            &=& (1 +re^{-s})^n- ((1-\epsilon)-re^{-s})^n \\
            &\ll& \epsilon + e^{-s}.
        \end{eqnarray*}
        The integral $\hat{\chi}_{2,\epsilon} \circ a^s$ can be estimated similarly. 

        The integral over $\hat{\chi}_{3,\epsilon} \circ a^s$  calculated similarly and written as a sum of the product of the integrals
        \begin{eqnarray*}
            \int_{[0,N]^n} & \left( \chi_{\frac{\vartheta_j(\epsilon)}{\|N \bar{q}+  v''\|^{w_j}}}'(Np_j + v_j + \langle\bar{u}_j , N \bar{q} + v'' \rangle) - \chi_{\frac{\vartheta_j}{\|\|^{w_j}}}'(Np_j + v_j + \langle\bar{u}_j , N \bar{q} + v'' \rangle)\right) \, d\bar{u}_j \\
            & \ll 2 (\max_k |Nq_k + v_{m+k}|)^{-1}(\vartheta_j(\epsilon)-\vartheta_j) \|N\bar{q}+ v''\|^{-w_j} \ll \epsilon\|N \bar{q} + v''\|^{-1-w_j}
        \end{eqnarray*}
        and the integrals 
        \begin{equation*}
             \int_{[0,N]^n}   \chi_{\frac{\vartheta_i(\epsilon)}{\|N \bar{q}+  v''\|^{w_j}}}'(Np_i + v_i + \langle\bar{u}_i , N \bar{q} + v'' \rangle)  \, d\bar{u}_i \ll 2 (\max_k |Nq_k + v_{m+k}|)^{-1}\vartheta_i(\epsilon) \|N\bar{q}+ v''\|^{-w_i} \ll \|N \bar{q} + v''\|^{-1-w_i}
        \end{equation*}
        with $i \neq j$. We observe that these integrals are non zero only when $|p_j|= \mathcal{O}(\|N \bar{q}+  v''\|)$ and $|p_i|= \mathcal{O}(\|N \bar{q}+  v''\|)$. Hence, we conclude that
        \begin{eqnarray*}
           \int_{\Y} (\hat{\chi}_{3, \epsilon} \circ a^s) \, d\mu_{\Y} \ll   \underset{e^s \leq \|N\bar{q} + v''\| \leq e^{s+1}}{ \sum} \epsilon \prod_{i=1}^{m}  \|N\bar{q} + v''\|^{-w_i} \ll \epsilon \left(\underset{e^s \leq \|\bar{q}\| \leq e^{s+1}}{ \sum}  \|\bar{q}\|^{-n} \right) \ll \epsilon.
        \end{eqnarray*}
        which completes the proof of the proposition.
     \end{proof}

    Now, we compute the variance. Similar to the proof of Theorem 6.1 of \cite{BG}, we get that
        \begin{equation}
            \label{con eq: main variance}
            \left\|\Tilde{F}^{(\e,L)}_M\right\|_{L^2(\Y)}^2 =
            \Theta^{(\e)}_\infty(0)+2\sum_{s=1}^{K-1}\Theta^{(\e)}_\infty (s)+o(1),
        \end{equation}
    where 
    $$
    \Theta^{(\e)}_\infty(s):=\int_{\X} (\hat f_\e\circ a^s)\hat f_\e\, d\mu_\X -\mu_\X(\hat f_\e)^2.
    $$
    Using Proposition \ref{con S R}, we get that 
\begin{align*}
    \Theta^{(\e)}_\infty (s) &= \int_{\X} (\hat f_\e\circ a^s)\hat f_\e\, d\mu_\X -\mu_\X(\hat f_\e)^2 \\
    &=  \frac{1}{2} \left(\int_{\X}(\hat{f_\e} \circ a^s + \hat{f}_\e)^2\, d\mu_{\X} - 2\int_{\X}( \hat{f}_\e)^2\, d\mu_{\X}  -2\mu_\X(\hat f_\e)^2 \right) \\
    &= \frac{1}{2} \left(\left( \int_{\R^d}(f_\e \circ a^s + f_\e) \right)^2 - 2  \left( \int_{\R^d} f_\e \right)^2  -2 \left( \int_{\R^d} f_\e \right)^2  \right)\\
    &+\frac{1}{2\zeta_N(m+n)} \sum_{\substack{k_1 \geq 1 \\ gcd(k_1,q)=1 }} \sum_{\substack{k_2 \in \Z \setminus \{0\} \\ k_2 = k_1 (mod \ {q})}} \int_{\R^{m+n}} \left( (f_\e \circ a^s +f_\e)(k_1 x) (f_\e \circ a^s +f_\e)(k_2 x) \right. \\
    &\left. - 2 f_\e(k_1 x) f_\e(k_2 x) \right) \, dx  \\
    &= \frac{1}{\zeta_N(m+n)} \left(\sum_{\substack{k_1 \geq 1 \\ gcd(k_1,q)=1 }} \sum_{\substack{k_2 \in \Z \setminus \{0\} \\ k_2 = k_1 (mod \ {q})}} \int_{\R^{m+n}} \left( (f_\e \circ a^s)(k_1 x) f_\e(k_2 x) \right) \, dx \right).
\end{align*} \\

Now, for $s \in \Z$ define
    \begin{align}
        \Theta_{\infty}(s) &:= \frac{1}{\zeta_N(m+n)} \left(\sum_{\substack{k_1 \geq 1 \\ gcd(k_1,q)=1 }} \sum_{\substack{k_2 \in \Z \setminus \{0\} \\ k_2 = k_1 (mod \ {q})}} \int_{\R^{m+n}} \left( (\chi \circ a^s)(k_1 x) \chi(k_2 x) \right) \, dx \right).
    \end{align}
It is clear that since $f_\e \geq \chi$ so 
\begin{align}
    &|\Theta^{(\e)}_\infty (s) - \Theta_{\infty}(s)| = \Theta^{(\e)}_\infty (s) - \Theta_{\infty}(s) \nonumber \\
    &= \frac{1}{\zeta_N(m+n)} \left(\sum_{\substack{k_1 \geq 1 \\ gcd(k_1,q)=1 }} \sum_{\substack{k_2 \in \Z \setminus \{0\} \\ k_2 = k_1 (mod \ {q})}} \int_{\R^{m+n}} \left( (f_\e \circ a^s)(k_1 x) f_\e(k_2 x)-  (\chi \circ a^s)(k_1 x) \chi(k_2 x)  \right) \, dx \right) \nonumber\\
    &\leq \frac{1}{\zeta_N(m+n)} \left(\sum_{\substack{k_1 \geq 1 }} \sum_{\substack{k_2 \in \Z \setminus \{0\} }} \int_{\R^{m+n}} \left( (f_\e \circ a^s)(k_1 x) f_\e(k_2 x)-  (\chi \circ a^s)(k_1 x) \chi(k_2 x)  \right) \, dx \right). \label{con 2 1}
\end{align}
The expression in \eqref{con 2 1} is the same as the left hand side of equation (6.17) of \cite{BG}. Hence by equation (6.17) of \cite{BG}, we get that the expression in \eqref{con 2 1} is $\ll \e^{1/2}e^{-(m+n-2)s/2}$. Thus, we have
\begin{equation}
\label{con 2 2}
|\Theta^{(\e)}_\infty(s)-\Theta_\infty(s)|\ll \e^{1/2}e^{-(m+n-2)s/2}.
\end{equation}
Provided that 
$\e=\e(M)\to 0,$
the estimate \eqref{con 2 2} implies that
$$
\left\| \Tilde{F}^{(\e,L)}_M\right\|_{L^2(\Y)}^2 =
\Theta_\infty(0)+2\sum_{s=1}^{K-1}\Theta_\infty (s)+o(1).
$$
Hence, 
$$
\left\|\Tilde{F}^{(\e,L)}_M\right\|_{L^2(\Y)}^2 \to \sigma^2:=\Theta_\infty(0)+2\sum_{s=1}^{\infty}\Theta_\infty (s)
$$
as $M\to\infty$, provided that the latter is finite.

\vspace{0.2cm}

Finally, we compute the limit
        \begin{align*}
            \sigma^2 = \sum_{s= - \infty}^{\infty} \Theta_{\infty}(s)=  \frac{2}{\zeta_N(m+n)} \sum_{s= -\infty}^{\infty}   \left( \sum_{\substack{p \geq 1 \\ gcd(p,N)=1 }} \sum_{\substack{q \geq 1 \\ q = p (mod \ {N})}} \int_{\R^{m+n}} \chi(p a^s x) \chi(q x)  \, dx \right).
        \end{align*}
We note that the sum  $$ \Xi:= \sum_{s= -\infty}^{\infty} \chi \circ a^s$$ is equal to the characteristic function of the set $$\{(\bar{x}, \bar{y}) \in \R^{m+n}: \|\bar{y}\| >0 , |Nx_i| < \vartheta_i\|N\bar{y}\|^{-w_i}, i=1, \ldots,m\},$$ and
        \begin{align*}
            &\int_{\R^{m+n}} \Xi(px) \chi(qx) \, dx = \int_{1/q < \|N\bar{y}\| < e/q} \left( \prod_{i=1}^m 2 \vartheta_i \max(Np, Nq)^{-1-w_i} \|\bar{y}\|^{-w_i}\right) d\bar{y} \\
            &= \frac{2^m}{N^{m+n}} \left( \prod_{i=1}^{m} \vartheta_i\right) \max(p, q)^{-m-n} \int_{1/Nq < \|\bar{y}\| < e/Nq} \|\bar{y}\|^{-w_i} d\bar{y} \\
            &=\frac{2^m}{N^{m+n}} \left( \prod_{i=1}^{m} \vartheta_i\right) \max(p, q)^{-m-n} \int_{S^{n-1}} \|\bar{z}\|^{-n} \left( \int_{ N^{-1}q^{-1} \|\bar{z}\|^{-1}}^{e N^{-1}q^{-1} \|\bar{z}\|^{-1}} r^{-1} \, dr  \right) d\bar{z} \\
            &= \frac{2^m}{N^{m+n}} \left( \prod_{i=1}^{m} \vartheta_i\right) \max(p, q)^{-m-n} \omega_n,
        \end{align*}
        where $\omega_n = \int_{S^{n-1}}  \, d\bar{z}$. Let $S_N=\{ i \in \Z : 0 \leq i < N, \gcd(i,N)= 1\}$. We also see that
        \begin{align*}
           \sum_{\substack{p \geq 1 \\ gcd(p,N)=1 }} \sum_{\substack{q \geq 1 \\ p = q (mod \ {N})}}  \max(p, q)^{-m-n}  &= \sum_{r \in S_N}\left( \sum_{p,q \geq 1} \max(Np+r, Nq+r)^{-m-n} \right) \\
           &= \sum_{r \in S_N}\left( \sum_{p\geq 1} (Np+r)^{-m-n} + 2 \sum_{1 \leq p <q} (Nq+r)^{-m-n} \right) \\
           &= \zeta_N(m+n) + 2 \sum_{r \in S_N} \sum_{q \geq 1} \frac{q-1}{(Nq+r)^{m+n}}
        \end{align*}
        where $\varphi$ denotes the Euler's Totient function. Thus 
        \begin{equation*}
            \sigma^2 = \sum_{s= - \infty}^{\infty} \Theta_{\infty}(s)= \frac{2^{m+1}}{N^{n+m}}\left( \prod_{i=1}^{m} \vartheta_i\right) \omega_n \left(  1 + \frac{2}{\zeta_N(m+n)} \sum_{r \in S_N} \sum_{q \geq 1} \frac{q-1}{(Nq+r)^{m+n}}\right).
        \end{equation*}
This finishes the proof of Theorem \ref{CLT3'}.
\end{proof}

\begin{proof}[\bf Proof of Theorem \ref{CLT3} ]
    The only step that require change is Lemma 6.3 of \cite{BG}. We need to compute 
    \begin{equation*}
       \sum_{s=0}^{M-1} \int_{\Y} \hat{\chi}(a^sy) \, d\mu_{\Y}(y)  = \int_{\Y} \hat{\Xi}_M (y) \, d\mu_{\Y}(y),
    \end{equation*}
    where $\Xi_M$ denotes the characteristic function of the set $$\{(\bar{x}, \bar{y}) \in \R^{m+n} : 1 \leq \|N\bar{y}\| < e^M, |Nx_i| < \vartheta_i \|N \bar{y}\|^{-w_i}, i=1, \ldots,m \}.$$ 
    Clearly, we have 
    \begin{align*}
        \int_{\Y} \hat{\Xi}_M(y) \, d\mu_{\Y}(y) &= \frac{1}{N^{nm}}\sum_{1 \leq \|N\bar{q}+ v''\| \leq e^M} \sum_{\bar{p} \in \Z^m} \prod_{i=1}^m \int_{[0,N]^n} \chi_{\frac{\vartheta_i}{\|N \bar{q} + v''\|^{w_i}}}'(Np_i + v_{m+i} + \langle \bar{u}_i, N\bar{q} + v''\rangle) \, d\bar{u}_i \\
        &= \frac{1}{N^{nm}}\sum_{1 \leq \|N\bar{q}+ v''\| \leq e^M}  \prod_{i=1}^m  \left( \sum_{p_i \in \Z}\int_{[0,N]^n} \chi_{\frac{\vartheta_i}{\|N \bar{q} + v''\|^{w_i}}}'(Np_i + v_{m+i} + \langle \bar{u}_i, N\bar{q} + v''\rangle) \, d\bar{u}_i \right).  
    \end{align*}
    We claim that 
    \begin{equation}
    \label{value}
         \sum_{p_i \in \Z}\int_{[0,N]^n} \chi_{\frac{\vartheta_i}{\|N \bar{q} + v''\|^{w_i}}}'(Np_i + v_{m+i} + \langle \bar{u}_i, N\bar{q} + v''\rangle) \, d\bar{u}_i = 2 N^{n-1} \vartheta_i \|N \bar{q} + v''\|^{-w_i}.
    \end{equation}
    To prove this, let us consider more generally a bounded measurable function $h$ on $\R$ with compact support, the function $\psi(x)= h(x_1)$ on $\R^n$, and the function $\Tilde{\psi} (x)= \sum_{p \in \Z}\psi(Np +x_1)$ on the torus $\R^n/ N\Z^n$. We suppose without loss of generality that $Nq_1+ v_{m+1} \neq 0$ and consider a non-degenerate linear map $$ S: \R^n \rightarrow \R^n : \bar{u} \mapsto (\langle \bar{u}, N \bar{q}+ v''\rangle + v_{m+1}, u_2, \ldots, u_n )$$ which induced a linear epimorphism of the torus $\R^n/ N\Z^n$. Using that $S$ preserves the Lebesgue probability measure $\mu$ on $\R^n/ N\Z^n$, we deduce that
    \begin{align*}
        \sum_{p \in \Z}\int_{[0,N]^n} h(Np + v_{m+1} + \langle \bar{u}, N\bar{q} + v''\rangle) \, d\bar{u} &= \int_{\R^n/ N\Z^n} \Tilde{\psi}(Sx) \, d\bar{u} \\
        &= \int_{\R^n/ N\Z^n} \Tilde{\psi}(x) \, d\bar{u} = N^{n-1} \int_{\R} h(u_1) \, du_1,
    \end{align*}
    which yields \eqref{value}.
    In turns, \eqref{value} implies that
    \begin{equation*}
        \int_{\Y} \hat{\Xi}_M \, d\mu_{\Y} = \frac{2^m}{N^{m+n}} \left( \prod_{i=1}^m \vartheta_i \right) \sum_{1/N \leq \|\bar{q} + v''/N\| < e^M/N} \|\bar{q} + v''/N\|^{-n}.
    \end{equation*}
    Using that $\|\bar{y_2}\|^{-n}= \|y_1\|^{-n} + \mathcal{O}(\|\bar{y}\|^{-n-1})$ when $\|\bar{y_1} - \bar{y_2}\| \ll 1$, we deduce that $$\underset{1/N \leq \|\bar{q} + v''/N\| \leq e^M/N}{ \sum} \|\bar{q} + v''/N\|^{-n} = \int_{1/N \leq \|\bar{y}\| \leq e^M/N}   \|\bar{y} \|^{-n} \, d\bar{y} +\mathcal{O}(1),$$ and expressing the integral in polar co-ordinates, we obtain $$\int_{1/N \leq \|\bar{y} \| \leq e^M/N}   \|\bar{y} \|^{-n} \, d\bar{y} = \int_{S^{n-1}} \int_{\|z\|^{-1}/N}^{\|z\|^{-1}e^M/N}  \|r \bar{z}\|^{-n} r^{n-1} \, dr \, d\bar{z}= \omega_nM +\mathcal{O}(1).$$ This proves the corresponding lemma.
    Rest steps are same.
\end{proof}

\bibliography{Biblio}

\end{document}